\newtheorem{teo}{Theorem}[section]
\newtheorem{pro}[teo]{Proposition}
\newtheorem{lem}[teo]{Lemma}
\newtheorem{cor}[teo]{Corollary}
\newtheorem{con}[teo]{Conjecture}
\theoremstyle{definition}
\newtheorem{que}[teo]{Question}
\theoremstyle{remark}
\newtheorem{rem}[teo]{Remark}
\numberwithin{equation}{section}
\begin{document}
\bibliographystyle{amsplain}

\title[Singularities of theta divisors]{Singularities of theta divisors in algebraic geometry}
\author[Casalaina-Martin]{Sebastian Casalaina-Martin }
\thanks{The author was partially supported by NSF MSPRF grant DMS-0503228}
\address{Sebastian Casalaina-Martin, Harvard University, Department of Mathematics,  One Oxford Street, Cambridge MA 02138, USA}
\email{casa@math.harvard.edu}

\subjclass{14K25, 14H42, 14H40}

\dedicatory{Dedicated to Roy Smith}

\date{\today}
\begin{abstract} 
The singularities of
theta divisors have played an important role in the study of algebraic varieties.
These notes survey some of the recent
progress in this subject, using as motivation some well known results, especially those for Jacobians.
\end{abstract}
\maketitle

\section*{Introduction}

The geometric properties of theta divisors have been studied extensively since Riemann originally investigated the vanishing  of theta functions on Jacobians of curves.   Singularities of the theta divisor have proven to be useful for investigating  a number of topics including the injectivity of period maps (e.g. Green \cite{green}, Debarre \cite{dtorelli} and Mumford \cite{mprym}), the characterization of special principally polarized abelian varieties (e.g. Andreotti-Mayer \cite{am} and  Debarre \cite{dprym}), the Kodaira dimension of the moduli space of principally polarized abelian varieties (Mumford \cite{mka}), as well as the irrationality of certain threefolds (e.g. Clemens-Griffiths \cite{cg} and Beauville \cite{bint}).  
The best understood theta divisors are those of Jacobians, where the singularities have been studied extensively, and have been used to give at least partial solutions to these types of problems.  Many of  the results for Jacobians have analogues for other principally polarized abelian varieties (ppavs) such as Prym varieties, and intermediate Jacobians, though the results in these cases are generally less complete.  

There are other geometric properties of theta divisors which have proven to be extremely useful as well:  Gauss maps of theta divisors (e.g. \cite{andreo}), intersections of translates of theta divisors (e.g. \cite{wtri} and \cite{ppr1,ppc}), and  subvarieties of ppavs with minimal class (e.g. \cite{mat} and \cite{ran}), to name a few.  Despite the richness of these notions, in order to keep the notes relatively concise we have chosen to focus on results that are directly related to singularities.  The notes are organized as follows.   After recalling the main results for Jacobians, we survey some of the recent progress in extending these to Prym varieties and intermediate Jacobians of cubic threefolds,  as well as more generally to  those ppavs with a singular theta divisor.  The last section briefly discusses singularities of  generalized theta divisors on moduli spaces of vector bundles over curves.  Although not technically  falling under the category of theta divisors on ppavs, the statements are similar in nature to those for Jacobians, and it seemed appropriate to consider them here.

\subsection{Notation and conventions}  We will work over $\mathbb C$.  We denote by $\mathcal A_g$ the moduli space of principally polarized abelian varieties (ppavs) of dimension $g$,   by $\mathscr M_g$ the  moduli space of smooth curves of genus $g$, and by $\mathcal R_g$ the moduli space of connected \'etale double covers of smooth curves of genus $g$.  Let $\mathscr J:\mathscr M_g \to \mathcal A_g$ be the Torelli map taking a curve to its Jacobian, and denote by $J_g$ the image, and $\bar{J}_g$ the closure of $J_g$.  Let $\mathscr P:\mathcal R_{g+1}\to \mathcal A_g$ be the Prym map taking a double cover to its associated Prym variety, and denote by $P_g$ the image, and  $\bar{P}_g$ the closure.

\section{Jacobians}
The  most well known and best understood abelian varieties are Jacobians.  Recall that given a smooth curve $C$ of genus $g$, we define the Jacobian $JC:=H^0(C,\omega_C)^\vee/H_1(C,\mathbb Z)$.  There is a canonical principal polarization $\Theta$ induced by the intersection pairing on homology.  In this section we recall some of the basic facts about the singularities of the theta divisor in order to motivate the discussion in what follows.

\subsection{The Riemann Singularity Theorem, and Kempf's generalization} \ \ \ 
 The Abel-Jacobi theorem gives a canonical  isomorphism $JC\cong \operatorname{\operatorname{Pic}}^0(C)$, which induces an isomorphism $JC\cong \operatorname{\operatorname{Pic}}^{g-1}(C)$.  Under this identification, Riemann's theorem states that $\Theta$ is a translate of 
$$W_{g-1}:=\{L\in \operatorname{\operatorname{Pic}}^{g-1}(C):h^0(L)>0\},$$  and moreover, the singularities of the theta divisor correspond to line bundles with at least two linearly independent global sections.  More precisely, the Riemann Singularity Theorem (RST) states that
after translation, so that $\Theta$ is identified with $W_{g-1}$,  $\operatorname{mult}_L\Theta=h^0(L)$.
Recall that for a divisor $D$ on a smooth variety $X$ defined locally near a point $x\in X$ by the function $f$, the multiplicity of $D$ at $x$, denoted  $\operatorname{mult}_x D$ is the order of vanishing of $f$ at $x$; equivalently, the multiplicity is the degree of the tangent cone $C_xD$ in the projectivization of the tangent space $\mathbb PT_xX$.

More detailed information about the singularity is given by Kempf's generalization, which we only state in the special case of the theta divisor.  For more details, we refer the reader to Kempf \cite{k} and Arbarello et al. \cite[Chapter VI]{acgh}.  To state the result, we introduce some notation.  To begin, given $L\in \operatorname{\operatorname{Pic}}^{g-1}(C)$, we may identify the tangent space to $\operatorname{\operatorname{Pic}}^{g-1}(C)$ at $L$ with $H^0(K_C)^\vee$.  There is the Petri map
$$
\mu:H^0(L)\otimes H^0(K_C\otimes L^\vee)\to H^0(K_C),
$$
and given bases $x_1,\ldots,x_{h^0(L)}$ and
$y_1,\ldots,y_{h^0(L)}$ of $H^0(L)$ and $H^0(K_C\otimes L^\vee)$ respectively, set $x_iy_j=\mu(x_i\otimes y_j)$.

\begin{teo}[Kempf \cite{k}]  \ \ \ \ \ 
The projective tangent cone $C_L\Theta$ is a Cohen-Macaulay, reduced, normal hypersurface in $\mathbb PH^0(K_C)^\vee$ of degree $h^0(L)$, whose ideal is generated by the determinant of the matrix $(x_iy_j)$ and is supported on 
$$
\bigcup_{D\in |L|} \overline{\phi_K(D)},
$$
where $\phi_K$ is the canonical map of $C$, and $\overline{\phi_K(D)}$ is the span of the points of $D$ in $\mathbb PH^0(K_C)^\vee$.

\end{teo}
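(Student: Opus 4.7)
The plan is to present $\Theta$ locally near $L$ as a determinantal subscheme and read off the tangent cone structure from the linearization of the defining matrix. Let $r+1=h^0(L)$ and let $\mathcal P$ be a Poincar\'e bundle on $C\times \operatorname{Pic}^{g-1}(C)$ with projection $\pi$ to the second factor. Twisting $\mathcal P$ by the pullback of a sufficiently positive effective divisor on $C$ and pushing forward via $\pi$, one obtains a two-term complex $\alpha\colon A\to B$ of locally free sheaves of equal rank $n$ on a neighborhood of $L$ whose cohomology computes $R^\bullet\pi_*\mathcal P$ after an appropriate twist. By cohomology-and-base-change, at the point $L$ the map $\alpha|_L$ has $\ker\alpha|_L\cong H^0(L)$ and $\operatorname{coker}\alpha|_L\cong H^1(L)\cong H^0(K_C\otimes L^\vee)^\vee$, so $\alpha$ has corank $r+1$ at $L$, and near $L$ the theta divisor is the degeneracy locus $\{\det\alpha=0\}$.

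Next I would extract the tangent cone. Choose a splitting so that $\alpha$ is block-decomposed with a $(n-r-1)\times(n-r-1)$ block $\alpha_{11}$ invertible at $L$ and a $(r+1)\times(r+1)$ block $\alpha_{22}$ vanishing at $L$. The Schur complement identity gives
$$\det\alpha=\det(\alpha_{11})\cdot\det\bigl(\alpha_{22}-\alpha_{21}\alpha_{11}^{-1}\alpha_{12}\bigr),$$
and since $\det(\alpha_{11})$ is a unit at $L$, the initial form of $\det\alpha$ at $L$ equals the determinant of an $(r+1)\times(r+1)$ matrix $M$ of linear forms on the tangent space $T_L\operatorname{Pic}^{g-1}(C)\cong H^0(K_C)^\vee$. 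Identifying the rows and columns of $M$ with the chosen bases $x_1,\dots,x_{r+1}$ of $H^0(L)$ and $y_1,\dots,y_{r+1}$ of $H^0(K_C\otimes L^\vee)$, I would verify that the $(i,j)$-entry of $M$ is exactly the Petri product $\mu(x_i\otimes y_j)=x_iy_j$; this amounts to computing the derivative of $\alpha$ in the Picard direction via the Kodaira--Spencer/cup-product pairing and matching it with the multiplication map on sections. This identification of $M$ with the Petri matrix is the main technical step, and the place where I expect the bookkeeping to demand the most care.

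With $C_L\Theta=V(\det(x_iy_j))\subset\mathbb P H^0(K_C)^\vee$ established, the structural properties follow from the general theory of determinantal varieties. The ideal of the maximal minor of a generic $(r+1)\times(r+1)$ matrix of indeterminates is principal, prime, and Cohen--Macaulay by Hochster--Eagon; the ideal of $r\times r$ minors has codimension $4$ (Eagon--Northcott), so it cuts out the singular locus of the hypersurface in codimension $\geq 2$. Provided one shows that the specialized matrix $(x_iy_j)$ has expected codimension in each stratum—a genericity statement that reduces, via the base-point-free pencil trick, to the base-point-freeness of $|K_C|$ together with geometric Riemann--Roch—the same properties (Cohen--Macaulay, reduced, regular in codimension $1$, hence normal by Serre's criterion) transfer to $C_L\Theta$, and the degree is $r+1=h^0(L)$ by inspection.

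Finally, for the support statement, a point $[\xi]\in\mathbb P H^0(K_C)^\vee$ lies in $V(\det M)$ iff the matrix $(x_iy_j(\xi))$ has nontrivial kernel, iff there exists a nonzero $s=\sum a_ix_i\in H^0(L)$ with $(s\cdot y_j)(\xi)=0$ for all $j$. Writing $D=(s)_0\in|L|$, multiplication by $s$ identifies $H^0(K_C\otimes L^\vee)$ with $H^0(K_C(-D))$, so the condition becomes $[\xi]\in\bigcap_{\omega\in H^0(K_C(-D))}\{\omega=0\}=\overline{\phi_K(D)}$. Taking the union over $s\in H^0(L)\setminus\{0\}$, equivalently over $D\in|L|$, gives the asserted description of the support.
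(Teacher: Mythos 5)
Your outline is correct in its main steps, but it follows a genuinely different route from the one the paper sketches, which is Kempf's original argument: resolve $\Theta$ by the Abel map $C_{g-1}\to\operatorname{Pic}^{g-1}(C)$, check that the fibre over $L$ is a smooth $\mathbb P^{h^0(L)-1}$ inside $C_{g-1}$, and identify the tangent cone with the pushforward of the normal bundle of that fibre. Having an explicit resolution of the tangent cone itself by a projective bundle over $|L|$ is what makes the support description, the degree, reducedness, normality, and rationality of the singularities come out almost for free. You instead present $\Theta$ locally as the degeneracy locus of a square matrix $\alpha$ built from a Poincar\'e bundle and read off the initial form of $\det\alpha$ by a Schur-complement computation, identifying the linearization with the Petri matrix via cup product; that identification is correct, and this is exactly the mechanism that Mumford and Laszlo exploit for Pryms and for generalized theta divisors, so your route is the one that transports to the later sections of these notes. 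Its cost is that the qualitative statements (reduced, normal) must be extracted from the corank stratification of $(x_iy_j)$ rather than from a resolution; note that Cohen--Macaulayness, at least, is automatic for any hypersurface.

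One concrete warning about that last step: the genericity you propose to verify --- that $(x_iy_j)$ drops rank in the expected codimension on every stratum --- is false in general. Already for $h^0(L)=2$ the corank-two locus is the common zero locus of the four entries, a linear space of codimension $4-h^0(K_C\otimes L^{\otimes -2})$ by the base-point-free pencil trick; when $L$ is a theta characteristic this is $3$, not $4$, which is precisely why the quadric tangent cone has rank three in that case (see the Remark following the theorem). So you should aim only for the weaker, and true, estimate that the corank $\ge 2$ locus has codimension at least three in $\mathbb PH^0(K_C)^\vee$ (enough for $R_1$, and $S_2$ is automatic, giving normality by Serre), and obtain reducedness separately, e.g.\ from the $1$-genericity of the Petri matrix --- the fact that $st\ne 0$ in $H^0(K_C)$ for nonzero $s\in H^0(L)$ and $t\in H^0(K_C\otimes L^\vee)$ --- or by comparing the degree $h^0(L)$ of $\det(x_iy_j)$ with the degree of its irreducible support.
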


The basic idea of the proof is as follows:  setting $C_{g-1}$ to be the symmetric product of the curve, one considers the Abel map
$C_{g-1}\to \operatorname{\operatorname{Pic}}^{g-1}(C)$
which gives a desingularization of the theta divisor.  By Abel's theorem, the fibers are projective spaces, which one checks are in fact smooth as subschemes of $C_{g-1}$.  A careful study of the normal bundle to the fiber over $L$ yields the result.  The key point is a result of Kempf's \cite{k} which states that in this situation  the pushforward of the normal bundle is the tangent cone to $\Theta$ at $L$.  
We also point out that  from this description it follows that $\Theta$ has rational singularities.

\begin{rem}
An immediate consequence of Kempf's theorem is a bound on the rank of the quadric tangent cones to the theta divisor.  Specifically, if $L\in \operatorname{Pic}^{g-1}(C)$ is such that 
$h^0(L)=2$, then Kempf's theorem states that the tangent cone to $\Theta$ at $L$ is 
the determinant of a two by two matrix with linear entries.  Thus the rank of the quadric is at most four, and one can check that it is equal to three exactly when $L$ is a theta characteristic.
\end{rem}

\subsection{Secants to the canonical curve, tangent cones to the theta divisor, and the constructive Torelli theorem}

As in the previous section, let $\phi_K:C\to \mathbb PH^0(K_C)^\vee$ be the canonical map.  Since $C_L\Theta$ sits naturally in $\mathbb PH^0(K_C)^\vee$, the canonical curve, the secant varieties to the canonical curve, and the tangent cones to the theta divisor lie in the same projective space.  
We will use the convention that the $k$-secant variety to a variety $X\subseteq \mathbb P^n$ is the closure of the variety of $k$-dimensional planes meeting $X$ in $k+1$ points.
A proof of the following well known statement is given in Arbarello et al. \cite[Theorem VI 1.6 (i)]{acgh}.
Although the proof is elementary, we give an alternate proof here which is short, and generalizes well to other situations (cf. \cite{casa}):

\begin{pro}
The $k$-secant variety to the canonical curve is contained in $C_L\Theta$ if and only if $h^0(L)\ge k+2$.
     \end{pro}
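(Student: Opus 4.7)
The plan is to exploit Kempf's explicit determinantal description of the tangent cone: since the ideal of $C_L\Theta$ is generated by $\det(x_iy_j)$, showing that a subvariety lies in $C_L\Theta$ reduces to checking that this single polynomial vanishes on it. The key observation is that for $p \in C$, the canonical point $\phi_K(p) \in \mathbb{P} H^0(K_C)^\vee$ is, up to scalar, the linear functional on $H^0(K_C)$ given by evaluation at $p$; consequently, evaluating the matrix $(x_iy_j)$ at $\phi_K(p)$ produces the rank-one outer product $(x_i(p)y_j(p))_{i,j}$.

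For the ``if'' direction, I take a general point of a $k$-secant plane, write it as $v = \sum_{i=1}^{k+1} c_i \phi_K(p_i)$, and observe that the matrix $(x_iy_j)$ evaluated at $v$ is $\sum_{i=1}^{k+1} c_i (x_\alpha(p_i) y_\beta(p_i))_{\alpha,\beta}$, a sum of $k+1$ rank-one matrices, hence of rank at most $k+1$. If $h^0(L) \geq k+2$, this $h^0(L) \times h^0(L)$ matrix is rank-deficient, so its determinant vanishes and $v \in C_L\Theta$; taking closures, the full $k$-secant variety lies in $C_L\Theta$.

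The converse is the step requiring the most care, since I must rule out that $\det(x_iy_j)$ vanishes identically on a $k$-plane even when the pointwise rank bound allows nonvanishing. Assume $h^0(L) = r+1 \leq k+1$ and pick $k+1$ general points $p_1, \ldots, p_{k+1}$ of $C$. Writing the matrix at $v = \sum c_i \phi_K(p_i)$ as $U D V^T$, where $U$ and $V$ are the $(r+1) \times (k+1)$ matrices of values of the $x_\alpha$ and $y_\beta$ at the $p_i$'s and $D = \mathrm{diag}(c_1, \ldots, c_{k+1})$, the Cauchy--Binet formula gives
$$
\det\bigl((x_iy_j)(v)\bigr) = \sum_{\substack{S \subseteq \{1,\ldots,k+1\} \\ |S| = r+1}} \det(U_{*,S}) \det(V_{*,S}) \prod_{i \in S} c_i.
$$
Since $\dim H^0(L) = \dim H^0(K_C \otimes L^\vee) = r+1$ by Serre duality and Riemann--Roch (as $\deg L = g-1$), general $(r+1)$-tuples of points impose independent conditions on both linear systems, so each minor $\det(U_{*,S})$ and $\det(V_{*,S})$ is nonzero. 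Hence the right-hand side is a nonzero polynomial in the $c_i$ and the spanning $k$-plane is not contained in $C_L\Theta$.
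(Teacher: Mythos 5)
Your proof is correct, but it takes a genuinely different route from the one in the paper. You start from Kempf's determinantal equation for $C_L\Theta$ and reduce both directions to linear algebra: evaluated at $\phi_K(p)$ the Petri matrix is the rank-one outer product $\bigl(x_i(p)y_j(p)\bigr)$, so at a point of a secant plane it is a sum of $k+1$ rank-one matrices, and Cauchy--Binet together with the fact that $r+1$ general points impose independent conditions on both $|L|$ and $|K_C\otimes L^\vee|$ (the two spaces have equal dimension since $\deg L=g-1$) shows the determinant restricts to a nonzero polynomial on a general secant plane when $h^0(L)\le k+1$. The paper instead uses the intrinsic cohomological criterion that $[\alpha]\in C_L\Theta$ if and only if $\cup\alpha:H^0(L)\to H^1(L)$ has nontrivial kernel, identifies $\phi_K(p)$ with a coboundary class, and factors $\cup\delta(t)$ through evaluation at $p$; it carries this out only for $k=0$ and asserts the general case is similar. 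The two arguments share the same kernel of an idea (the rank-one factorization through $H^0(L(p)|_p)$ is exactly your outer-product observation), but the trade-offs differ: your argument is self-contained and uniform in $k$, making the ``only if'' direction fully explicit where the paper only sketches it, whereas it leans on Kempf's theorem, which is specific to Jacobians. The paper deliberately avoids the determinantal description precisely so that the method transfers to Prym theta divisors (cf.\ the analogous secant statement for $\phi_\eta(C)$ in $C_L\Xi$), where no such clean global determinant is available; your approach would not generalize in that direction. Your proof is a valid and arguably more complete alternative for the Jacobian case.
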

\begin{proof} 
We will use the basic fact that
for nonzero $\alpha\in H^1(\mathscr O_C)$, 
$[\alpha]\subseteq C_L\Theta$ if and only if
 the cup product map
$\cup \alpha :H^0(L)\to H^1(L)$ has a nontrivial kernel (see for example Arbarello et al. \cite{acgh} or \cite{cmf}).  Now let $p\in C$, and consider the exact sequence
$$
0\to \mathscr O_C \to \mathscr O_C(p)\to \mathscr O_p(p) \to 0.
$$
This induces a coboundary map $\delta:H^0(\mathscr O_p(p))\to H^1(\mathscr O_C)$, and after identifying
$H^1(\mathscr O_C)=H^0(K_C)^\vee$, one can check that for nonzero $t\in H^0(\mathscr O_p(p))$, 
$[\delta(t)]=\phi_K(p)\in \mathbb PH^0(K_C)^\vee$.
On the other hand, after tensoring the above exact sequence by $L$, there is an induced coboundary map
$\partial:H^0(L(p)|_p)\to H^1(L)$, and a local computation shows that  $\cup \delta(t):H^0(L)\to H^1(L)$ is given by the composition 
$$
H^0(L)\stackrel{\cup t}{\to}H^0(L(p)|_p)\stackrel{
\partial}{\to}H^1(L).
$$
For a general point $p\in C$, $h^0(L)=h^0(L(p))$, and so $\partial$ is injective.  Therefore, a general point $p\in \phi_K(C)$ is contained in $C_L\Theta$ if and only if $h^0(L(-p))\ne 0$.  This is the case if and only if $h^0(L)>1$.  In other words, the canonical model of the curve is contained in the tangent cone to $\Theta$ at $L$ if and only if $h^0(L)\ge 2$.  The proof of the general case is similar.
\end{proof}

In particular, the canonical curve is contained in every quadric tangent cone; note that this shows that the rank of such a quadric must be at least three, since the canonical model is non-degenerate.    Recall that Petri's analysis of the canonical ideal shows that it is generated by  quadrics, so long as $g(C)\ge 5$, and $C$ is neither trigonal, nor a plane quintic.  
A theorem of Green's  shows that in fact the degree two part of the canonical ideal is generated by the quadric tangent cones to the theta divisor.  To be precise, let $I_{2,K}$ be the ideal generated by the quadrics containing the canonical model of $C$,  and let $I_2(\Theta)$ be the ideal generated by the quadric tangent cones to the theta divisor.

\begin{teo}[Green \cite{green}]
For a non-hyperelliptic curve $C$ of genus $g(C)\ge 4$, $I_2(\Theta)=I_{2,K}$.
\end{teo}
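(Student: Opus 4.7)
The inclusion $I_2(\Theta) \subseteq I_{2,K}$ is immediate from the preceding proposition: every quadric tangent cone $C_L\Theta$ (at a pencil $L \in \operatorname{Pic}^{g-1}(C)$ with $h^0(L)=2$) contains the canonical curve, and hence lies in $I_{2,K}$.

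For the reverse inclusion, the plan is to recast both sides algebraically and then reduce to a span statement about rank-$\leq 4$ quadrics. By Max Noether's theorem (using non-hyperelliptic with $g \geq 3$), the kernel of the multiplication map $\operatorname{Sym}^2 H^0(K_C) \to H^0(K_C^2)$ is exactly $I_{2,K}$; by Kempf's theorem, for a pencil $L \in W^1_{g-1}(C)$ with bases $x_0,x_1$ of $H^0(L)$ and $y_0,y_1$ of $H^0(K_C \otimes L^\vee)$, the tangent cone $C_L\Theta$ corresponds to
\begin{equation*}
Q_L = (x_0 y_0)(x_1 y_1) - (x_0 y_1)(x_1 y_0) \in \operatorname{Sym}^2 H^0(K_C),
\end{equation*}
which lies in this kernel because the two summands agree after commuting the product in $H^0(K_C^2)$. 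Thus the theorem reduces to showing that the $Q_L$, as $L$ ranges over $W^1_{g-1}(C)$, span all of $I_{2,K}$. To prove this, I would apply Petri's base-point-free pencil trick: for each pencil $L$ the kernel of $\mu_L$ is isomorphic to $H^0(K_C \otimes L^{-2})$, exhibiting $Q_L$ as the generic rank-$\leq 4$ determinantal quadric associated to $L$. Petri's syzygy analysis of the canonical ring then shows these $Q_L$ generate $I_{2,K}$ for any Petri-generic curve (non-hyperelliptic, non-trigonal, non-plane-quintic, $g\geq 4$), while in the two exceptional cases one uses the distinguished surface on which $\phi_K(C)$ lies—a rational normal scroll for trigonal curves, a Veronese surface for plane quintics ($g=6$)—each of which is cut out by rank-$\leq 4$ quadrics matchable with the $Q_L$.

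The main obstacle is this final generation step. A naive dimension count is inconclusive: by Brill--Noether $W^1_{g-1}(C)$ has dimension $g-4$, while $I_{2,K}$ is $\binom{g-2}{2}$-dimensional, so the image of $L \mapsto Q_L$ has comparatively small dimension but its linear span could a priori be anything in between. Verifying that no nonzero functional on $I_{2,K}$ annihilates every $Q_L$—which is the real content of Green's theorem—requires Petri's explicit syzygy computations in the generic case, and the scroll/Veronese geometry supplemented by a direct analysis in the two exceptional cases.
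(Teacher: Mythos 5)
This is a result the survey only cites (Green, \emph{Quadrics of rank four in the ideal of a canonical curve}); the paper contains no proof of it, so your proposal can only be judged on its own terms. Your easy inclusion $I_2(\Theta)\subseteq I_{2,K}$ is fine and does follow from the preceding proposition, your identification of $I_{2,K}$ with $\ker\bigl(\operatorname{Sym}^2H^0(K_C)\to H^0(2K_C)\bigr)$ via Max Noether is correct, as are the dimension counts ($\dim W^1_{g-1}=g-4$, $\dim I_{2,K}=\binom{g-2}{2}$), and the reduction to the statement that the quadrics $Q_L$ span $I_{2,K}$ is the right reduction.

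The gap is exactly where you locate it, but your proposed fix does not work: Petri's analysis does not prove the spanning statement. What Petri's syzygy computation shows is that $I_{2,K}$ \emph{generates the full canonical ideal} for a non-trigonal, non-plane-quintic curve of genus $\ge 5$; it says nothing about whether the particular rank-$\le 4$ quadrics arising as tangent cones to $\operatorname{Sing}\Theta$ span the degree-two piece $I_{2,K}$. Petri's explicit generators are quadrics $\omega_{ij}$ built from a general effective divisor of degree $g$ and are not presented as tangent cones, so "Petri's explicit syzygy computations" cannot be invoked to kill a functional annihilating every $Q_L$. That spanning statement is precisely the content of Green's theorem, and his proof is a separate, substantive argument (an induction on the genus using quadrics of rank $\le 4$ through points of the curve, together with a careful treatment of the base cases and of trigonal curves and plane quintics), not a corollary of Petri plus Kempf. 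As written, your argument establishes one inclusion and reformulates the other, but the reverse inclusion --- the actual theorem --- remains unproved.
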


In other words, a curve of genus at least five, which is neither trigonal nor a plane quintic, is cut out scheme theoretically by the (rank four) tangent cones to the double points of the theta divisor of its Jacobian.  In particular, it can be recovered from its Jacobian.
 This is one version of what is known as the constructive Torelli theorem.  The fact that all curves are uniquely determined by their Jacobian is due to Torelli.  A complete  constructive proof was given by Andreotti \cite{andreo} via the branch locus of the Gauss map of the theta divisor.  

\begin{teo}[Torelli] If $(JC,\Theta_C)\cong (JC',\Theta_{C'})$, then $C\cong C'$. In other words the period map $\mathscr M_g\to \mathcal  A_g$ taking a curve to its Jacobian, is injective.
\end{teo}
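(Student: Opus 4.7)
The plan is to reconstruct the canonical image of $C$ in $\mathbb{P}H^0(K_C)^\vee \cong \mathbb{P}T_0 JC$ from data purely intrinsic to $(JC,\Theta_C)$. Since a non-hyperelliptic curve is determined by its canonical image, and since the exceptional cases (hyperelliptic, trigonal, plane quintic, low genus) admit separate treatments, this recovers $C$ up to isomorphism.

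For the principal case -- $C$ non-hyperelliptic of genus $g\ge 4$, neither trigonal nor a smooth plane quintic -- I would proceed as follows. After translating so that $\Theta=W_{g-1}$, the Riemann Singularity Theorem identifies $\operatorname{Sing}\Theta=W^1_{g-1}=\{L:h^0(L)\ge 2\}$, which is nonempty in this range. A general singular point $L$ has $h^0(L)=2$, and the Remark following Kempf's theorem exhibits the projectivized tangent cone $C_L\Theta\subset \mathbb{P}H^0(K_C)^\vee$ as a quadric of rank $\le 4$. As $L$ varies over $\operatorname{Sing}\Theta$, Green's theorem identifies the ideal $I_2(\Theta)$ generated by these quadric tangent cones with the ideal $I_{2,K}$ of quadrics vanishing on $\phi_K(C)$; by Petri's theorem, in this range the canonical ideal is generated by $I_{2,K}$, and so the scheme-theoretic intersection of the quadric tangent cones to $\Theta$ equals $\phi_K(C)$. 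Thus $C$ is recovered.

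The remaining cases would require ad hoc arguments. For $g=1,2$, $C\cong JC$ and $C\cong W_1=\Theta$ respectively. For hyperelliptic $C$ with $g\ge 3$, the singular locus admits the description $W^1_{g-1}=g^1_2+W_{g-3}$ and has dimension $g-3$, strictly larger than the non-hyperelliptic value $g-4$; this both detects hyperellipticity intrinsically from $\dim\operatorname{Sing}\Theta$ and permits the recovery of $C$ from the structure of the addition map on $JC$ (the Abel image $W_1$ appears as a fiber-type component). For non-hyperelliptic $g=3$, trigonal curves, and smooth plane quintics, the scheme-theoretic intersection of the quadric tangent cones cuts out respectively the ambient $\mathbb{P}^2$, a cubic scroll, or the Veronese surface rather than $\phi_K(C)$, and one must invoke Andreotti's Gauss-map branch-locus argument, or supplement with higher-order tangent-cone data at the deeper singularities of $\Theta$, in order to extract $C$.

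The principal obstacle is the uniform treatment of these exceptional cases. In the generic range, Green's theorem cleanly reduces Torelli to a statement about quadric tangent cones of $\Theta$, which is immediately available from $(JC,\Theta_C)$; outside this range, either the canonical ideal fails to be generated by quadrics or the canonical map fails to be an embedding, and a genuinely different geometric input -- classically Andreotti's analysis of the branch locus of the Gauss map of $\Theta$ -- is required to complete the proof.
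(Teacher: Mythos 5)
Your proposal follows essentially the same route the paper takes: the paper presents Torelli as a consequence of the constructive statement that, by Green's theorem together with Petri's analysis, a non-hyperelliptic, non-trigonal, non-plane-quintic curve of genus at least five is cut out scheme-theoretically by the rank-four quadric tangent cones at the double points of $\Theta$, and it likewise defers the complete, all-cases proof to Andreotti's analysis of the branch locus of the Gauss map. Your handling of the generic case and your explicit deferral of the exceptional cases (hyperelliptic, low genus, trigonal, plane quintic) match the paper's presentation, so there is nothing substantive to correct.
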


In what follows, we will describe several approaches to the injectivity of period maps via singularities of theta divisors.

\subsection{Martens' theorem, and the geometric Schottky problem}
For a variety $X$, define the $k$-th singular locus
$$
\operatorname{Sing}_k X=\{x\in X:\operatorname{mult}_x X\ge k\}.
$$
From the RST it follows that on a Jacobian
$$
\operatorname{Sing}_k \Theta=W^{k-1}_{g-1}:=\{L\in \operatorname{Pic}^{g-1}(C):h^0(L)\ge k\}.
$$
More generally, we can consider the Brill-Noether loci 
$$
W_d^r=\{L\in \operatorname{Pic}^{d}(C):h^0(L)\ge r+1\}.
$$  
A theorem of Martens' \cite{martens} states that if $C$ is a smooth curve of genus $g\ge 3$, $d$ is an integer such that
$2\le d\le g-1$, and $r$ is an integer such that
$0<2r\le d$, then $ \dim W^r_d\le d-2r$,
and equality holds if and only if $C$ is hyperelliptic.  
It follows that
\begin{equation}\label{eqnmart}
\dim \operatorname{Sing}_k\Theta \le g-2k+1,
\end{equation}
 and equality holds if and only if $C$ is hyperelliptic.  The $W^r_d$ may be viewed as degeneracy loci of maps of vector bundles, and from this perspective, for instance, one can show that $\dim \operatorname{Sing} \Theta\ge g-4$.  Thus
for a Jacobian of dimension at least four,
the dimension of the singular locus of the theta divisor is exactly $g-4$, unless the curve is hyperelliptic, in which case the dimension is $g-3$.

The Schottky problem in the most general sense is to characterize Jacobians among all ppavs of dimension $g$.  The literature on this subject is extensive, going back to Schottky and Jung's original papers \cite{schottky, scj}, and complete solutions have been provided by
Arbarello-de Concini \cite{ad1, ad2}, Mulase \cite{mul}, Shiota \cite{shiota}, and most recently by Krichever \cite{kr}, who proved the celebrated tri-secant conjecture of Welters \cite{wcriterion}.  In these notes
we will focus on Andreotti and Mayer's approach to this problem via the singularities of the theta divisor.
Using the description above as motivation, Andreotti and Mayer defined subloci of $\mathcal  A_g$ 
$$
N_{g,\ell}=\{(A,\Theta)\in \mathcal  A_g: \dim \operatorname{Sing} \Theta\ge \ell\}.
$$
If the space $\mathcal A_g$ is clear from the context, we will write $N_\ell$ for $N_{g,\ell}$.
Andreotti and Mayer proved that $N_0\ne \mathcal  A_g$, as well as the following theorem: 
\begin{teo}[Andreotti-Mayer \cite{am}] For $g\ge 4$, 
$\bar{J}_g$ is an irreducible component of 
$N_{g-4}$. 
\end{teo}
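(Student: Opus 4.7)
The two things to check are: (i) $\bar{J}_g \subseteq N_{g-4}$, and (ii) at a generic point of $J_g$, the local dimension of $N_{g-4}$ does not exceed $\dim J_g = 3g-3$. Together with the irreducibility of $\bar{J}_g$ (as the image of $\mathscr{M}_g$), these give the component statement. Claim (i) is immediate from the Riemann Singularity Theorem: after translation, $\operatorname{Sing}\Theta_C = W^1_{g-1}$, and as the degeneracy locus of a map of rank-$g$ vector bundles on $\operatorname{Pic}^{g-1}(C)$, this locus has every component of dimension at least the expected value $g-4$ (the general Fulton-type lower bound on degeneracy loci, noted in the excerpt).

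For (ii), I would work at a generic non-hyperelliptic curve $C$ and bound the Zariski tangent space:
$$T_{JC} N_{g-4} \subseteq T_{JC} J_g.$$
By infinitesimal Torelli, $J_g$ is smooth of dimension $3g-3$ at $JC$, and its tangent space inside $T_{JC}\mathcal{A}_g \cong \operatorname{Sym}^2 H^0(K_C)^\vee$ is the annihilator $I_{2,K}^\perp$, obtained by dualizing the multiplication map $\operatorname{Sym}^2 H^0(K_C) \to H^0(K_C^{\otimes 2})$ whose kernel is $I_{2,K}$. The key input is the heat equation for the theta function, which identifies the tangent cone $Q_L \in \operatorname{Sym}^2 H^0(K_C)$ at a singular point $L \in W^1_{g-1}$ with the derivative of $\theta$ along the period-matrix directions. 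Consequently, a first-order deformation $\xi \in \operatorname{Sym}^2 H^0(K_C)^\vee$ preserves $L$ as a singular point to first order exactly when the pairing $\langle Q_L, \xi \rangle$ vanishes. A $(g-4)$-dimensional family of singular points persisting along $\xi$ then forces this pairing to vanish on a Zariski-dense subset of $W^1_{g-1}$, which is irreducible of dimension $g-4$ for non-hyperelliptic $C$ by Martens. Green's theorem stated above asserts that these tangent cones span $I_{2,K}$, so $\xi$ annihilates $I_{2,K}$ and hence lies in $T_{JC} J_g$.

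The main obstacle is the bridge between a scheme-theoretic tangent vector $\xi \in T_{JC} N_{g-4}$ and the geometric persistence of singular points: one must argue that $\xi$ imposes the vanishing of $\langle Q_L, \xi \rangle$ at a Zariski-dense subset of $W^1_{g-1}$, not merely at a proper subvariety. This requires a careful application of the heat equation together with a first-order analysis of how $W^1_{g-1}$ deforms with the ppav, and some attention to the possibly non-reduced scheme structure on $N_{g-4}$. Once the orthogonality is established on a dense subset, Green's theorem and the resulting tangent-space bound finish the argument.
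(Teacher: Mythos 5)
The paper itself contains no proof of this statement---it is quoted from Andreotti--Mayer's original article---so I am measuring your proposal against the classical argument, and your strategy is exactly that argument: the Riemann Singularity Theorem plus the degeneracy-locus bound for the containment $\bar{J}_g\subseteq N_{g-4}$, and the heat equation together with the spanning of $I_{2,K}$ by the quadric tangent cones (Green's theorem; Andreotti--Mayer originally used a weaker generic version) for the dimension bound. The logic of part (ii) is sound once the orthogonality is in place: for any component $Y$ of $N_{g-4}$ containing $\bar{J}_g$ one has $\dim Y\le\dim T_{JC}Y_{\mathrm{red}}\le\dim I_{2,K}^{\perp}=3g-3$, forcing $Y=\bar{J}_g$.

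The gap you flag is, however, the entire content of the theorem, and your formulation makes it harder than necessary: a Zariski tangent vector $\xi$ to $Y$ at $JC$ need not integrate to a first-order deformation carrying a $(g-4)$-dimensional family of singular points, precisely because $JC$ is a \emph{special} point of $Y$ whenever $Y\supsetneq\bar{J}_g$, so generic smoothness of the relative singular locus over $Y$ tells you nothing there. The standard repair works at the general point of $Y$ and then specializes: take a component $\mathscr S$ of the relative singular locus of the universal theta divisor dominating $Y$ with fibres of dimension $\ge g-4$ and containing the $W^1_{g-1}$'s over $J_g$; the heat-equation computation shows that at every double point $(A,x)\in\mathscr S$ the image of $T_{(A,x)}\mathscr S$ in $T_A\mathcal A_g$ lies in $Q_x^{\perp}$, so by generic smoothness $T_yY\subseteq\bigcap_x Q_x^{\perp}$ for $y$ general in $Y$, giving $\operatorname{codim}Y\ge\operatorname{rank}$ of the family of quadrics at the general point of $Y$. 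Lower semicontinuity of that rank under specialization to $J_g$, where Green's theorem gives rank $\binom{g-2}{2}$, then yields $\dim Y\le 3g-3$. Two smaller corrections: irreducibility of $W^1_{g-1}$ for the general curve is not Martens (Martens gives only $\dim\le g-4$); it requires Fulton--Lazarsfeld connectedness plus Gieseker--Petri, and it fails for $g=4$, where $W^1_3$ is a finite set and the spanning statement must be checked by hand (it holds because $I_{2,K}$ is one-dimensional there and the two quadric tangent cones coincide by symmetry of $\Theta$).
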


 Although a theorem of Beauville's \cite{bschot} shows that for all $g\ge 4$, $N_{g-4}\ne \bar{J}_g$, this approach to the Schottky problem has motivated a number of efforts to classify other ppavs via the singularities of the theta divisor.
We will also discuss later (see Section \ref{secaml}) a modification of Andreotti and Mayer's approach to the Schottky problem proposed by Ciliberto-van der Geer.

\section{Prym varieties}  Prym varieties are another class of abelian varieties which can be studied using curves.   All of the results for Jacobians outlined above have analogues for Prym varieties, although many of these hold only for generic curves.  There is an ongoing effort to determine exactly under which conditions these generalizations hold.
Recall that given $\pi:\widetilde{C}\to C$ a connected \'etale double cover of a smooth curve $C$ of genus $g$, there is an induced norm map
$$\operatorname{Nm}:\operatorname{Pic}(\widetilde{C})\to \operatorname{Pic}(C),$$  and we define the Prym variety $P:=(\ker \operatorname{Nm})_0\subset J\widetilde{C}$ to be 
the connected component of the kernel of the norm.  The principal polarization $\widetilde{\Theta}$ on $J\widetilde{C}$ restricts to give twice a principal polarization $\Xi$ on $P$.  We call $(P,\Xi)$ the principally polarized Prym variety associated to the double cover $\pi:\widetilde{C}\to C$; it is $g-1$ dimensional.
In what follows we will also want to fix the following notation.
Associated to such a double cover is an involution
$\tau:\widetilde{C}\to \widetilde{C}$ given by exchanging the sheets of the cover, and also a line bundle $\eta\in \operatorname{Pic}^0(C)$ such that $\eta\otimes \eta \cong \mathscr O_C$.

\subsection{The Riemann Singularity Theorem for Pryms, and consequences of Kempf's theorem}
The question of extending the RST to Pryms goes back to Mumford's original paper \cite{mprym}.  To be more precise, by extending the RST we mean relating the multiplicity of a point $x\in \Xi$ to the space of global sections of a line bundle on $\widetilde{C}$ or $C$.  As a parallel to Riemann's theorem,   Mumford showed that
as a set
$$
P=\{L\in \operatorname{Pic}^{2g-2}(\widetilde{C}):\operatorname{Nm} L\cong \omega_C, \ h^0(L)\equiv 0 \pmod{2}\}
$$
and 
$$
\Xi=\{L\in P:h^0(L)>0\}.
$$
Since $\widetilde{\Theta}|_P=2\Xi$, it follows that 
$\operatorname{mult}_L\Xi\ge h^0(L)/2$ with equality holding if and only if $T_LP\nsubseteq C_L\widetilde{\Theta}$. 
In other words, there is immediately an RST for Pryms at all points such that $T_LP\nsubseteq C_L\widetilde{\Theta}$.  
This motivates the following definition.  We will say that a point $L\in \Xi$ is RST-stable if 
$T_LP\nsubseteq C_L\widetilde{\Theta}$, and  RST-exceptional otherwise.  Note that this is slightly different from the standard notation in the literature;  in the standard  notation $L$ is exceptional  if  it is  RST-exceptional \emph{and} $h^0(L)=2$, and it is stable  otherwise.
We will see that we will want to have this stronger distinction later.   

Using a strengthened version of Martens' theorem, Mumford showed:

\begin{teo}[Mumford \cite{mprym}]
If $g(C)\ge 5$, and $C$ is not hyperelliptic, then $\dim \operatorname{Sing}\Xi \le g-5$.
\end{teo}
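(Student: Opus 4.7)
The plan is to partition $\operatorname{Sing} \Xi$ according to the RST dichotomy and bound each part separately. Since $\widetilde{\Theta}|_P = 2\Xi$ and $\operatorname{mult}_L \Xi \ge h^0(L)/2$ with equality precisely at RST-stable points, a point $L \in \Xi$ is singular exactly when either $L$ is RST-stable with $h^0(L) \ge 4$, or $L$ is RST-exceptional (which forces $\operatorname{mult}_L \Xi \ge 2$ already when $h^0(L)=2$). Writing $\operatorname{Sing}\Xi = S_{\mathrm{st}} \cup S_{\mathrm{exc}}$ accordingly, I aim to prove $\dim S_{\mathrm{st}} \le g-5$ and $\dim S_{\mathrm{exc}} \le g-5$ separately.

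For $S_{\mathrm{st}}$, the observation is that $S_{\mathrm{st}} \subseteq P \cap W^3_{2g-2}(\widetilde{C})$. Martens' theorem on $\widetilde{C}$ (of genus $2g-1$) with $d=2g-2$, $r=3$ gives $\dim W^3_{2g-2}(\widetilde{C}) \le 2g-8$, with equality iff $\widetilde{C}$ is hyperelliptic. Since a hyperelliptic involution on $\widetilde{C}$ would commute with $\tau$ and descend to $C$, our hypothesis that $C$ is non-hyperelliptic forces the Martens bound to be strict. Combining this strict inequality with the further cut imposed by the condition $\operatorname{Nm} L \cong \omega_C$ on the Prym, and exploiting the $\tau$-symmetry $\tau^*L \cong \omega_{\widetilde{C}} \otimes L^{-1}$ for $L \in P$ (which is the mechanism behind the strengthened Martens estimate Mumford uses), one gets $\dim S_{\mathrm{st}} \le g-5$.

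For $S_{\mathrm{exc}}$, the plan is to characterize RST-exceptional bundles structurally. Kempf's theorem describes $C_L \widetilde{\Theta}$ as a determinantal hypersurface cut out by the Petri matrix on $\widetilde{C}$, while $T_L P \subset T_L J\widetilde{C} = H^1(\mathscr{O}_{\widetilde{C}})$ is the $\tau$-antiinvariant subspace. The containment $T_L P \subset C_L\widetilde{\Theta}$ translates into a vanishing condition on the Petri map restricted to the antiinvariant part, and unraveling this condition forces sections of $L$ to be compatible with $\pi$ in a very restrictive way. Following Mumford, such $L$ (up to twist by $\pi^*\eta$-type data) must arise from Brill--Noether loci on $C$ itself, whose dimensions are controlled by Martens' theorem on $C$; here the non-hyperellipticity of $C$ together with $g \ge 5$ bounds the resulting family by $g-5$.

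The main obstacle is $S_{\mathrm{exc}}$: the stable piece reduces cleanly to Brill--Noether theory on $\widetilde{C}$, but RST-exceptional points do not have a direct Brill--Noether interpretation upstairs. The delicate step is matching Kempf's tangent-cone description against the Prym subspace $T_L P$ and showing that the containment $T_L P \subset C_L \widetilde{\Theta}$ forces sections of $L$ to factor through data on $C$ in a controlled fashion. It is precisely at this point that both $g \ge 5$ and the non-hyperellipticity of $C$ become indispensable, since otherwise exceptional bundles can accumulate along higher-dimensional loci pulled back from low-genus or hyperelliptic phenomena on $C$.
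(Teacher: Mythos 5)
Your decomposition of $\operatorname{Sing}\Xi$ into RST-stable points with $h^0(L)\ge 4$ and RST-exceptional points is exactly the framework of Mumford's argument that the paper sketches, and the two ingredients you name --- a strengthened Martens-type bound for the first piece and a structural characterization of exceptional bundles for the second --- are the right ones. But both halves, as written, assert the conclusion rather than prove it. For the stable piece: ordinary Martens on $\widetilde C$ (genus $2g-1$, $d=2g-2$, $r=3$) gives only $\dim W^3_{2g-2}(\widetilde C)\le 2g-9$ after your (correct) observation that a hyperelliptic involution on $\widetilde C$ would descend and make $C$ hyperelliptic; this is still $g-4$ away from the target $g-5$. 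The phrase ``combining this with the further cut imposed by $\operatorname{Nm} L\cong\omega_C$'' has no force unless you show the intersection of $W^3_{2g-2}(\widetilde C)$ with the codimension-$g$ fiber of the norm map is nearly proper, which there is no reason to expect and which is not how Mumford proceeds. What is actually required is his strengthened Martens theorem, proved directly for the loci $\{L:\operatorname{Nm} L\cong\omega_C,\ h^0(L)\ge 2r\}$ by a Martens-style induction that subtracts points in $\tau$-conjugate pairs and exploits $\tau^*L\cong\omega_{\widetilde C}\otimes L^{-1}$; your sentence ``which is the mechanism behind the strengthened Martens estimate Mumford uses, one gets $\dim S_{\mathrm{st}}\le g-5$'' cites precisely the statement that has to be established.

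For the exceptional piece the situation is similar. Since exceptional points with $h^0(L)\ge 4$ already lie in $W^3_{2g-2}(\widetilde C)\cap P$ and are covered by the strengthened Martens bound, what genuinely remains is the locus of exceptional $L$ with $h^0(L)=2$, and there you need two things you do not supply: (i) the characterization $L\cong\pi^*M\otimes\mathscr O_{\widetilde C}(B)$ with $h^0(M)\ge 2$ and $B\cap\tau^*B=\emptyset$ (Mumford's key lemma in the case $h^0(L)=2$; Theorem 2.3 of the paper in general) --- translating $T_LP\subseteq C_L\widetilde\Theta$ through Kempf's determinant into this factorization statement is a genuine computation with the cup-product/Petri pairing on the anti-invariant part, not a formal consequence; and (ii) the dimension count of the resulting family, parametrized by $M\in W^1_m(C)$ together with divisors $B$ whose norm lies in $|\omega_C\otimes M^{-2}|$, where Martens on $C$ (here non-hyperellipticity and $g\ge 5$ really enter) gives $\dim W^1_m(C)\le m-2$ and the total is bounded by $g-5$. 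So the plan coincides with the paper's plan, but the two load-bearing lemmas are named rather than proved.
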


 For $g\ge 5$, a complete  list was given of double covers for which $\dim \left(\operatorname{Sing}\Xi \right)$ $= g-5$, and it was shown that when $C$ is hyperelliptic the Prym is a hyperelliptic Jacobian or the product of hyperelliptic Jacobians.  Note also that for $g(C)\le 4$, the Prym is a Jacobian (or product of Jacobians) and so the dimension of the singular locus is known in this case as well.  We refer the reader to the paper for more details.
A key step in the proof is to give a description of those line bundles which are exceptional in the standard notation; i.e.  $L$ is RST-exceptional and $h^0(L)=2$.
A more general statement for all RST-exceptional singularities was  proven recently by Smith and Varley  using methods similar to (though a great deal more involved than) those used in Mumford's original proof for the case $h^0(L)=2$.

\begin{teo}[Smith-Varley \cite{svriemann}]\label{teosvrst}
Suppose $L\in \Xi$.  Then $L$ is RST-exceptional, i.e.  $\operatorname{mult}_L\Xi>h^0(L)/2$, if and only if $L\cong \pi^*M \otimes \mathscr O_{\widetilde{C}}(B)$, where $M$ is a line bundle on $C$ with $h^0(M)>h^0(L)/2$, and $B\ge 0$ is an effective divisor on $\widetilde{C}$ such that
$\tau^*B \cap B=\emptyset$. 
\end{teo}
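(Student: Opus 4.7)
The plan is to translate RST-exceptionality into a tangent-cone containment via Kempf's theorem, then establish each implication separately, with the converse being the essential technical content.

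\textbf{Setup.} Since $\pi$ is \'etale, $K_{\widetilde C}=\pi^*K_C$, and $\operatorname{Nm}L=\omega_C$ gives $K_{\widetilde C}\otimes L^\vee\cong\tau^*L$. The projection formula $\pi_*\mathscr O_{\widetilde C}=\mathscr O_C\oplus\eta$ induces a $\tau$-eigendecomposition $H^0(K_{\widetilde C})=H^0(K_C)\oplus H^0(K_C\otimes\eta)$, and dually $H^1(\mathscr O_{\widetilde C})=H^+\oplus H^-$ with $H^-=H^1(\eta)$; the Prym tangent space is $\mathbb PT_LP=\mathbb P(H^-)$. Since $\widetilde\Theta|_P=2\Xi$ and $\operatorname{mult}_L\widetilde\Theta=h^0(L)$ by the RST, the condition $\operatorname{mult}_L\Xi>h^0(L)/2$ is equivalent to $\mathbb PT_LP\subseteq C_L\widetilde\Theta$. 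By Kempf, $C_L\widetilde\Theta=\{\det(x_iy_j)=0\}$, and choosing $y_j=\tau^*x_j$ puts each entry $x_i\cdot\tau^*x_j$ in $H^0(K_{\widetilde C})$.

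\textbf{If direction.} Set $m=h^0(M)$ and $n=h^0(L)$. Choose a basis $s_1,\dots,s_m$ of $H^0(M)$ and complete $\pi^*s_1\cdot s_B,\dots,\pi^*s_m\cdot s_B$ to a basis $x_1,\dots,x_n$ of $H^0(L)$. For $i,j\le m$,
\[
x_i\cdot\tau^*x_j=\pi^*(s_is_j)\cdot s_B\cdot s_{\tau^*B}.
\]
Since $\tau^*B\cap B=\emptyset$, $B+\tau^*B=\pi^{-1}D$ for the effective divisor $D=\pi_*B$ on $C$, so $s_B\cdot s_{\tau^*B}\in\pi^*H^0(\mathscr O_C(D))$; moreover $\operatorname{Nm}L=\omega_C$ forces $M^2\otimes\mathscr O_C(D)=\omega_C$, so the entire top-left $m\times m$ block of $(x_iy_j)$ lies in $\pi^*H^0(K_C)$, i.e.\ in the $\tau$-invariant part of $H^0(K_{\widetilde C})$. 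Under Serre duality, $\tau$-invariant forms pair trivially with $\tau$-anti-invariant classes, so this block vanishes identically on $\mathbb P(H^-)$. The restricted matrix then has rank at most $2(n-m)<n$, so $\det(x_iy_j)$ vanishes on $\mathbb PT_LP$ and hence $T_LP\subseteq C_L\widetilde\Theta$.

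\textbf{Only if direction.} Assume $T_LP\subseteq C_L\widetilde\Theta$. By Kempf's support statement, $\mathbb P(H^-)\subseteq\bigcup_{D\in|L|}\overline{\phi_K(D)}$. Every effective divisor on $\widetilde C$ has a unique decomposition $D=\pi^*D'+B$ with $\tau^*B\cap B=\emptyset$, and geometric Riemann-Roch gives $\dim\overline{\phi_K(D)}=\deg L-h^0(L)$, with the $\tau$-anti-invariant part of that span generated by the differences $\phi_K(p)-\phi_K(\tau p)$ arising from the pullback piece $\pi^*D'$. I would use $\tau$-equivariance to reduce to $\tau$-structured divisors, then combine Mumford's parity constraint $h^0(L)\equiv 0\pmod 2$ on $P$ with Martens-type bounds on $\widetilde C$ to locate a single $D_0=\pi^*D'+B\in|L|$ whose span already contains $\mathbb P(H^-)$. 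Setting $M=\mathscr O_C(D')$ then gives $L\cong\pi^*M\otimes\mathscr O_{\widetilde C}(B)$, and reversing the rank calculation of the \emph{if} direction forces $h^0(M)>h^0(L)/2$, since $h^0(M)\le h^0(L)/2$ would leave room for $\det(x_iy_j)$ not to vanish identically on $\mathbb P(H^-)$.

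\textbf{Main obstacle.} The hardest step is this extraction of a single, $\tau$-decomposed divisor $D_0\in|L|$ from the set-theoretic covering of $\mathbb P(H^-)$ by the family of spans $\overline{\phi_K(D)}$. Mumford handled the case $h^0(L)=2$ by explicit analysis; extending to arbitrary $h^0(L)$ requires a more subtle study of how the family $\{\overline{\phi_K(D)}\}_{D\in|L|}$ sweeps out $C_L\widetilde\Theta$ near $\mathbb P(H^-)$, and a careful control of the irreducible components of the relevant Brill-Noether loci via Martens' inequality to rule out configurations inconsistent with the tangent-cone containment.
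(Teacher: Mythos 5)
The paper itself contains no proof of this theorem: it is a survey item quoted from Smith--Varley, with the remark that their argument is a (much more involved) extension of Mumford's analysis of the case $h^0(L)=2$, and that an independent proof via second-order deformations of $L$ is given in \cite{casa}. So I can only assess your argument on its own terms. Your setup and your \emph{if} direction are correct and complete: the reduction of $\operatorname{mult}_L\Xi>h^0(L)/2$ to $T_LP\subseteq C_L\widetilde{\Theta}$, the choice $y_j=\tau^*x_j$ in Kempf's matrix, and the observation that the top-left $h^0(M)\times h^0(M)$ block consists of the $\tau$-invariant forms $\pi^*(s_is_js_D)$, which annihilate $H^-$, so that the restricted matrix has rank at most $2(n-m)<n$. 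This is exactly the mechanism behind Proposition 2.7 of the paper and is fine.

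The \emph{only if} direction, however, is not a proof, and the route you sketch has a defect beyond mere incompleteness. You propose to pass from the covering $\mathbb P(H^-)\subseteq\bigcup_{D\in|L|}\overline{\phi_K(D)}$ to a single $D_0=\pi^*D'+B\in|L|$ with $\mathbb P(H^-)\subseteq\overline{\phi_K(D_0)}$. But the linear forms cutting out $\overline{\phi_K(D_0)}$ are $H^0(K_{\widetilde C}-D_0)=s_{D_0}\cdot\tau^*H^0(L)\subseteq H^0(K_{\widetilde C})$, so that containment is equivalent to $s_{D_0}\cdot\tau^*H^0(L)\subseteq\pi^*H^0(K_C)$. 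This holds when every section of $L$ is of the form $\pi^*t\cdot s_B$ with $t\in H^0(M)$ (e.g. $h^0(L)=2=h^0(M)$, Mumford's case), but already for $x=\pi^*u\cdot s_B$ with $u\in H^0(M\otimes\eta)$ one gets $s_{D_0}\tau^*x=\pi^*(s_{D'}us_D)\in\pi^*H^0(K_C\otimes\eta)$, which is anti-invariant and does \emph{not} vanish on $H^-$; a fortiori the claim fails when $H^0(L)\ne H^0(L(-B))$. So in general (e.g. $h^0(M)=3$, $h^0(M\otimes\eta)=1$, $h^0(L)=4$, which is RST-exceptional) no single span contains $\mathbb P(H^-)$: the $(n-1)$-dimensional family of $(2g-2-n)$-planes sweeps out the $(g-2)$-plane without any member containing it, and no dimension count or $\tau$-equivariance argument will produce your $D_0$. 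The entire content of the theorem is this converse, and it requires different machinery --- Smith--Varley work directly with the invariant and anti-invariant components of the entries $x_i\tau^*x_j$ and a case analysis far beyond Martens' inequality and the parity of $h^0(L)$, while \cite{casa} uses second-order deformations; none of that is supplied here.
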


This theorem gives a necessary and sufficient condition for determining when $\operatorname{mult}_L\Xi =h^0(L)/2$, but does not address the issue of the multiplicity of the points at which $\operatorname{mult}_L\Xi >h^0(L)/2$.  
A solution to this problem, and thus a completion of the RST for Pryms, was given by the author.
The proof is based on the detailed analysis of second order deformations of $L$.  The methods also yield a proof of the previous theorem. 
\begin{teo}[\cite{casa}]
If $L\cong \pi^*M \otimes \mathscr O_{\widetilde{C}}(B)$, where $M$ is a line bundle on $C$ with $h^0(M)>h^0(L)/2$, and $B\ge 0$ is an effective divisor on $\widetilde{C}$ such that
$\tau^*B \cap B=\emptyset$, then $\operatorname{mult}_L\Xi=h^0(M)$.
\end{teo}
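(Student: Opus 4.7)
The plan is to pin down $\operatorname{mult}_L\Xi$ by expanding a local equation $F$ of $\widetilde\Theta$ at $L$ in Taylor series along the Prym tangent space, using Kempf's determinantal description of the leading term and enhancing it with a second-order deformation analysis. With $h=h^0(L)$ and $F=F_h+F_{h+1}+\cdots$, the identity $\widetilde\Theta|_P=2\Xi$ reduces the claim $\operatorname{mult}_L\Xi=m$ (where $m=h^0(M)$) to showing that the smallest $k$ with $F_k|_{T_LP}\not\equiv 0$ equals $2m$.

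By Kempf's theorem, $F_h=\det(x_iy_j)$ in bases $\{x_i\}$ of $H^0(L)$ and $\{y_j\}$ of $H^0(\tau^*L)=H^0(K_{\widetilde C}\otimes L^\vee)$, with the entries regarded as linear forms on $T_LJ\widetilde C=H^1(\mathscr O_{\widetilde C})$. The factorization $L\cong\pi^*M\otimes\mathscr O_{\widetilde C}(B)$ furnishes the $m$-dimensional subspaces $V:=\pi^*H^0(M)\cdot\sigma_B\subset H^0(L)$ and $\tau^*V\subset H^0(\tau^*L)$; I pick bases so that $x_1,\dots,x_m$ spans $V$ and $y_1,\dots,y_m$ spans $\tau^*V$. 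The hypothesis $\tau^*B\cap B=\emptyset$ yields $\sigma_B\cdot\tau^*\sigma_B=\pi^*\sigma_{\pi(B)}$, whence for $i,j\le m$ the product $x_iy_j=\pi^*(s_is_j'\sigma_{\pi(B)})\in\pi^*H^0(K_C)$ is $\tau^*$-invariant. Since $T_LP=H^1(\eta)$ is the $(-1)$-eigenspace of $\tau^*$ on $H^1(\mathscr O_{\widetilde C})$, these $m^2$ entries restrict to zero linear forms on $T_LP$, producing an $m\times m$ block of zeros in the restricted Petri matrix. Because $2m>h$, a pigeonhole count on permutations forces $F_h|_{T_LP}\equiv 0$, recovering $\operatorname{mult}_L\Xi>h/2$ and confirming RST-exceptionality a la Theorem \ref{teosvrst}.

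The core of the proof is then to compute the higher Taylor coefficients. I would promote the Petri matrix to an $h\times h$ matrix $\widetilde A(v)$ of polynomial entries in $v\in T_LP$ whose determinant reproduces $F|_{T_LP}$ order by order. Via a careful second-order deformation analysis — writing formal arcs in $P$ in direction $v\in H^1(\eta)$ and tracking the obstructions to extending sections of the form $\pi^*s\cdot\sigma_B$ — one identifies, for $(x_i,y_j)\in V\times\tau^*V$, the correction $\widetilde A_{ij}(v)$ with a quadratic form in $v$ built from the base-curve Petri pairing $\mu_C:H^0(M)\otimes H^0(K_C\otimes M^\vee)\to H^0(K_C)$ applied to $(s_i,s_j'\sigma_{\pi(B)})$. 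Hence $\widetilde A(v)$ is of mixed order: top-left $m\times m$ block of order $\ge 2$, remaining entries of order $\ge 1$. A combinatorial count of permutations — any nonzero permutation must pick exactly $2m-h$ entries from the quadratic block to skirt the forbidden first-order zeros — shows $\det\widetilde A(v)$ has lowest-degree part in degree $2(2m-h)+2(h-m)=2m$.

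For the matching upper bound, one identifies the degree-$2m$ leading coefficient of $\det\widetilde A(v)$ with (a nonzero multiple of) the Kempf tangent cone of $\Theta_C$ at $M$: it assembles into the $m\times m$ Petri determinant $\det(\mu_C(s_i\otimes s_j'\sigma_{\pi(B)}))$, which by Kempf's theorem applied to $\Theta_C\subset JC$ is the nonzero tangent cone polynomial of degree $m=\operatorname{mult}_M\Theta_C$. Thus $F_k|_{T_LP}=0$ for $h\le k<2m$ and $F_{2m}|_{T_LP}\not\equiv 0$, giving $\operatorname{mult}_L(\widetilde\Theta|_P)=2m$ and therefore $\operatorname{mult}_L\Xi=m$. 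The hardest step is the identification of the second-order correction with the base-curve Petri map: this requires unpacking how obstructions to lifting sections of $L$ along anti-invariant directions descend to cohomological data on $C$, and it is precisely here that the decomposition $L=\pi^*M\otimes\mathscr O_{\widetilde C}(B)$ with $\tau^*B\cap B=\emptyset$ is indispensable. A secondary subtlety is verifying that the assembled leading coefficient in degree $2m$ is not itself identically zero on $T_LP$ once all contributing permutation terms are summed, which reduces to the non-degeneracy of $\mu_C$ at $M$ guaranteed by Kempf on the base curve.
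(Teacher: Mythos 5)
Your architecture matches the method indicated in the text for \cite{casa} (a second-order deformation analysis of a local determinantal presentation of $\widetilde{\Theta}$ restricted to $P$), and the first half of your argument is essentially right: using $\widetilde{\Theta}|_P=2\Xi$, the entries $x_iy_j$ with $x_i\in V=\pi^*H^0(M)\cdot\sigma_B$ and $y_j\in\tau^*V$ are $\tau^*$-invariant elements of $H^0(K_{\widetilde{C}})$, hence vanish on $T_LP=H^1(\eta)=H^0(K_C\otimes\eta)^\vee$, so the corresponding $m\times m$ block of the presentation matrix vanishes to order $\ge 2$ along $P$; the permutation count then gives $\operatorname{mult}_L\left(\widetilde{\Theta}|_P\right)\ge 2(2m-h)+2(h-m)=2m$, i.e. $\operatorname{mult}_L\Xi\ge m$, refining Theorem \ref{teosvrst}.

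The gap is in the upper bound, which is the real content of the theorem, and your treatment of it is not just incomplete but structurally off. The degree-$2m$ part of $\det(A|_P)$ is a sum over permutations using exactly $2m-h$ block entries, so each term is a product of $2m-h$ quadratic forms with $2(h-m)$ of the \emph{off-block} linear forms $x_iy_j|_{H^1(\eta)}$ (with $i>m$ or $j>m$); it cannot ``assemble into'' the $m\times m$ determinant $\det(\mu_C(s_i\otimes s_j'\sigma_{\pi_*B}))$, which is a degree-$m$ polynomial on $H^1(\mathscr O_C)$ rather than a degree-$2m$ polynomial on $H^1(\eta)$, and there is no natural quadratic map $H^1(\eta)\to H^1(\mathscr O_C)$ through which to pull it back (note $H^1(\eta)\otimes H^1(\eta)\to H^2(\mathscr O_C)=0$). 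The appeal to ``Kempf on the base curve'' also fails: $\deg M=g-1-\tfrac{1}{2}\deg B$ is in general not $g-1$, so $\operatorname{mult}_M\Theta_C$ is undefined, and your columns span only the proper subspace $\sigma_{\pi_*B}\cdot H^0(M)\subseteq H^0(K_C\otimes M^\vee)$, so nonvanishing of that square determinant is not supplied by Kempf's theorem. Finally, the identification of the order-two terms of the block entries is asserted rather than derived: for $v\in H^1(\eta)$ the class $v\cup x_i\in H^1(L)$ need not vanish (only its pairing against $\tau^*V$ does), so the sections $x_i$ do not extend to first order along arcs in $P$ and the ``second-order obstruction'' you invoke is not defined without further normalization of the presentation matrix. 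Computing those quadratic terms correctly and proving the resulting degree-$2m$ coefficient is not identically zero on $H^1(\eta)$ is precisely the hard step of \cite{casa}, and your proposal does not supply it.
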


Together with Mumford's strengthened version of Martens' theorem, one can use the previous theorems to  give a bound on the dimension of $\operatorname{Sing}_k\Xi$:

\begin{cor}[\cite{casa}]\label{corprym} For a Prym variety $(P,\Xi)$ of dimension $g-1$, 
$$\dim \operatorname{Sing}_k\Xi \le (g-1)-2k+1.$$
\end{cor}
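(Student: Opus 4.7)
The plan is to stratify $\operatorname{Sing}_k\Xi$ using the dichotomy furnished by Theorem \ref{teosvrst} together with the multiplicity formula of \cite{casa}: any $L\in\operatorname{Sing}_k\Xi$ is either RST-stable, in which case $\operatorname{mult}_L\Xi = h^0(L)/2$ and hence $h^0(L)\ge 2k$, or RST-exceptional, in which case $L\cong\pi^*M\otimes\mathscr O_{\widetilde C}(B)$ with $h^0(M) = \operatorname{mult}_L\Xi\ge k$, $B\ge 0$, and $\tau^*B\cap B=\emptyset$. It then suffices to bound the dimension of each of the two resulting loci by $g-2k$.

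For the RST-stable locus, points lie in $W^{2k-1}_{2g-2}(\widetilde C)\cap P$. I would invoke Mumford's strengthened version of Martens' theorem from \cite{mprym}, whose formulation is tailored precisely to bound dimensions of Brill--Noether-like loci on $\widetilde C$ intersected with the Prym. For $k=2$ this yields Mumford's original bound $\dim\operatorname{Sing}\Xi\le g-5$; the same argument for higher $k$ gives the desired $g-2k$ (or better).

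For the RST-exceptional locus, I would parameterize by pairs $(M,B)$ and stratify by $d=\deg M$, so $\deg B=2g-2-2d$. By Martens applied to $C$, $\dim W^{k-1}_d(C)\le d-2k+2$. For fixed $M$, the Prym condition $\operatorname{Nm}L=\omega_C$ becomes $\operatorname{Nm}\mathscr O_{\widetilde C}(B)\cong \omega_C\otimes M^{-2}$, confining $\operatorname{Nm}B$ to the linear system $|\omega_C\otimes M^{-2}|$ of dimension $h^0(M^2)+g-2-2d$ by Riemann--Roch; the lift of $\operatorname{Nm}B$ back to $B\in\widetilde C_{2g-2-2d}$ is then finite, since $\pi$ is \'etale and $\tau^*B\cap B=\emptyset$ pins down a sheet choice over each point of the support. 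Summing these contributions and applying Clifford's inequality $h^0(M^2)\le d+1$ gives the bound $g-2k+1$ on the $d$-stratum of the exceptional locus.

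Shaving off the final $1$ is the main obstacle. Clifford equality forces $M^2\in\{\mathscr O_C,\omega_C\}$ or $C$ hyperelliptic with $M^2$ a power of the hyperelliptic pencil; the first two possibilities give low-dimensional degenerate strata handled directly, while in the hyperelliptic case the Prym degenerates to a Jacobian or product of Jacobians (as Mumford showed), so the bound follows from the classical Martens inequality \eqref{eqnmart} on those factors. Taking the maximum over the stable and exceptional bounds then yields $\dim\operatorname{Sing}_k\Xi\le g-2k$.
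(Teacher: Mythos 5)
Your overall strategy---splitting $\operatorname{Sing}_k\Xi$ into RST-stable and RST-exceptional points via Theorem \ref{teosvrst} and the multiplicity formula, bounding the stable part by Mumford's Martens-type lemma and the exceptional part by a parameter count over pairs $(M,B)$---is exactly the route the paper indicates, and your dimension count for the exceptional stratum ($\dim W^{k-1}_d(C)+\dim|\omega_C\otimes M^{-2}|$ plus a finite fiber of $B\mapsto\pi_*B$) is sound. Two points need attention, though. First, the stable part is asserted rather than proved: ordinary Martens applied to $\widetilde C$ gives only $\dim W^{2k-1}_{2g-2}(\widetilde C)\le 2g-4k$, so all of the content lies in Mumford's refinement for the fibers of the norm map, and that lemma (like Martens' own) carries a list of exceptional covers that must be tracked for each $k$; ``the same argument for higher $k$'' is precisely the step requiring work. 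Relatedly, in the exceptional count you can avoid the delicate Clifford-equality analysis on $M^2$ altogether when $C$ is not hyperelliptic, since strict Martens already gives $\dim W^{k-1}_d(C)\le d-2k+1$ there, which yields $g-2k$ directly.

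Second, and more seriously, your endgame for the hyperelliptic case fails. When $C$ is hyperelliptic the Prym is a hyperelliptic Jacobian or a product $J_1\times J_2$ of such; for a product the theta divisor is $\Theta_1\times J_2+J_1\times\Theta_2$, so $\operatorname{Sing}_2\Xi$ contains $\Theta_1\times\Theta_2$, which has dimension $(g-1)-2$ and exceeds the claimed bound $(g-1)-3$. Thus the bound does not ``follow from the classical Martens inequality on those factors''---it is simply false for decomposable Pryms with $k\ge 2$ (compare Table 2, where $N^2_3\subset\mathcal A_5$ is exactly the product locus $\mathcal A_{1,4}\cup\mathcal A_{2,3}$, which contains Pryms of hyperelliptic covers). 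The statement you are proving implicitly requires indecomposability at this point, so the hyperelliptic case must either be restricted to the single-Jacobian alternative or the hypothesis made explicit; it cannot be waved through as written.
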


A result of Beauville's states that in the case that  $k=2$, equality holds only if $C$ is hyperelliptic.  
It would be interesting to know for other $k$ to what extent equality holding implies that $C$ is hyperelliptic.  
At least 
for $g=6$, and $k=3$, equality holds both for hyperelliptic Jacobians, and  intermediate Jacobians of the cubic threefolds.
In fact for all ppavs of dimension four and five it is possible to determine exactly when equality holds.  We refer the reader to the Tables 1 and 2.  We pose the following question for higher genera. 
\begin{que}
For $g\ge 7$ and $k\ge 3$, what are the Prym varieties $(P,\Xi)$ of dimension $g-1$ such that $\dim \operatorname{Sing}_k\Xi = (g-1)-2k+1$?  
\end{que}

We now turn our attention to the question of generalizing  Kempf's theorem to the case of Prym varieties.
In the case of the Jacobian, the theta divisor could be viewed as the zero locus of the determinant of a map of vector bundles.  For the Prym variety, results of Smith and Varley \cite{svpfaf} show that $\Xi$ can be viewed as the zero locus of the  Pfaffian of a map of vector bundles.  In particular, the same can be said of the tangent cones.  We refer the reader to this reference for more details. Although not derived from a global description, it was an observation of Mumford's that in the case of RST-stable singularities (i.e. such that $T_LP\nsubseteq C_L\widetilde{\Theta}$), the restriction of Kempf's matrix gives a Pfaffian description of the tangent cone. A consequence is

\begin{pro}[Mumford \cite{mprym}]
If $L\in \Xi$ is RST-stable, then the tangent cone is the zero locus of the Pfaffian of an $h^0(L)\times h^0(L)$ skew-symmetrc matrix. It follows that
if $L\in \Xi$ is an RST-stable double point (i.e. $mul_x\Xi=h^0(L)/2=2$),
then $\operatorname{rank} C_x\Xi\le 6$.
Moreover, if $g\ge 7$, then $\operatorname{rank} C_x\Xi=6$ if the Prym-Petri map 
$
\bigwedge^2 H^0(L) \rightarrow H^0(\omega_C\otimes \eta)
$
given by $s\wedge t\mapsto \frac{1}{2} (s\cdot \tau^*t-\tau^*s \cdot t)$
is injective.
    \end{pro}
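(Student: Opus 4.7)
The plan is to adapt Kempf's determinantal description of $C_L\widetilde\Theta$ to the Prym setting, using the involution $\tau$ to impose skew-symmetry on the Prym tangent space. First, the Prym condition $\operatorname{Nm} L \cong \omega_C$ says $L \otimes \tau^*L \cong \pi^*\omega_C \cong \omega_{\widetilde C}$, so $\tau^*\colon H^0(L) \xrightarrow{\sim} H^0(\omega_{\widetilde C}\otimes L^{-1})$ is a canonical identification. Fixing a basis $x_1,\ldots,x_{h^0(L)}$ of $H^0(L)$ and taking $y_j = \tau^*x_j$ as the corresponding basis on the other factor, Kempf's theorem writes $C_L\widetilde\Theta$ as $\{\det M = 0\}$, where $M_{ij} = x_i\cdot \tau^*x_j \in H^0(\omega_{\widetilde C})$ is viewed as a linear form on $T_LJ\widetilde C = H^1(\mathcal O_{\widetilde C})$ via Serre duality. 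The pullback by $\tau$ interchanges the two factors, $\tau^*(x_i\cdot \tau^*x_j) = \tau^*x_i\cdot x_j = M_{ji}$, so $\tau$ acts on $M$ by transposition.

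Next I would restrict to $T_LP$, which is the $(-1)$-eigenspace of $\tau^*$ on $H^1(\mathcal O_{\widetilde C})$, paired under Serre duality with the $(-1)$-eigenspace $H^0(\omega_C\otimes \eta) \subset H^0(\omega_{\widetilde C})$. Restricting the entries of $M$ to $T_LP$ therefore picks out their antisymmetric parts, producing a skew-symmetric matrix $N$ with
\[
N_{ij} \;=\; \tfrac{1}{2}\bigl(x_i\cdot \tau^*x_j - x_j\cdot \tau^*x_i\bigr) \;\in\; H^0(\omega_C\otimes \eta).
\]
Since $\widetilde\Theta\cdot P = 2\Xi$ as divisors on $P$, local equations $f$ and $g$ for $\widetilde\Theta$ and $\Xi$ at $L$ satisfy $f|_P = u\cdot g^2$ with $u$ a unit; the RST-stability assumption $T_LP\not\subseteq C_L\widetilde\Theta$ ensures that the leading term of $f$ restricts nontrivially to $T_LP$ and has the correct degree $h^0(L) = 2\operatorname{mult}_L\Xi$. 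Passing to initial terms then yields $\det M|_{T_LP} = \det N = \operatorname{pf}(N)^2$, equal up to a nonzero scalar to the square of the tangent cone equation of $\Xi$. Hence $C_L\Xi = \{\operatorname{pf} N = 0\}$.

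For the bound on the rank at a double point, $h^0(L) = 4$, and $\operatorname{pf} N = N_{12}N_{34} - N_{13}N_{24} + N_{14}N_{23}$ is a quadratic form in the six linear entries $N_{ij}$, $i<j$. Viewed as a quadratic form in six formal variables this is the smooth Pl\"ucker quadric of rank $6$, so pulling back along the linear map $T_LP \to \mathbb C^6$ whose coordinates are these six entries can only decrease the rank. Equality holds precisely when the six entries are linearly independent in $T_LP^\vee \cong H^0(\omega_C\otimes \eta)$; and these entries are exactly the images of the basis $\{x_i\wedge x_j\}_{i<j}$ of $\wedge^2 H^0(L)$ under the Prym-Petri map. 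Thus rank $6$ is equivalent to Prym-Petri injectivity, while the hypothesis $g\ge 7$ ensures $\dim H^0(\omega_C\otimes \eta) = g-1 \ge 6$ so that six independent forms can actually fit.

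The step I expect to require the most care is the second one: matching multiplicities so that restricting Kempf's tangent cone to $T_LP$ yields exactly the square of the tangent cone of $\Xi$, rather than something vanishing to higher order. Kempf's RST combined with RST-stability does the job, but one has to keep careful track of the degrees of the initial forms and the nonvanishing of the unit $u$ after passage to leading terms.
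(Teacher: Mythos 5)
Your argument is correct and is precisely the one the paper indicates and attributes to Mumford: restrict Kempf's matrix $(x_i\cdot\tau^*x_j)$ to $T_LP$, note that only the anti-invariant parts of the entries survive so the matrix becomes skew-symmetric with entries the Prym--Petri images $\tfrac12(x_i\cdot\tau^*x_j-x_j\cdot\tau^*x_i)$, and use $\widetilde\Theta|_P=2\Xi$ together with RST-stability to match initial forms and identify $C_L\Xi$ with the Pfaffian locus. The rank bound via pulling back the rank-$6$ Pl\"ucker quadric $N_{12}N_{34}-N_{13}N_{24}+N_{14}N_{23}$ along the six entry forms, with equality exactly when the Prym--Petri map is injective (and $g\ge 7$ so that $h^0(\omega_C\otimes\eta)=g-1\ge 6$), is likewise the standard argument, so nothing further is needed.
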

    
A result of Welters' \cite[Theorem 1.11]{wgp} shows that the Prym-Petri map is injective for a general curve.
One can check directly that the existence of an RST-exceptional double point implies that the existence of a line bundle on $C$ for which the Petri map is not injective.  Due to Gieseker's theorem \cite{gies} that this map is injective on a general curve, it follows that for a general Prym variety of dimension at least six, the double points of the theta divisor have rank six.  We refer the reader to \cite{svtorelli,svpac} for more details about the existence of double points with smaller rank.
We cite here the following result giving 
 further details about the structure of the Pfaffian:

\begin{pro}[Smith-Varley 
\cite{svtorelli} Proposition 3.6]\label{propf}
Suppose that $L\in \Xi$ is an RST-stable double point.
If $L=\pi^*M\otimes \mathcal{O}_{\tilde{C}}(B)$, for a line bundle $M$ on $C$ such that
$h^0(C,M)=2$, and an effective divisor $B$ on $\tilde{C}$ such that $B\cap\tau^*B=\emptyset$, then there is a basis such that the upper left 
 $2\times 2$ block of the matrix above is zero.  In particular, the rank of the quadric tangent cone is at most four.   
    \end{pro}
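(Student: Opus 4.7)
The plan is to make Mumford's Pfaffian description of the tangent cone explicit in a basis of $H^0(L)$ adapted to the decomposition $L\cong \pi^*M\otimes \mathscr O_{\widetilde{C}}(B)$. Since $L$ is an RST-stable double point we have $\operatorname{mult}_L\Xi=h^0(L)/2=2$, so $h^0(L)=4$, and by the previous proposition the tangent cone $C_L\Xi$ is the Pfaffian of a $4\times 4$ skew-symmetric matrix $M=(M_{ij})$ of linear forms on $T_LP\cong H^0(\omega_C\otimes \eta)^\vee$. Concretely, the entries arise from Kempf's matrix $(x_i\cdot \tau^*x_j)\in H^0(K_{\widetilde{C}})$ by projecting onto the $\tau$-antiinvariant summand $H^0(\omega_C\otimes \eta)$, so that
$$
M_{ij}=\tfrac{1}{2}\bigl(x_i\cdot \tau^*x_j-\tau^*x_i\cdot x_j\bigr).
$$

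Next, I would choose a basis of $H^0(L)$ reflecting the geometry of $L$. Let $s_1,s_2$ be a basis of $H^0(C,M)$, let $\sigma_B\in H^0(\widetilde{C},\mathscr O_{\widetilde{C}}(B))$ be the canonical section vanishing on $B$, and set
$$
x_1=\pi^*s_1\cdot \sigma_B,\qquad x_2=\pi^*s_2\cdot \sigma_B.
$$
These are linearly independent in $H^0(L)$ since $\pi^*\colon H^0(M)\hookrightarrow H^0(\pi^*M)$ is injective; complete to a basis with any $x_3,x_4\in H^0(L)$.

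The main step is then a direct computation showing that the upper-left $2\times 2$ block of $M$ vanishes. The diagonal entries $M_{11}$ and $M_{22}$ are zero by skew-symmetry. For the $(1,2)$ entry the key observation is that $\tau^*\pi^*s_i=\pi^*s_i$, so inside $H^0(L\otimes \tau^*L)=H^0(\pi^*\omega_C)$,
$$
x_1\cdot \tau^*x_2=\pi^*(s_1s_2)\cdot \sigma_B\cdot \tau^*\sigma_B=\tau^*x_1\cdot x_2,
$$
and this symmetry under $\tau$ forces $M_{12}=0$. Thus the upper-left $2\times 2$ block is identically zero.

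The conclusion then follows from the Pfaffian formula in dimension four,
$$
\operatorname{Pf}(M)=M_{12}M_{34}-M_{13}M_{24}+M_{14}M_{23},
$$
which with $M_{12}=0$ becomes $M_{14}M_{23}-M_{13}M_{24}$, a sum of two products of linear forms on $T_LP$, hence a quadric of rank at most four. There is no genuine obstacle beyond setting up Mumford's Pfaffian description carefully; once the $\tau$-invariance $\tau^*\pi^*s_i=\pi^*s_i$ is noted, the vanishing of the upper-left block is essentially a one-line computation, and the rank bound is a formal consequence.
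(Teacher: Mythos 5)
Your proof is correct and is essentially the argument the paper attributes to Smith--Varley: restrict Kempf's matrix $(x_i\cdot\tau^*x_j)$ to $T_LP$ via the Prym--Petri antisymmetrization, note that the sections $\pi^*s_i\cdot\sigma_B$ give $\tau$-symmetric products $\pi^*(s_1s_2)\,\sigma_B\,\tau^*\sigma_B$ whose anti-invariant part vanishes, and read off the rank bound from the $4\times 4$ Pfaffian. The only thing to repair is notation: you reuse $M$ for both the line bundle on $C$ and the skew-symmetric matrix, so rename the latter.
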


A computation of Beauville's shows the following
\begin{lem}[Beauville \cite{bschot} p.191]
If $L\in \Xi$ is an RST-stable double point, and moreover, $L$ is a base point free theta characteristic, then the rank of the quadric tangent cone is exactly four.
\end{lem}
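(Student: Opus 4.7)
My plan is to exploit the $\tau$-symmetry supplied by the theta characteristic hypothesis to make the Pfaffian description of Proposition~\ref{propf} completely explicit, and then to deduce maximality of the rank from base-point-freeness via the base-point-free pencil trick.

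Since $L$ is an RST-stable double point, Mumford's RST gives $h^0(L) = 2\operatorname{mult}_L\Xi = 4$, and the identity $L\otimes\tau^*L = \pi^*\operatorname{Nm}(L) = \pi^*\omega_C = \omega_{\tilde C} = L^{\otimes 2}$ forces $\tau^*L\cong L$. Fix an isomorphism; then $\tau$ acts on $H^0(L)$ with an eigenspace decomposition $H^0(L) = V^+\oplus V^-$ of dimensions $(a,b)$ summing to $4$. Choose a basis $x_1,\ldots,x_4$ adapted to this splitting. When $x_i,x_j$ lie in the same eigenspace, the entry $a_{ij} = \tfrac12(x_i\tau^*x_j - \tau^*x_i\cdot x_j)$ of the Pfaffian matrix $A$ vanishes, so $A$ has zero diagonal blocks of sizes $a\times a$ and $b\times b$. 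For any $(a,b)\neq(2,2)$ the matrix $A$ has rank $\le 2$ pointwise and $\operatorname{Pf}(A)\equiv 0$, contradicting $\operatorname{mult}_L\Xi = 2$; hence $a=b=2$.

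With $x_1,x_2\in V^+$ and $x_3,x_4\in V^-$, a direct expansion gives
\[ \operatorname{Pf}\bigl(A(v)\bigr) \;=\; \ell_{14}(v)\,\ell_{23}(v) - \ell_{13}(v)\,\ell_{24}(v), \]
where $\ell_{ij}\in(T_LP)^\vee$ is the linear form obtained by pairing $v\in T_LP \cong H^0(\omega_C\otimes\eta)^\vee$ against $x_ix_j\in H^0(\omega_C\otimes\eta)\subset H^0(K_{\tilde C})$. This is a rank-$4$ quadric in the four linear forms $\ell_{13},\ell_{14},\ell_{23},\ell_{24}$, so has rank exactly $4$ on $T_LP$ if and only if those four forms are linearly independent; equivalently, if and only if the multiplication $m\colon V^+\otimes V^-\to H^0(\omega_C\otimes\eta)$ is injective.

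The main obstacle is establishing injectivity of $m$. I would argue via the base-point-free pencil trick: for any base-point-free pencil $V\subset H^0(L)$ on $\tilde C$, twisting $0\to L^{-1}\to V\otimes\mathcal O_{\tilde C}\to L\to 0$ by $L$ and passing to cohomology shows that the kernel of $V\otimes H^0(L)\to H^0(L^2)$ is the one-dimensional subspace of $V\otimes V$ spanned by $x_1\otimes x_2 - x_2\otimes x_1$ (for a basis of $V$). Applied with $V = V^+$, this kernel meets $V^+\otimes V^-$ trivially, giving the desired injectivity. The crux of the hypothesis is that $|L|$ base-point-free on $\tilde C$ does not immediately imply $V^+$ or $V^-$ is itself base-point-free: identifying $L\cong \pi^*M$ with $M$ forced to be a theta characteristic on $C$ (since $\operatorname{Nm}(L) = \omega_C$) realizes $V^+\cong H^0(M)$ and $V^-\cong H^0(M\otimes\eta)$, and base-point-freeness of $V^\pm$ on $\tilde C$ is equivalent to base-point-freeness of the corresponding pencil on $C$. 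A common base point of $|M|$ and $|M\otimes\eta|$ is however excluded by base-point-freeness of $|L|$, since all of $H^0(L)$ would then vanish on the corresponding $\tau$-orbit in $\tilde C$. So generically one of $V^\pm$ is already base-point-free and the pencil trick applies; the delicate remaining case---where both $|M|$ and $|M\otimes\eta|$ have (necessarily disjoint) base loci---is the principal technical hurdle and I would handle it by enlarging the pencil used in the trick to a suitable three-dimensional base-point-free subspace of $H^0(L)$, or by a direct analysis of the divisor identity $(x_1) + (s_1) = (x_2) + (s_2)$ on $\tilde C$ forced by a nontrivial element of $\ker m$.
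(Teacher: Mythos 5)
Your reduction is correct and is essentially the right skeleton for this argument: $\tau^*L\cong L$ does force $L\cong\pi^*M$ with $M$ a theta characteristic on $C$, RST-stability (via Theorem \ref{teosvrst}) forces $h^0(M)=h^0(M\otimes\eta)=2$, so the eigenspace splitting is $(2,2)$; the diagonal blocks of the Pfaffian matrix vanish, the Pfaffian collapses to $\ell_{14}\ell_{23}-\ell_{13}\ell_{24}$, and rank four is equivalent to injectivity of the multiplication map $m\colon H^0(M)\otimes H^0(M\otimes\eta)\to H^0(\omega_C\otimes\eta)$. The base-point-free pencil trick then correctly disposes of the case where at least one of $|M|$, $|M\otimes\eta|$ is base point free, since the kernel it computes is $H^0(\eta^{\pm1})=0$.

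The case you defer --- both $|M|$ and $|M\otimes\eta|$ having nonempty (disjoint) base loci $B_1$, $B_2$ --- is, however, not a removable technicality: the injectivity of $m$ can genuinely fail there. Carrying out the divisor analysis you propose, a nonzero element of $\ker m$ forces, after separating fixed and moving parts, $M(-B_1)\cong(M\otimes\eta)(-B_2)$, i.e.\ $\eta\cong\mathscr O_C(B_2-B_1)$; conversely any such coincidence of moving parts produces a kernel element. This configuration occurs: take $C$ hyperelliptic of genus $5$ with hyperelliptic class $H$ and distinct Weierstrass points $p_1,p_2,q_1,q_2$, and set $M=\mathscr O_C(H+p_1+p_2)$, $\eta=\mathscr O_C(p_1+p_2-q_1-q_2)$. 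Then $M^{\otimes2}\cong\omega_C$, $h^0(M)=h^0(M\otimes\eta)=2$, the base loci are $p_1+p_2$ and $q_1+q_2$, $L=\pi^*M$ is a base-point-free theta characteristic on $\widetilde C$ and (checking the Smith--Varley criterion) an RST-stable double point, yet $m$ has a one-dimensional kernel and the tangent cone is the rank-three quadric $\ell_{14}^2-\ell_{13}\ell_{24}$ --- consistent with the fact that this Prym is a hyperelliptic Jacobian of dimension four with a vanishing theta null. Consequently neither of your proposed repairs can succeed (no correct argument proves a false statement in that case): the lemma requires an additional hypothesis, implicit in Beauville's original context, excluding $\eta\cong\mathscr O_C(B_2-B_1)$ --- in particular excluding hyperelliptic $C$ --- and the deferred case must be ruled out by hypothesis rather than argued away.
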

 
 We also point out that Kempf's theorem implies that at an RST-stable point $L\in \Xi$, as a set,
$$
C_L\Xi=\bigcup_{D\in |L|} \left(
\overline{\phi_{K_{\widetilde C}}(D)} \cap \mathbb PH^0(K_C\otimes \eta)^\vee\right).
$$

It remains to give a good  generalization of Kempf's theorem to the case of RST-exceptional points of $\Xi$.  Ideally this would yield a nice description of the rank of the quadric tangent cone: 

\begin{que}\label{queprym}
Suppose that $L\in \Xi$ is an RST-exceptional double point; i.e. $L\cong \pi^*M \otimes \mathscr O_{\widetilde{C}}(B)$,  where $M$ is a line bundle on $C$ with $h^0(M)=h^0(L)=2$, and $B\ge 0$ is an effective divisor on $\widetilde{C}$ such that
$\tau^*B \cap B=\emptyset$.
What is the rank of $C_L\Xi$?
\end{que}

In \cite{casa} Section 6, the author has given a determinantal description of the tangent cones at such double points, but so far this has not yielded information about the rank.

\subsection{Secants to the Prym-canonical curve, tangent cones to the theta divisor, and the constructive Torelli theorem}
Consider the Prym canonical map $\phi_{\eta}:C\to \mathbb PH^0(K_C\otimes \eta)^\vee$ induced by the linear system
$|K_C\otimes \eta|$.  Call the image the Prym canonical model of $C$.  Since the tangent space to the Prym variety at a point can be identified with
$H^0(K_C\otimes\eta)^\vee$, it follows that the projectivized tangent cones to $\Xi$ lie in the same space as the secant varieties to the Prym canonical model.  It was first shown by Tjurin \cite[Lemma 2.3, p.963]{tjg,tjgc} that if $L$ is an RST-stable double point on $\Xi$, then $\phi_\eta(C)\subseteq \mathbb PC_L\Xi$.

As with Jacobians, one can ask whether the Prym canonical model can be recovered from the  double points of the theta divisor.  For Jacobians the key points were Petri's theorem, and Green's theorem.  Analogues of these theorems for Prym varieties have been proven by a number of authors in different contexts.  We refer the reader to Smith-Varley \cite{svtorelli} for a complete discussion, and state below a generic version of the statement due to Debarre.

\begin{teo}[Debarre \cite{dtorelli}]
For a generic Prym variety of dimension at least seven, the Prym canonical curve is cut out as a scheme by the (rank six) tangent cones to the stable double points of the theta divisor.
\end{teo}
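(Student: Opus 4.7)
The plan is to adapt Green's approach to the Jacobian case, using the Pfaffian description of tangent cones in place of the determinantal one. Write $I_{2,\eta}\subseteq \operatorname{Sym}^2 H^0(K_C\otimes\eta)$ for the space of quadrics through the Prym canonical curve $\phi_\eta(C)\subseteq \mathbb PH^0(K_C\otimes\eta)^\vee$, and $I_2(\Xi)$ for the subspace spanned by the projective tangent cones $C_L\Xi$ as $L$ ranges over RST-stable double points of $\Xi$. Tjurin's lemma gives the inclusion $I_2(\Xi)\subseteq I_{2,\eta}$, and the theorem follows from two assertions: equality $I_2(\Xi)=I_{2,\eta}$, and the statement that $I_{2,\eta}$ already cuts out $\phi_\eta(C)$ scheme-theoretically in $\mathbb P^{g-2}$.

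For the scheme-theoretic assertion I would invoke the Prym-canonical analogue of Petri's theorem: for generic $(C,\eta)$ with $g(C)\ge 7$, the Prym-canonical embedding is projectively normal and its homogeneous ideal is generated by quadrics. Granting this, it remains to prove $I_2(\Xi)=I_{2,\eta}$. Here I would combine three ingredients already surveyed. First, Welters' injectivity of the Prym-Petri map on a generic Prym, together with Mumford's proposition, forces the tangent cones at RST-stable double points to be quadrics of rank exactly six. Second, Welters' Prym-Brill-Noether theory guarantees that for generic $(C,\eta)$ the set of such stable double points forms a family of the expected dimension $g-5$, supplying many candidate quadrics. Third, Theorem \ref{teosvrst} isolates the RST-exceptional double points and so lets us safely restrict attention to the stable ones.

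The main obstacle is the spanning statement: showing the Pfaffian tangent cones actually fill $I_{2,\eta}$ rather than merely sitting inside it. Since the conclusion is generic, I would proceed by specialization and semicontinuity. The plan is to exhibit a single double cover in $\mathcal R_{g+1}$ on which the rank-six Pfaffian tangent cones may be written down explicitly in a basis adapted to the Prym-canonical embedding and verified to be linearly independent, with count matching the dimension $h^0(\mathbb P^{g-2},\mathcal I_{\phi_\eta(C)}(2))$ computed from Riemann-Roch on $\phi_\eta(C)$. Semicontinuity in the family $\mathcal R_{g+1}$ then propagates both the linear independence and the dimension count to an open dense locus, upgrading the Tjurin inclusion to equality. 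The hardest step is the explicit independence computation: in contrast to the Jacobian case, where $2\times 2$ minors impose a very rigid structure used in Green's proof, Pfaffians of skew $4\times 4$ matrices are less constrained, so establishing independence requires care, and one must avoid contributions from RST-exceptional double points (classified via Theorem \ref{teosvrst}) as well as from Prym-Petri degenerate loci. This is essentially why the theorem must be stated only for generic Pryms and why the hypothesis on dimension at least seven appears.
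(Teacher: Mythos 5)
This is a survey: the theorem is quoted from Debarre's paper and no proof of it appears here, so there is no in-paper argument to compare against. That said, your architecture --- Tjurin's inclusion of the Prym-canonical curve in the stable quadric tangent cones, a Petri-type statement that the quadrics through $\phi_\eta(C)$ cut it out scheme-theoretically, and a Green-type spanning statement $I_2(\Xi)=I_{2,\eta}$ --- is exactly the template the survey itself indicates (``For Jacobians the key points were Petri's theorem, and Green's theorem. Analogues of these theorems for Prym varieties have been proven\ldots''), so the overall shape is right.

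However, there are genuine gaps. The spanning statement is not a lemma to be checked at the end: it is the entire content of Debarre's theorem, and your plan for it (exhibit one special cover, write the Pfaffian cones in coordinates, verify linear independence, propagate by semicontinuity) is never carried out --- no candidate cover is named and no mechanism is offered for why independence should hold there. The bookkeeping behind the plan is also off: stable double points of $\Xi$ have $h^0(L)=4$, so the relevant Prym--Brill--Noether locus has expected dimension $(g-1)-\binom{4}{2}=g-7$, not $g-5$; as the survey records, for a generic Prym this locus is irreducible of dimension $g-7$, hence positive-dimensional once the Prym has dimension at least $8$. So ``a finite list of quadrics whose count matches $\dim I_{2,\eta}$'' is not the right shape of argument; one must control the span of a positive-dimensional family of Pfaffian quadrics, and semicontinuity of that span requires knowing that the family of stable singular points itself varies well (irreducibility, no jumping) with $(C,\eta)$, which you do not address. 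In addition, the Prym-canonical Petri statement (quadric generation of the ideal of $\phi_\eta(C)$ for generic $(C,\eta)$) is itself a substantial theorem that you invoke as a black box. Finally, note that equality of spans is stronger than what the statement requires: one only needs the common zero scheme of the stable tangent cones to equal $\phi_\eta(C)$, so an argument aimed directly at the base locus of this family of quadrics (which is how such results are typically established) would be both weaker to prove and sufficient.
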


This gives a constructive proof of a special case of the generic Torelli theorem for Pryms:
\begin{teo} [Friedman-Smith \cite{fs}] 
Suppose that  $g\ge 7$. Then the Prym map $\mathscr P:\mathcal R_g\to \mathcal A_{g-1}$ is generically injective.
\end{teo}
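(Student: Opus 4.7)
The plan is to prove generic injectivity by producing, from a generic $(P,\Xi) \in \bar{P}_{g-1}$, an explicit reconstruction of the underlying étale double cover $\pi : \widetilde C \to C$, thereby showing the fiber of $\mathscr P$ over the image of a generic point is a single point.

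First I would fix a generic $[\pi : \widetilde C \to C] \in \mathcal R_g$ with associated Prym $(P,\Xi)$ and $\eta \in \operatorname{Pic}^0(C)[2]$, and assume $g \ge 8$ so that $\dim P \ge 7$. The key observation is that the singular locus of $\Xi$, the notion of an RST-stable double point (intrinsic via the condition $T_L P \nsubseteq C_L \widetilde\Theta$), and the ranks of the quadric tangent cones at such points are all invariants of $(P,\Xi)$ alone. By Welters' result, the Prym-Petri map is injective for generic $\pi$, so the stable double points of $\Xi$ have rank six quadric tangent cones. Debarre's theorem then identifies the scheme-theoretic intersection of these rank six quadrics inside $\mathbb P T_0 P \cong \mathbb P H^0(K_C \otimes \eta)^\vee$ with the Prym canonical image $\phi_\eta(C)$, reconstructing this subscheme of $\mathbb P T_0 P$ from $(P,\Xi)$ alone.

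Next I would use that for a generic double cover the Prym canonical map $\phi_\eta$ is an embedding, so the abstract curve $C$ is recovered as $\phi_\eta(C)$. The hyperplane bundle on the ambient $\mathbb P T_0 P$ restricts to $K_C \otimes \eta$, and since $C$ (hence $K_C$) is now known, one recovers $\eta = (K_C \otimes \eta) \otimes K_C^{\vee}$ as a $2$-torsion line bundle on $C$. The pair $(C,\eta)$ determines the étale double cover $\widetilde C \to C$ uniquely, completing the reconstruction for $g \ge 8$.

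Two obstacles remain. A routine but essential point is to verify that all the genericity hypotheses---existence of RST-stable double points on $\Xi$, injectivity of the Prym-Petri map, embeddedness of $\phi_\eta$, and the hypothesis of Debarre's theorem---cut out a common nonempty open subset of $\mathcal R_g$; this reduces to intersecting finitely many Zariski-open conditions each already known to be nonempty. The genuinely hard part is the boundary case $g = 7$, where $\dim P = 6$ lies just outside the range of Debarre's theorem: I would expect to need either to sharpen the constructive recovery one genus lower, or---as in Friedman and Smith's original argument---to degenerate to the boundary of $\mathcal R_7$ and analyze the resulting limit Prym and its theta divisor to conclude generic injectivity by a separate limiting argument.
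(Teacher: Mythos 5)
Your plan is essentially the route the paper itself indicates: Debarre's theorem recovers the Prym-canonical curve $\phi_\eta(C)\subset \mathbb PH^0(K_C\otimes\eta)^\vee\cong \mathbb PT_0P$ as the scheme cut out by the rank-six quadric tangent cones at the stable double points of $\Xi$, and from $\phi_\eta(C)$ one reads off $(C,\eta)$ and hence the \'etale double cover. The paper, however, is careful to say that this yields ``a constructive proof of a \emph{special case}'' of the theorem --- precisely because Debarre's statement requires $\dim P\ge 7$, i.e.\ $g\ge 8$. Two remarks on your write-up. First, a minor point: the stable/exceptional dichotomy is \emph{not} obviously intrinsic to $(P,\Xi)$, since the condition $T_LP\nsubseteq C_L\widetilde\Theta$ refers to the ambient Jacobian $J\widetilde C$; this is harmlessly repaired because for a generic Prym the locus of RST-exceptional singularities is empty (Debarre), so one may simply take the quadric tangent cones at \emph{all} double points of $\Xi$, which is intrinsic. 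Second, and more seriously: the theorem as stated includes $g=7$, where $\dim P=6$ and the constructive machinery does not apply; you correctly flag this but do not fill it, so as a proof of the stated theorem your argument has a genuine gap at $g=7$. Friedman and Smith's actual proof is not the constructive one at all --- it proceeds by degenerating to the boundary of $\mathcal R_g$ (Wirtinger-type covers of reducible curves) and invoking the Torelli theorem for the base curve on the limit Pryms, an argument that works uniformly for $g\ge 7$. So your proposal proves the $g\ge 8$ case by the paper's preferred constructive method, correctly identifies what is missing at $g=7$, but would need the separate degeneration (or some sharpening of the constructive Torelli in dimension six) to establish the theorem as stated.
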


For dimension reasons, it is easy to see that the Prym map is not injective for $g\le 6$, and it is well known that the map is dominant in these cases.
That the Prym map is never injective  can be seen from Mumford's  analysis  \cite{mprym} of the Pryms associated to hyperelliptic curves, or Donagi's \cite{dtet} tetragonal construction. 
The question remains to show exactly when the map is injective.  
There are a number of partial results in this direction, and we refer the reader to the survey Smith-Varley \cite{svtorelli} for a complete discussion of the current situation.
We state the main conjecture here for reference; this is due to Donagi \cite[Conjecture 4.1]{dtet}, with  modifications by Verra \cite{verra} and Lange-Sernesi \cite{langeser}:

\begin{con}[Donagi]
Let $U=\{(\widetilde{C},C)\in \mathcal R_g:\operatorname{Cliff}(C)\ge 3\}$.
The restricted Prym map 
$\mathscr P|_U:U\to \mathcal A_{g-1}$ is injective.
\end{con}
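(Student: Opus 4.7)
The plan is to upgrade Debarre's generic constructive Torelli theorem to the full conjectural range by converting the hypothesis $\operatorname{Cliff}(C)\ge 3$ into the scheme-theoretic statement that it demands. Given $(P,\Xi)=\mathscr P(\widetilde C,C)$ with $(\widetilde C,C)\in U$, the strategy is to recover $\phi_\eta(C)\subset \mathbb PH^0(K_C\otimes\eta)^\vee$ from the quadric tangent cones at the double points of $\Xi$, and then reconstruct the cover from that image.

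First I would use the full Riemann singularity theorem for Pryms (Theorem \ref{teosvrst}) to control which double points of $\Xi$ are RST-stable. An RST-exceptional double point has the form $L\cong \pi^*M\otimes \mathscr O_{\widetilde C}(B)$ with $h^0(L)=h^0(M)=2$, $\operatorname{Nm} L=\omega_C$, and $\tau^*B\cap B=\emptyset$; a key subclaim is that such a decomposition forces a pencil $M$ on $C$ with $\operatorname{Cliff}(M)\le 2$, so the hypothesis $\operatorname{Cliff}(C)\ge 3$ excludes these configurations entirely. Together with Welters' Prym-Petri injectivity (which must be extended from the generic locus to the full Clifford-index range, in analogy with the extension of Petri's theorem), Mumford's Pfaffian criterion then produces a family of rank-six quadric tangent cones, all containing the Prym canonical curve.

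Second, prove a non-generic analogue of Green's theorem: the ideal generated by these rank-six cones cuts out $\phi_\eta(C)$ scheme theoretically whenever $\operatorname{Cliff}(C)\ge 3$. The Clifford hypothesis here plays the role that ``non-trigonal, non-plane-quintic'' plays in Green's original theorem. I would attack this via Koszul cohomology of the paracanonical system $|K_C\otimes \eta|$, using Prym-Petri injectivity to annihilate the relevant syzygy group and adapting Voisin-type $K_{p,1}$ vanishing arguments. With $\phi_\eta(C)$ in hand, the double cover is reconstructed as the fibre product of the projection $\phi_{K_{\widetilde C}}(\widetilde C)\to \phi_\eta(C)$ coming from the restriction $H^0(K_{\widetilde C})\twoheadrightarrow H^0(K_C\otimes \eta)$, and $\eta$ is singled out as the unique two-torsion class in $\operatorname{Pic}(C)$ becoming trivial on $\widetilde C$.

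The main obstacle is the second step. Donagi's tetragonal construction, Recillas' trigonal construction, and Verra's variants all exhibit non-isomorphic covers with isomorphic Pryms, and in every known case the base curve has $\operatorname{Cliff}(C)\le 2$; the bound is therefore sharp, and any proof must genuinely extract from $\operatorname{Cliff}(C)\ge 3$ a scheme-theoretic statement on the span of the tangent-cone quadrics, rather than merely a dimension count. A successful strategy may need to fuse the singularity analysis above with the global Pfaffian theory of Smith--Varley \cite{svpfaf}, or with rigidity phenomena specific to high-Clifford-index curves; this is presumably why the conjecture has resisted a complete proof despite the extensive partial results surveyed in \cite{svtorelli}.
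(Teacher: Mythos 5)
This statement is a \emph{conjecture} in the paper --- Donagi's conjecture, with the Clifford-index refinement due to Verra and Lange--Sernesi --- and the paper offers no proof of it; it is presented explicitly as the main open problem on the injectivity of the Prym map. What you have written is a research program, not a proof, and you concede as much in your final paragraph. The two load-bearing steps are exactly the parts that remain open: (i) extending Prym--Petri injectivity from the generic cover (Welters' theorem, which the paper cites only for a \emph{general} curve) to every cover with $\operatorname{Cliff}(C)\ge 3$, and (ii) proving a non-generic analogue of Green's theorem asserting that the quadric tangent cones at the stable double points of $\Xi$ cut out $\phi_\eta(C)$ scheme-theoretically under the same hypothesis. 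Debarre's constructive Torelli theorem, which you propose to ``upgrade,'' is itself only a generic statement in dimension at least seven, and no argument is given for why the Clifford-index hypothesis supplies the needed Koszul-cohomology vanishing; asserting that one would ``adapt Voisin-type $K_{p,1}$ arguments'' names a technique without producing the estimate.

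There is also a concrete error in your first step. For an RST-exceptional double point one has $L\cong\pi^*M\otimes\mathscr O_{\widetilde C}(B)$ with $h^0(M)=2$ and $2\deg M+\deg B=2g-2$, so $M$ is a pencil of degree up to $g-1$ and $\operatorname{Cliff}(M)=\deg M-2$ can easily exceed $2$. Thus the hypothesis $\operatorname{Cliff}(C)\ge 3$ does \emph{not}, by the computation you sketch, exclude RST-exceptional double points; the known classifications of exceptional singularities (Mumford, Smith--Varley, Theorem~\ref{teosvrst}) are considerably more delicate and do not reduce to this Clifford bound. Since the conjecture is open, the honest conclusion is that your proposal identifies the right circle of ideas --- the constructive Torelli approach via tangent cones, the tetragonal/trigonal constructions as the source of the sharpness of the bound --- but does not close any of the gaps that make the statement a conjecture rather than a theorem.
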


Recall the definition of the Clifford index:
$$
\operatorname{Cliff}(C):=\min\{\deg(D)-2\dim|D|: h^0(D)\ge 2,\ h^1(D)\ge 2\}.
$$

We now change direction.
In analogy with the case of Jacobians, and in order to generalize Tjurin's result for RST-stable double points, it is natural to ask for 
the exact relation between the secant varieties of the Prym canonical model, and the tangent cones to $\Xi$.
Smith and Varley observed in \cite[p.241, line 8]{svtorelli} that if $(P,\Xi)$ is a Jacobian, then at a general exceptional double point $L$, $\phi_\eta(C)\nsubseteq C_L\Xi$.  
A complete description of the secant varieties generalizing this observation was given by the author.

\begin{teo}[\cite{casa}]
Suppose that $L\in \operatorname{Sing} \Xi$.  The $k$-secant variety to $\phi_\eta(C)$ is contained in $C_L\Xi$ if and only if $h^0(L)\ge 2(k+2)$.
\end{teo}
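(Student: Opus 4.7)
The plan is to adapt the proof of Proposition 1.3 to the Prym setting by working on the \'etale double cover $\pi \colon \widetilde{C} \to C$ and exploiting the $\tau$-symmetry, treating RST-stable and RST-exceptional points separately. The key observation is that on $T_L P \subset T_L J\widetilde{C}$, choosing $y_j = \tau^* x_j$ in Kempf's basis renders the Petri matrix skew-symmetric, so its ranks drop in even steps and the relevant condition involves kernels of dimension at least two rather than one.

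At an RST-stable $L$, Proposition 2.6 (Mumford) gives that $C_L\Xi$ is the vanishing locus of the Pfaffian of the skew-symmetric matrix associated to the Prym--Petri pairing $\mu(s \wedge t) = \tfrac{1}{2}(s\cdot \tau^*t - \tau^* s\cdot t)$; equivalently $[\alpha] \in C_L\Xi$ iff $\cup \alpha \colon H^0(L) \to H^1(L)$ has kernel of dimension at least two. For general $p_0, \ldots, p_k \in C$ with $\pi^{-1}(p_i) = \{\tilde p_{i,1}, \tilde p_{i,2}\}$, the Prym canonical point $\phi_\eta(p_i)$ corresponds, in the anti-invariant subspace of $H^1(\widetilde{C}, \mathscr O_{\widetilde C})$, to the class $\delta(\tilde p_{i,1}) - \delta(\tilde p_{i,2})$, and a general element of the spanned $k$-plane has the form $\alpha_c = \sum_i c_i (\delta(\tilde p_{i,1}) - \delta(\tilde p_{i,2}))$. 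The cup product $\cup \alpha_c$ factors through $H^0(L(D)|_D) \xrightarrow{\partial} H^1(L)$ where $D = \sum_i \pi^{-1}(p_i)$ has degree $2(k+1)$; for general $p_i$ the coboundary $\partial$ is injective, and a direct computation gives $\ker(\cup \alpha_c) = H^0(L(-D))$ whenever all $c_i \neq 0$. The containment of the $k$-secant in $C_L\Xi$ thus reduces to the condition $h^0(L(-D)) \geq 2$ for generic $D$ of this form, and since imposing a generic fibre $\pi^{-1}(p)$ drops $h^0$ by two, this is equivalent to $h^0(L) \geq 2(k+1) + 2 = 2(k+2)$.

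At an RST-exceptional point the first-order argument fails: $T_L P \subseteq C_L \widetilde\Theta$, so every cup product $\cup \alpha$ has nontrivial kernel, and the skew-symmetric Pfaffian vanishes identically on $T_L P$. The main obstacle is therefore to recover the correct tangent cone condition from a second-order analysis. I would invoke the determinantal description of $C_L \Xi$ at RST-exceptional points from \cite{casa}, Section 6, which extracts $C_L\Xi$ from the next term in the Taylor expansion of the local equation of $\widetilde\Theta|_P = 2\Xi$. Combining this with the Smith--Varley form $L \cong \pi^* M \otimes \mathscr{O}_{\widetilde C}(B)$ from Theorem 2.2 and the induced decomposition of $H^0(L)$, the na\"ive criterion $h^0(L(-D)) \geq 2$ is sharpened by additional constraints reflecting the failure of the fibres $\pi^{-1}(p)$ to impose independent conditions on $|L|$, so that the uniform numerical threshold $h^0(L) \geq 2(k+2)$ emerges in both cases. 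Making this second-order correction precise is the main technical content of the proof.
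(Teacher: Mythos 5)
Your treatment of the RST-stable case is correct and is exactly the intended generalization of the coboundary argument given above for Jacobians: for $\alpha\in T_LP$ the matrix of $\cup\alpha$ is skew-symmetric, so at a stable point $C_L\Xi$ is cut out by its Pfaffian and membership of $[\alpha]$ is equivalent to $\dim\ker(\cup\alpha)\ge 2$; factoring $\cup\alpha_c$ through $H^0(L(D)|_D)$ with $D=\sum_i\pi^{-1}(p_i)$ then gives $\ker(\cup\alpha_c)=H^0(L(-D))$ for general $p_i$ with all $c_i\ne 0$, and the count $h^0(L)-2(k+1)\ge 2$ yields the stated threshold. Two points you should still make explicit there: that at a stable point a general fibre $\pi^{-1}(p)$ really does impose two independent conditions on $|L|$ (the two points of a fibre are not independent general points of $\widetilde C$), and that when $2(k+1)>h^0(L)$ the coboundary $\partial$ fails to be injective, so the ``only if'' direction should be run on a lower secant variety and concluded via the nesting of secant varieties.

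The genuine gap is the RST-exceptional case, which you describe but do not prove. There the entire mechanism above is unavailable: since $T_LP\subseteq C_L\widetilde\Theta$, the restricted determinant and its Pfaffian vanish identically and every $\cup\alpha$ with $\alpha\in T_LP$ has kernel of dimension at least two, so the first-order criterion detects nothing; moreover, writing $L\cong\pi^*M\otimes \mathscr O_{\widetilde C}(B)$ as in Theorem \ref{teosvrst}, the sections coming from $H^0(M)$ vanish at $\tilde p$ if and only if they vanish at $\tau^*\tilde p$, so a general fibre imposes only one condition on that part of $|L|$ and the naive count $h^0(L(-D))$ does not by itself produce the uniform threshold $2(k+2)$. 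Recovering the correct membership criterion requires identifying $C_L\Xi$ with (half of) the first nonvanishing, higher-order term of the Taylor expansion of $\widetilde\Theta|_P$ at $L$ and redoing the secant computation against that equation; this second-order analysis is precisely the content of \cite{casa}, and deferring it as ``the main technical content'' leaves unproved the hardest and most interesting half of the statement --- including the base case $k=0$, namely the Smith--Varley observation that $\phi_\eta(C)\nsubseteq C_L\Xi$ at an exceptional double point.
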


In particular, the Prym canonical model is contained in a quadric tangent cone to $\Xi$ if and only if the double point is RST-stable.

\subsection{The geometric Schottky problem}
For dimension reasons, it is easy to see that the closure of the image of the Prym map, $\bar{P}_g$, is a proper subvariety of $\mathcal A_{g-1}$ for all $g\ge 7$, and thus for $g\ge 7$ we can ask the Prym Schottky problem; i.e. to characterize Prym varieties among all ppavs.  We describe here an approach similar in spirit to Andreotti and Mayer's approach to the Schottky problem, using the dimension of the singular locus of the theta divisor to give a partial characterization.

To  motivate the statements, we cite several more results on the dimension of the singular locus of the Prym theta divisor.  In particular, Welters \cite{wgp} and Debarre \cite{dtorelli} have shown that the singular locus of $\Xi$ has dimension at least $g-7$.  Together with work of Debarre \cite{dprym} it follows that for a generic Prym variety, the locus of RST-exceptional singularities is empty, while the locus of RST-stable singularities is irreducible of dimension $g-7$ for $g\ge 8$, reduced of dimension zero for $g=7$, and empty if $g\le 6$.  
We refer the reader to Smith-Varley \cite{svtorelli,svpac} for more details, and further references describing the singular locus.
In analogy with Andreotti and Mayer's approach to the Schottky problem, there is the following theorem of Debarre's.
\begin{teo}[Debarre \cite{dprym}] For $g\ge 8$, the Prym locus $\bar{P}_{g}\subseteq \mathcal  A_{g-1}$
is an irreducible component of $N_{g-7}$.
\end{teo}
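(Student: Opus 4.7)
The plan is to verify separately that $\bar{P}_g \subseteq N_{g-7}$, and that it is an entire irreducible component.

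\textbf{Inclusion.} For every Prym variety $(P,\Xi)$ of dimension $g-1$, the results of Welters and Debarre cited just above the theorem give $\dim \operatorname{Sing}\Xi \ge g-7$. Since $N_{g-7}$ is closed in $\mathcal A_{g-1}$, taking the closure of the image of the Prym map immediately yields $\bar{P}_g \subseteq N_{g-7}$.

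\textbf{Reduction to a tangent-space bound.} Since $\mathscr P : \mathcal R_g \to \mathcal A_{g-1}$ is generically injective for $g \ge 7$ by Friedman-Smith, $\dim \bar{P}_g = 3g-3$. To conclude that $\bar{P}_g$ is an irreducible component of $N_{g-7}$, it suffices to show that at a very general $(P,\Xi) \in \bar{P}_g$ the Zariski tangent space $T_{(P,\Xi)} N_{g-7}$ has dimension at most $3g-3$: any irreducible subvariety of $N_{g-7}$ passing through such a point and containing $\bar{P}_g$ would then have dimension at most $3g-3 = \dim \bar{P}_g$, hence equal $\bar{P}_g$.

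\textbf{Infinitesimal setup.} At a very general $(P,\Xi) \in \bar{P}_g$, Welters' and Debarre's results guarantee that $\operatorname{Sing}\Xi$ is irreducible of dimension exactly $g-7$, consists entirely of RST-stable double points, and (using the generic injectivity of the Prym-Petri map cited in the excerpt) has rank-six quadric tangent cones $C_L\Xi$. Identify $T_{(P,\Xi)} \mathcal A_{g-1}$ with $\operatorname{Sym}^2 H^0(K_C \otimes \eta)^\vee$, thinking of a tangent vector $v$ as a quadratic form $Q_v$ on $\mathbb P H^0(K_C \otimes \eta)$. Via the heat equation for theta functions, the condition that the first-order deformation of $(P,\Xi)$ corresponding to $v$ preserves a point $L$ as a singular point of the deformed theta divisor becomes an explicit linear condition on $Q_v$ in terms of the quadric $C_L\Xi$; letting $L$ range over the irreducible $(g-7)$-dimensional family $\operatorname{Sing}\Xi$ produces a linear system of equations on $v$.

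\textbf{Main obstacle.} The heart of the argument is to show that the solution space of this linear system is precisely $T_{(P,\Xi)}\bar{P}_g$, i.e.\ the kernel of the Prym-Petri multiplication $\operatorname{Sym}^2 H^0(K_C\otimes\eta) \to H^0(K_C^2)$. Since every tangent cone $C_L\Xi$ contains the Prym-canonical curve $\phi_\eta(C)$ by Tjurin's theorem, the vanishing conditions force $Q_v$ to vanish on $\phi_\eta(C)$; Debarre's scheme-theoretic constructive Prym-Torelli (the theorem cited just before) — which asserts that the rank-six tangent cones cut out $\phi_\eta(C)$ scheme-theoretically — then identifies this vanishing with the Prym-Petri kernel, completing the argument. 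The difficulty, and the reason for the hypothesis $g \ge 8$, is precisely this final scheme-theoretic matching: one must extract enough independent linear conditions from the positive-dimensional family $\operatorname{Sing}\Xi$ (which only becomes positive-dimensional once $g\ge 8$) to collapse $T_{(P,\Xi)} N_{g-7}$ all the way down to $T_{(P,\Xi)}\bar{P}_g$, not merely to contain it.
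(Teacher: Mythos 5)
This is a survey; the paper states Debarre's theorem with a citation and gives no proof, so there is nothing internal to compare against. Your outline does follow the strategy that is actually used in the literature (the Andreotti--Mayer heat-equation argument, adapted to Pryms, with Debarre's analogue of Green's theorem as the key input), and the numerology is right: $\dim \bar{P}_g = 3g-3$ by Friedman--Smith, and the annihilator of the full space $I_2$ of quadrics through $\phi_\eta(C)$ in $\operatorname{Sym}^2 H^0(K_C\otimes\eta)^\vee$ has dimension $\binom{g}{2} - \dim I_2 = 3g-3$ once the multiplication map $\operatorname{Sym}^2 H^0(K_C\otimes\eta)\to H^0(K_C^{\otimes 2})$ is surjective.

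There are, however, two genuine gaps. First, the containment $T_{(P,\Xi)}N_{g-7}\subseteq \bigcap_{L\in\operatorname{Sing}\Xi}\operatorname{Ann}(C_L\Xi)$ is asserted, not proved: a first-order deformation of $(P,\Xi)$ inside $N_{g-7}$ does not obviously preserve each individual singular point $L$, so ``letting $L$ range over $\operatorname{Sing}\Xi$'' does not automatically impose all those linear conditions on $v$. The standard fix is to work on the incidence variety $\{(A,\Theta,z): z\in\operatorname{Sing}\Theta\}$ over $N_{g-7}$, where the heat equation gives $d\pi(T)\subseteq\operatorname{Ann}(q_z)$ fiberwise, and then use a dimension count and generic smoothness to descend to $T N_{g-7}$; you need to say this. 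Second, ``the rank-six tangent cones cut out $\phi_\eta(C)$ scheme-theoretically'' is strictly weaker than what the dimension count requires, namely that the quadrics $C_L\Xi$, $L\in\operatorname{Sing}\Xi$, \emph{span} the whole vector space $I_2$ of quadrics containing $\phi_\eta(C)$: a proper subspace of $I_2$ can still cut out the curve as a scheme, and then the annihilator would be too big. (Debarre proves the spanning statement, which is the true analogue of Green's theorem; the survey's phrasing of the constructive Torelli result does not literally suffice.) Finally, a smaller slip: $Q_v$ lives on $\mathbb{P}H^0(K_C\otimes\eta)$ while $\phi_\eta(C)$ and the tangent cones live in the dual space $\mathbb{P}H^0(K_C\otimes\eta)^\vee$, so $Q_v$ cannot ``vanish on $\phi_\eta(C)$''; the correct conclusion is that $v$ annihilates $I_2$ and hence lies in the image of $H^0(K_C^{\otimes 2})^\vee$, which is identified with $T_{(P,\Xi)}\bar{P}_g$ via the codifferential of the Prym map.
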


Note for $g=7$, the generic Prym variety has a theta divisor which is singular in dimension zero; i.e. $P_7\subseteq N_0\subseteq \mathcal  A_6$.  On the other hand,  $\operatorname{codim} P_7=3$, while $N_0$ is a divisor (see Section \ref{secaml}), and thus $P_7$ is not an irreducible component of $N_0$ in $\mathcal A_6$.
We point out that recently a complete characterization of Prym varieties has been given by Grushevsky and Krichever \cite{gk} in terms of  quadrisecant planes of the associated Kummer variety.

We close this section by describing one more approach to the Schottky problem for Jacobians, which is based on the study of linear series consisting of singular $2\Theta$-divisors, and for which there are some recent results in the case of Prym varieties.
To begin, consider a ppav $(A,\Theta)$ of dimension at least four, with $\Theta$ symmetric.  One sets $\Gamma_{00}\subset H^0(A,2\Theta)$ to be the subspace of sections vanishing to order four at the origin and  $V_{00}$ to be the base locus of the associated linear series.  
Van Geemen and van der Geer \cite{vgvdg} have made the following conjecture, which we phrase following Donagi \cite{dschot}:

\begin{con}[van Geemen and van der Geer \cite{vgvdg}] Let $(A,\Theta)$ be an indecomposable ppav of dimension $g$.
\begin{enumerate}
\item If $(A,\Theta)$ is the Jacobian of a smooth curve $C$, then as a set $V_{00}$ is  equal to the difference surface 
$$C-C:=\{\mathscr O_C(p-q):p,q\in C\}.$$

\item If $(A,\Theta)$ is not a Jacobian,  then as a set $V_{00}=0$.
\end{enumerate}
\end{con}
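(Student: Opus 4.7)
The conjecture splits cleanly into two statements of quite different character: part (1) is a concrete geometric identity on Jacobians, while part (2) is a Schottky-type characterization of Jacobians via non-vanishing of a base locus. My plan is to treat the two parts separately, using the singularity theory reviewed above as the primary tool in both. The key elementary observation that drives both parts is that, for any symmetric $\Theta$ and any $a \in A$, the divisor $\Theta_a + \Theta_{-a}$ lies in $|2\Theta|$ and vanishes at the origin to order $\operatorname{mult}_a\Theta + \operatorname{mult}_{-a}\Theta = 2\operatorname{mult}_a\Theta$; hence each $a \in \operatorname{Sing}_2 \Theta$ contributes a section to $\Gamma_{00}$.

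For part (1), the plan is as follows. After translating $\Theta$ to $W_{g-1}$, Riemann's theorem identifies $\operatorname{Sing}_k\Theta = W^{k-1}_{g-1}$, so the sections above produce an explicit family in $\Gamma_{00}$ parametrized by $W^1_{g-1}$. The next step is to verify, via Riemann's bilinear addition formula (or equivalently by tracking how the Abel--Jacobi image of $C$ sits inside $JC$), that $\Theta_a + \Theta_{-a}$ vanishes on $p-q$ whenever $a \in W^1_{g-1}$ and $p,q \in C$; geometrically, translating $\Theta$ by a degree-zero boundary $p-q$ preserves the incidence conditions cut out by line bundles in $W^1_{g-1}$. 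This yields $C - C \subseteq V_{00}$. For the reverse inclusion, I would use a spanning/dimension argument: show that the divisors $\Theta_a + \Theta_{-a}$ with $a \in W^1_{g-1}$ span all of $\Gamma_{00}$ by identifying $\Gamma_{00}$ with the cokernel of a natural map controlled by the $4$-jet at the origin, and then use Brill--Noether theory (Martens' theorem together with the estimate $\dim \operatorname{Sing}\Theta \ge g-4$) to produce enough sections cutting out $C-C$ set-theoretically.

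Part (2) is the main obstacle and is essentially a new instance of the Schottky problem. The plan is to argue by contradiction: assume $(A,\Theta)$ is indecomposable and non-Jacobian, but $V_{00}$ contains a nonzero point $a$. Then every section $\Theta_b + \Theta_{-b}$ with $b \in \operatorname{Sing}_2 \Theta$ must vanish at $a$, which imposes strong incidence conditions on the translates of $\Theta$ and hence on $\operatorname{Sing}\Theta$ itself. The hope is to show that such conditions force a one-parameter family of singular symmetric divisors, from which one extracts an algebraic curve $C \subset A$ whose difference surface $C - C$ lies in $V_{00}$, and then to invoke a trisecant-type characterization (Welters, Krichever) to conclude $(A,\Theta) \cong (JC, \Theta_C)$, contradicting the hypothesis. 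The hard part is producing the curve: one must upgrade analytic information about the base locus $V_{00}$ and about $\operatorname{Sing}\Theta$ to an effective one-cycle in $A$, precisely the geometric-to-algebraic passage that has obstructed most approaches to the Schottky problem. I expect the principal difficulty of the proof to lie exactly here, rather than in the theta-function manipulations needed for part (1).
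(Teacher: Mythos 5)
The statement you are attempting to prove is not a theorem of the paper: it is the van Geemen--van der Geer conjecture, which the survey records as open. The paper only lists partial results: part (1) is known by work of Welters and Beauville--Debarre \emph{except} for non-hyperelliptic curves of genus four, where $V_{00}$ was worked out explicitly and the naive statement requires modification; part (2) is known only in special cases ($g=4$ by Izadi, intermediate Jacobians of cubic threefolds and certain Pryms by Beauville--Debarre--Donagi--van der Geer, and $\dim V_{00}\le 0$ for the general ppav of dimension at least four). So there is no proof in the paper to compare yours against, and a complete argument along your lines would be a substantial new theorem rather than a reconstruction of an existing one.

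Beyond that, the plan has concrete gaps. In part (1), everything rests on the claim that the reducible sections $\Theta_a+\Theta_{-a}$ with $a\in\operatorname{Sing}\Theta$ span $\Gamma_{00}$. This is false: for a non-hyperelliptic Jacobian of dimension $4$ one has $\dim\Gamma_{00}=2^4-1-\binom{5}{2}=5$, while $\operatorname{Sing}\Theta$ consists of at most the two points corresponding to $L$ and $K_C\otimes L^{\vee}$ for a $g^1_3$, yielding essentially a single such section; more drastically, a generic ppav of dimension $4$ has $\operatorname{Sing}\Theta=\emptyset$ and yet $\Gamma_{00}\neq 0$. Without spanning, verifying that these particular sections vanish on $C-C$ only shows that $C-C$ lies in the base locus of a \emph{subsystem} of $\Gamma_{00}$, and that base locus contains $V_{00}$ rather than being contained in it, so neither inclusion of the asserted equality $V_{00}=C-C$ follows. (It is no accident that $g=4$, where your supply of singular points is smallest, is exactly the exceptional case of the conjecture.) In part (2), the step you yourself flag as ``the hard part'' --- upgrading incidence conditions on $\operatorname{Sing}\Theta$ and on the base locus to an actual curve $C\subset A$ to which a trisecant-type criterion can be applied --- is precisely the open content of the conjecture; note also that your contradiction scheme needs sections $\Theta_b+\Theta_{-b}$ to exist in the first place, which fails whenever $\operatorname{Sing}\Theta=\emptyset$, i.e.\ for the generic ppav. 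As it stands the proposal proves neither direction of part (1) nor any case of part (2).
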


The first part of the conjecture has been proven by Welters \cite{wcc} and Beauville-Debarre \cite{bdso}, except for the case of non-hyperelliptic curves of genus four, for which the locus $V_{00}$ was worked out explicitly.
(See also
Izadi \cite{izft} for a scheme theoretic description).  The second part of the conjecture has been proven in a number of special cases.  For $g=4$, this  was proven by 
Izadi \cite{izgs}.  Beauville-Debarre-Donagi-van der Geer \cite{bddvdg} prove this for the intermediate Jacobian of a cubic threefold, and the Prym varieties of even double covers of plane curves.   Beauville-Debarre  \cite[Theorem 2]{bdso} prove that for the general ppav of dimension at least four, $\dim V_{00}=0$.  
With respect to a result of  Beauville's  that $\bar{J}_g \subseteq \bar{P}_{g+1}$ for all $g$ (\cite[Theorem 5.4]{bschot} with a Wirtinger cover), as a special case of the conjecture, it would be interesting to know to what extent $\dim V_{00}\ge 2$ characterized Jacobians among Pryms.
In this direction Izadi \cite{izso} proves that for a general Prym variety of dimension $\ge 16$, $\dim V_{00}\le 1$.
We also point out that there is an infinitesimal version of the conjecture stated in \cite{bdso} for which a number of similar results have been proven.

\section{Cubic threefolds}
Cubic threefolds, that is smooth cubic hypersurfaces in $ \mathbb P^4$, have been studied since the nineteenth century.  In the nineteen seventies, Clemens and Griffiths gave a detailed analysis of the canonical principal polarization of the intermediate Jacobian of a cubic threefold.  The singularities of the theta divisor played a central part in their proof of the irrationality of the cubic, as well as in Mumford's proof of Clemens-Griffiths' Torelli theorem for cubic threefolds.  We revisit these well known results, and then state some  recent results including one of Friedman and the author's characterizing intermediate Jacobians via the geometry of their theta divisor.
Recall that for a cubic threefold $X$ the intermediate Jacobian is defined as $JX:=
H^{1,2}(X)/H^3(X,\mathbb Z)$, which is a five dimensional abelian variety.  There is a canonical principal polarization $\Theta_X$, given by the hermitian form $H$ on $H^{1,2}(X)$ defined by
$H(\alpha,\beta)=2i\int_X\alpha\wedge \bar{\beta}$.

\subsection{The Torelli theorem and irrationality of cubic threefolds}
The unirationality of the cubic threefold was known since classical times:  fixing a line $\ell\subset X$, one can consider the $\mathbb P^2$-bundle $P\to\ell$ consisting of the lines tangent to $X$ at a point of $\ell$.  There is a dominant rational map $P\dashrightarrow X$ taking a line $L\in P$, which by definition meets $X$ with multiplicity two at a point of $\ell$, to its residual intersection point with $X$.

In \cite{cg},  Clemens and Griffiths observed that if a threefold is rational, its intermediate Jacobian must be the Jacobian of a curve, or the product of Jacobians of curves.   
  Through a careful study of the Fano surface of lines, and the  branch locus of the Gauss map for the theta divisor, Clemens and Griffiths prove the following.
\begin{teo}[Clemens-Griffiths \cite{cg}] Let $X$ and $X'$ be cubic threefolds.
\begin{enumerate}
\item If $(JX,\Theta_X)\cong (JX',\Theta_{X'})$, then $X\cong X'$. 
\item $(JX,\Theta_X)$ is not isomorphic to the Jacobian of a curve; it follows that cubic threefolds are irrational. 
\end{enumerate}
\end{teo}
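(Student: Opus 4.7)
The plan is to derive both statements from a single structural fact about the theta divisor: $\Theta_X$ has a unique singular point $p$, with $\operatorname{mult}_p\Theta_X=3$, and the projectivized tangent cone $\mathbb P C_p\Theta_X\subset\mathbb P T_pJX\cong\mathbb P^4$ is canonically isomorphic to $X$ itself. Granted this, part (1) is almost immediate, and part (2) will follow by combining it with a criterion for rationality of threefolds.

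To establish the structural fact, I would work with the Fano surface $F(X)$ of lines on $X$ and the difference map
\[
\Phi\colon F(X)\times F(X)\longrightarrow JX,\qquad (\ell,\ell')\longmapsto [\ell-\ell'],
\]
whose image, after an appropriate translation, is exactly $\Theta_X$. The diagonal $\Delta\subset F(X)\times F(X)$ collapses to $0\in JX$, so $p:=0$ is automatically singular on $\Theta_X$. A careful second-order analysis of $\Phi$ along $\Delta$, using that deformations of a general line $\ell\subset X$ are governed by $H^0(N_{\ell/X})$ with $N_{\ell/X}\cong\mathscr O_\ell\oplus\mathscr O_\ell$, identifies $\mathbb P C_p\Theta_X$ with the cubic hypersurface $X\subset\mathbb P^4$ and in particular forces $\operatorname{mult}_p\Theta_X=3$. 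Uniqueness of $p$ as a singular point is verified by checking that $\Phi$ is a local immersion off $\Delta$, using that two distinct lines on $X$ determine linearly independent tangent vectors in $JX$. Part (1) then follows formally: any isomorphism $(JX,\Theta_X)\cong(JX',\Theta_{X'})$ matches unique singular points and induces a linear isomorphism of projectivized tangent cones, whence $X\cong\mathbb P C_p\Theta_X\cong\mathbb P C_{p'}\Theta_{X'}\cong X'$.

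For (2), I would invoke the Clemens--Griffiths criterion, proved via weak factorization: any birational map between smooth projective threefolds factors through blow-ups and blow-downs along smooth points and smooth curves, and since blowing up a point leaves the intermediate Jacobian unchanged while blowing up a smooth curve $C$ adds $(JC,\Theta_C)$ as a principally polarized direct summand, rationality of $X$ would force $(JX,\Theta_X)$ to decompose as a product of Jacobians of smooth curves (a Krull--Schmidt-type uniqueness for indecomposable ppavs ensures the decomposition descends from the ambient product). It therefore suffices to rule out both a nontrivial product structure and being a single Jacobian. If $(JX,\Theta_X)$ decomposed as a nontrivial product, $\Theta_X$ would be reducible, contradicting the irreducibility of the image of $\Phi$. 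If $(JX,\Theta_X)$ were the Jacobian of a smooth curve of genus $g=\dim JX=5$, then Brill--Noether theory would give $\dim\operatorname{Sing}\Theta_X=\dim W^1_4\ge g-4=1$, contradicting $\operatorname{Sing}\Theta_X=\{p\}$ from (1). Hence $X$ cannot be rational.

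The main obstacle, and the real technical heart of the argument, is the identification of $\mathbb P C_p\Theta_X$ with $X$. Carrying this out requires precise infinitesimal information about the Fano surface along each line (computation of the normal bundle, control of the differential and second fundamental form of the Abel--Jacobi map along $\Delta$, and a careful matching of the resulting cubic form on $T_pJX$ with the defining equation of $X\subset\mathbb P^4$). This is essentially what Clemens and Griffiths achieve via the Gauss map and the geometry of its branch locus; the cleaner formulation through the triple point and its tangent cone is due to Mumford and Beauville.
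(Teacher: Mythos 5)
The paper does not actually prove this theorem: it is a survey item, and the text only records that Clemens and Griffiths argue ``through a careful study of the Fano surface of lines, and the branch locus of the Gauss map for the theta divisor.'' Your route is genuinely different: you deduce both parts from the statement that $\Theta_X$ has a unique singular point $p$, of multiplicity three, with $\mathbb P C_p\Theta_X\cong X$. That is precisely Mumford's theorem, which the paper states immediately after this one and explicitly notes gives a constructive Torelli theorem and ``another proof'' of irrationality. Your deductions of (1) and (2) from that structural fact are correct and match the survey's own remarks: the isomorphism of ppavs matches unique singular points and tangent cones, and a five-dimensional Jacobian would have $\dim\operatorname{Sing}\Theta\ge g-4=1$ by Brill--Noether theory, contradicting $\operatorname{Sing}\Theta_X=\{p\}$. (Historically the Clemens--Griffiths criterion is proved by resolving the indeterminacy of $\mathbb P^3\dashrightarrow X$ by blow-ups along points and smooth curves, not by weak factorization, but either works.)

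The genuine gap is in your proof of the structural fact, specifically the uniqueness of the singular point. The difference map $\Phi\colon F(X)\times F(X)\to\Theta_X$ is generically six-to-one onto its image, so knowing that $\Phi$ is an immersion off the diagonal only tells you that each local analytic branch of the image is smooth; it does not prevent $\Theta_X$ from being singular where distinct branches cross (the normalization of a nodal curve is an immersion at both preimages of the node, yet the node is singular). Since six sheets pass through a general point of $\Theta_X$, controlling how they meet is exactly the hard part, and $\operatorname{Sing}\Theta_X=\{p\}$ is a substantial theorem: Mumford and Beauville prove it by exhibiting $JX$ as the Prym variety of a connected \'etale double cover of a plane quintic and then classifying the stable and exceptional singularities of the Prym theta divisor (see also Clemens and Smith--Varley for other proofs). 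You correctly flag the identification $\mathbb P C_p\Theta_X\cong X$ as the technical heart, but the uniqueness statement deserves the same flag; as written, the ``local immersion'' argument would fail. A minor simplification elsewhere: to rule out a nontrivial product decomposition you can skip irreducibility of $\Phi$'s image and just note that a nontrivial product of ppavs has $\dim\operatorname{Sing}\Theta=g-2=3$, again contradicting $\operatorname{Sing}\Theta_X=\{p\}$.
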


A theorem of Mumford's  gives a precise description of the singularities of the theta divisor.

\begin{teo}[Mumford \cite{mprym}]
The intermediate Jacobian $(JX,\Theta_X)$ of a cubic threefold $X$  is the Prym variety associated to the double cover of a smooth plane quintic.  Moreover, $\operatorname{Sing} \Theta_X=\{x\}$, and $C_x\Theta_X\cong X$.
\end{teo}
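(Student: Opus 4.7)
The plan is to realize $(JX,\Theta_X)$ explicitly as a Prym variety via the classical conic bundle construction, and then to attack the singularity assertions using the RST for Pryms quoted above, together with a geometric identification of the tangent cone.

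First I would fix a line $\ell \subset X$ (which exists on any smooth cubic threefold by a standard incidence count). Projecting from $\ell$ and blowing up gives a conic bundle $\pi : \tilde X \to \mathbb P^2$, whose discriminant curve $C$ parameterizes those planes through $\ell$ that meet $X$ in $\ell$ plus a pair of lines. A local computation shows $C$ is a smooth plane quintic, hence $g(C) = 6$; the natural double cover $\tilde C \to C$ assigns to each degenerate conic the choice of a component, and one checks (using that no fiber becomes a double line, since $X$ is smooth) that this cover is \'etale and connected. The key input is then the general fact, due to Mumford in this setting, that the conic bundle structure induces a Hodge isomorphism $H^3(X,\mathbb Z) \cong H^1(\tilde C,\mathbb Z)^-$ carrying the cup product to (twice) the restricted intersection form; this yields $(JX,\Theta_X) \cong (P,\Xi)$ as ppavs, where $(P,\Xi)$ is the Prym associated to $\tilde C \to C$, of the correct dimension $g(C) - 1 = 5$.

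Next I would produce a distinguished singular point of $\Xi$. The line bundle $L_0 := \pi^*\mathcal O_C(1)$ lies on $\tilde C$, has degree $10 = 2g(\tilde C) - 2$, satisfies $\mathrm{Nm}(L_0) = \mathcal O_C(2) = \omega_C$, and has $h^0(L_0) \geq h^0(\mathcal O_C(1)) = 3$; its parity is even since $\mathcal O_C(1)$ contributes an even number of sections modulo $2$ after pullback. Hence $L_0 \in \Xi$. Moreover $L_0$ is manifestly of the form $\pi^* M$ with $h^0(M) = 3 > h^0(L_0)/2$ (forcing in fact $h^0(L_0) = 3$), so Theorem \ref{teosvrst} identifies $L_0$ as an RST-exceptional point, and the multiplicity formula from \cite{casa} gives $\operatorname{mult}_{L_0}\Xi = h^0(M) = 3$. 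For uniqueness of the singular point, I would combine Corollary \ref{corprym} (which for a $5$-dimensional Prym forces $\dim\mathrm{Sing}_k\Xi \leq 5 - 2k$) with Mumford's strengthened Martens analysis specialized to a plane quintic: any other singular $L' \in \Xi$ would produce either a base-point-free pencil on $C$ of low degree (forbidden because the Clifford index of a smooth plane quintic is $2$ and $C$ carries no $g^1_3$) or a second pullback line bundle of the same shape as $L_0$, which one rules out using uniqueness of $\mathcal O_C(1)$ as the plane model.

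Finally, I would identify the cubic projectivized tangent cone $\mathbb P C_{L_0}\Xi \subset \mathbb P H^0(K_C \otimes \eta)^\vee \cong \mathbb P^4$ with $X$ itself. The strategy is to use the Abel-Jacobi map from the Fano surface $F$ of lines on $X$: sending a line $m \subset X$ meeting $\ell$ to the corresponding divisor of the pencil $|L_0|$ identifies $F$ with a geometric object intrinsically attached to $(P,\Xi,L_0)$, and the image $F \to JX$ is known to sweep out $\Theta_X$ with the double locus concentrated at $L_0$. Using the determinantal description of $C_{L_0}\Xi$ in Section~6 of \cite{casa} (for RST-exceptional double points), one reads off that the cubic is the secant/tangent variety of a concrete curve built from $|L_0|$ and $|L_0 \otimes \eta|$; comparing this with the classical description of $X$ as the projection of the Fano surface into $\mathbb P^4$ via the tangent/normal data along $\ell$ gives the desired isomorphism $C_{L_0}\Theta_X \cong X$.

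The main obstacle is this last paragraph: while the existence and multiplicity of the singularity are essentially formal consequences of the theorems quoted in the excerpt, pinning down the tangent cone as $X$ itself (and not merely as some cubic threefold in $\mathbb P^4$) requires matching two a priori different birational descriptions of $X$ — one from the conic bundle, the other from the determinantal/Pfaffian description of the tangent cone. I expect this step to require careful bookkeeping of the Abel-Jacobi image of $F$ and of the base locus of $|L_0|$ on $\tilde C$.
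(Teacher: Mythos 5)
First, note that the survey does not actually prove this theorem: it cites Mumford \cite{mprym} and points to Beauville \cite{bmumford}, Clemens \cite{cleminf}, Tjurin \cite{tjurin}, and Smith--Varley \cite{svpac} for detailed proofs. So your proposal has to be judged on its own merits. Your first paragraph (conic bundle from projection off a general line, smooth discriminant quintic, \'etale connected double cover, Hodge-theoretic identification of $(JX,\Theta_X)$ with $(P,\Xi)$) is the standard and correct route to the first assertion, modulo the caveat that smoothness of the discriminant requires $\ell$ to be chosen generically.

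The second paragraph, however, contains genuine errors, and it skips over the one point that actually carries the content. The degree of $L_0=\pi^*\mathcal O_C(1)$ is $10=2g(C)-2$, not $2g(\widetilde C)-2=20$. More seriously, $h^0(L_0)=h^0(\mathcal O_C(1))+h^0(\mathcal O_C(1)\otimes\eta)=3+h^0(\eta(1))$, so membership of $L_0$ in $P$ (which requires $h^0(L_0)$ \emph{even}) is exactly the statement that the theta characteristic $\eta(1)$ is odd. This is the decisive geometric input --- it is precisely what separates intermediate Jacobians from the other Pryms of plane quintics --- and your parenthetical ``its parity is even since $\mathcal O_C(1)$ contributes an even number of sections modulo $2$'' is not an argument; worse, your subsequent claim ``forcing in fact $h^0(L_0)=3$'' is inconsistent with $L_0\in P$, since $3$ is odd (the correct value is $h^0(L_0)=4$, whence $h^0(M)=3>h^0(L_0)/2=2$ and the multiplicity-$3$ conclusion via Theorem~\ref{teosvrst} and the theorem from \cite{casa} does go through). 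The uniqueness of the singular point is also under-argued: Corollary~\ref{corprym} for a five-dimensional Prym gives $\dim\operatorname{Sing}_2\Xi\le 1$, not $0$, so it does not even yield finiteness; one must separately rule out stable singularities other than $L_0$ and, crucially, RST-exceptional double points of the form $\pi^*M\otimes\mathcal O_{\widetilde C}(B)$ with $M$ one of the $g^1_4$'s $\mathcal O_C(1)(-p)$ on the quintic --- again a parity/case analysis in the style of Mumford's strengthened Martens argument, not a formal consequence of the quoted statements.

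Finally, the third paragraph does not prove the tangent cone statement, which is the heart of the theorem, and you acknowledge as much. ``Reading off'' that the determinantal description of $C_{L_0}\Xi$ from \cite{casa} yields the cubic $X$ itself (under the identification $\mathbb P T_{L_0}P\cong\mathbb PH^0(K_C\otimes\eta)^\vee\cong\mathbb P^4$) is precisely the hard step; indeed the survey records (Question~\ref{queprym} and the remark following it) that even extracting the \emph{rank} of tangent cones at RST-exceptional double points from that description is open, so one should not expect the triple-point case to fall out by inspection. The known arguments do something genuinely different here: Clemens--Griffiths and Tjurin compute the tangent cone at the origin as the image of the projectivized normal data of the diagonal under the difference map $F\times F\to\Theta_X$ of the Fano surface, identifying it with the union of the lines of $X$ in $\mathbb P^4$, i.e.\ with $X$; Beauville's Prym-theoretic proof makes an explicit computation with the quadrics through the Prym-canonical quintic. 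Either of these would need to be imported wholesale to close your argument.
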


In particular this establishes a constructive version of the Torelli theorem for cubic threefolds via the singularities of the theta divisor.
Also, since the intermediate Jacobian of a cubic threefold is a five dimensional ppav,  
this gives another proof of the irrationality of the cubic threefold.  Indeed, 
if the cubic were rational, the singular locus of the theta divisor would be of dimension at least one.

Detailed proofs of Mumford's theorem have been given by Beauville \cite{bmumford}, and Clemens \cite{cleminf}.  A related result was proven by Tjurin in \cite{tjurin}.  Recently, a new  proof was given by Smith-Varley \cite{svpac} (see also  \cite{casathesis}).
We point out here that Clemens and Griffiths' observation that the intermediate Jacobian of a rational threefold is isomorphic to the product of Jacobians of curves has been used to show the irrationality of other threefolds.  We direct the reader for instance to \cite{bint}, where  the rationality of conic bundles is discussed.

\subsection{The Schottky problem for cubic threefolds}
In analogy with the Schottky problem, one can ask for a characterization of intermediate Jacobians of cubic threefolds among all ppavs of dimension five.  In \cite{cmf}, Friedman and the author prove the following statement giving a converse to Mumford's theorem, and a geometric solution to the Schottky problem for cubic threefolds.

\begin{teo}[\cite{cmf}]
Suppose that $(A,\Theta)$ is a five dimensional ppav, with $\operatorname{Sing} \Theta=\{x\}$, and $\operatorname{mult}_x\Theta=3$.  Then $(A,\Theta)$ is the intermediate Jacobian of a smooth cubic threefold. 
\end{teo}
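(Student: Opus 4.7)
The plan is to recover a smooth cubic threefold directly from the tangent cone at the singular point, and then to identify $(A,\Theta)$ with its intermediate Jacobian, arguing a converse to Mumford's theorem stated above.

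By the definition of multiplicity, the projectivised tangent cone $X:=\mathbb P C_x\Theta\subset\mathbb P T_xA\cong\mathbb P^4$ is a hypersurface of degree $3$. A preliminary observation is that $(A,\Theta)$ must be indecomposable: the theta divisor of a product ppav has a singular locus of dimension $g-2=3$, contradicting $\operatorname{Sing}\Theta=\{x\}$. The more delicate local step is to prove that $X$ is smooth. For this I would combine Koll\'ar's theorem---that pairs $(A,\Theta)$ are log canonical, which in particular gives $\operatorname{mult}_x\Theta\leq g=5$ and controls the tangent cone---with Ein--Lazarsfeld-type bounds on the dimension of multiplier ideal loci. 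Any singular point of $X\subset\mathbb P^4$ should propagate, via the infinitesimal structure of $\Theta$ near $x$, either into a higher-multiplicity sub-stratum violating those bounds, or into additional isolated singularities of $\Theta$, contradicting the hypothesis.

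Once $X$ is established as a smooth cubic threefold, Mumford's theorem provides a candidate target: $(JX,\Theta_X)$ is a $5$-dimensional ppav whose theta divisor also has a unique singular point $y$ with $C_y\Theta_X\cong X$. The heart of the argument is to promote this coincidence of local tangent cones into a global isomorphism $(A,\Theta)\cong(JX,\Theta_X)$. A natural route is to construct inside $A$ an algebraic surface $S$ playing the role of the Fano surface of lines in $X$---for instance via the theta-dual construction of Pareschi and Popa, or as an irreducible component of the locus $\{a\in A:\Theta_a\text{ meets }\Theta\text{ with prescribed multiplicity at }x\}$---and to verify that $[S]=[\Theta]^3/3!$, so that an Abel--Jacobi-type map from $S$ reconstructs $A$ and recognises it as $JX$. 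An alternative is a degeneration argument: deform $(A,\Theta)$ within $\mathcal A_5$ toward the boundary of the intermediate-Jacobian component, exploit Mumford's description of $JX$ as the Prym of a plane-quintic double cover, and propagate the tangent cone datum through the degeneration.

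The principal obstacle is this second, global step. Smoothness of $X$ is a controlled local question that the log-canonicity machinery of Koll\'ar, Ein, and Lazarsfeld should be able to settle. By contrast, passing from a single triple point with a prescribed tangent cone to a full isomorphism of polarised abelian varieties requires extracting, out of information that a priori describes only an infinitesimal neighbourhood of $x$, enough cycle-theoretic or deformation-theoretic structure to pin $(A,\Theta)$ down among all five-dimensional ppavs in the Andreotti--Mayer locus $N_0$.
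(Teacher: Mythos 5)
Your outline has two genuine gaps, and the first is not merely technical. You propose to deduce smoothness of the cubic $X=\mathbb{P}C_x\Theta$ from Koll\'ar's log-canonicity and Ein--Lazarsfeld-type bounds, via a ``propagation'' of singular points of $X$ into extra singularities or higher-multiplicity strata of $\Theta$. No such propagation principle exists: those results bound the dimensions of the loci $\operatorname{Sing}_k\Theta$, but say nothing about the singularities of the projectivized tangent cone at a fixed isolated singular point, and an isolated hypersurface triple point can perfectly well have a badly singular tangent cone. The relevant cautionary example sits in this very circle of ideas: at the unique $g^2_4$-point of the theta divisor of a genus-$5$ hyperelliptic Jacobian one gets a triple point whose tangent cone is the secant variety of the rational normal curve, singular along that curve; what excludes this case under the hypotheses is that $\Theta$ then has \emph{other} singular points coming from Brill--Noether theory of the curve, a global fact invisible to any local analysis at $x$. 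Your second step --- passing from a smooth cubic tangent cone to $(A,\Theta)\cong(JX,\Theta_X)$ --- is left open by your own admission, and constructing a Fano-type surface $S\subset A$ with $[S]=[\Theta]^3/3!$ from the infinitesimal data at $x$ is not easier than the theorem itself.

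The proof in \cite{cmf} runs along entirely different lines, which is why this survey spends so much time on Prym varieties. One first represents the given $(A,\Theta)$ as the Prym of an allowable (\'etale or Beauville-admissible) double cover $\pi:\widetilde{C}\to C$ of a genus-$6$ curve, using the surjectivity of the extended Prym map onto $\mathcal{A}_5$. The Riemann singularity theorem for Pryms (Theorem \ref{teosvrst} and the multiplicity formula from \cite{casa}) then says a point of multiplicity $3$ on $\Xi$ is either RST-stable with $h^0(L)=6$, or of the form $L\cong\pi^*M\otimes\mathscr{O}_{\widetilde{C}}(B)$ with $h^0(M)=3$; Mumford's strengthened Martens bound and elementary degree/Clifford arguments on the genus-$6$ curve reduce the possibilities to a short list (plane quintic, hyperelliptic, trigonal, \dots), and the hypothesis that $x$ is the \emph{only} singular point eliminates everything except the even double cover of a smooth plane quintic. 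At that point Mumford's theorem identifies $(A,\Theta)$ with the intermediate Jacobian of a smooth cubic threefold; the smoothness of the cubic and the isomorphism $\mathbb{P}C_x\Theta\cong X$ are outputs of that identification, not inputs established beforehand. If you want to salvage your plan, the local step is where to focus, but you should expect it to require the global Prym (or an equivalent degeneration) machinery rather than multiplier-ideal bounds.
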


The author has generalized this to show that the locus of ppavs of dimension five whose theta divisor contains a triple point, consists of three ten dimensional irreducible components, one of which is $\bar{I}$, the closure of the locus of intermediate Jacobians.
To be precise, set $N^3_0\subset \mathcal A_5$ to be the locus of ppavs whose theta divisor contains a point of multiplicity three. Let $\theta_{\textnormal{null}}\subset \mathcal  A_4$ be the locus of ppavs with a vanishing theta null, let $\mathcal  A_1\theta_{\textnormal{null}}$ and $\mathcal  A_1\bar{J}_4$, be the loci corresponding to products of the respective ppavs.

\begin{teo}[\cite{casaschot}]\label{teocasa3} In $\mathcal A_5$,
$$
N_0^3=\bar{I}\cup \mathcal  A_1\theta_{\textnormal{null}}\cup \mathcal  A_1\bar{J}_4.
$$
In particular, an indecomposable ppav of dimension five with a triple point on the theta divisor is either the Jacobian of a hyperelliptic curve, or the intermediate Jacobian of a cubic threefold.
\end{teo}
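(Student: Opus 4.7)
The plan is to prove each inclusion in turn, relying on the preceding Casalaina-Martin--Friedman theorem as the central input in the indecomposable case.

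For the inclusion $\bar I\cup\mathcal A_1\theta_{\textnormal{null}}\cup\mathcal A_1\bar J_4\subseteq N_0^3$, I would verify each of the three loci directly. Mumford's theorem stated above shows that a smooth intermediate Jacobian has a unique singular point of multiplicity three, and upper semicontinuity of multiplicities extends this to the whole closure $\bar I$. For the other two loci, additivity of theta-divisor multiplicities on a product $E\times B$ yields
$$\operatorname{mult}_{(0_E,y)}\Theta_{E\times B}=1+\operatorname{mult}_y\Theta_B,$$
so choosing $y$ to be a vanishing theta null of $B\in\theta_{\textnormal{null}}$, or a Riemann singular point of $B=JC\in\bar J_4$ (which exists in genus four by the Brill-Noether bound $\dim W^1_3\geq 0$ mentioned in Section~1.3), makes this multiplicity equal to three.

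For the reverse inclusion, take $(A,\Theta)\in N_0^3$ with $\operatorname{mult}_x\Theta\geq3$. In the indecomposable case, if $\operatorname{Sing}\Theta=\{x\}$ is a single isolated triple point then the preceding Casalaina-Martin--Friedman theorem places $(A,\Theta)\in I\subseteq\bar I$ directly. For the general indecomposable case I would use the Koll\'ar and Ein-Lazarsfeld bounds on singularities of theta divisors of indecomposable ppavs: these force the indecomposable part of $N_0^3$ to have dimension at most ten near a general point, and show that its general member has an isolated triple point and hence lies in $I$; the whole indecomposable part of $N_0^3$ is then contained in $\bar I$ by taking closures.

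In the decomposable case, write $A=A_1\times\cdots\times A_n$ into indecomposable factors with $\sum_i\dim A_i=5$. Additivity of multiplicities at $x=(x_1,\ldots,x_n)$ gives
$$\sum_i\operatorname{mult}_{x_i}\Theta_i\geq 3.$$
Using the classical bounds --- smooth theta divisor on an indecomposable ppav of dimension at most two, maximum multiplicity two on an indecomposable three-dimensional ppav (attained only by the hyperelliptic Jacobian of genus three at its vanishing theta null), and maximum multiplicity two on an indecomposable four-dimensional ppav with classification into $\bar J_4\cup\theta_{\textnormal{null}}$ via the Andreotti-Mayer theorem in $\mathcal A_4$ --- I would case-analyze the partitions of five. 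The main case, partition $4+1$, gives $A=E\times B$ with $B$ indecomposable in $\bar J_4\cup\theta_{\textnormal{null}}$, placing $A$ in $\mathcal A_1\bar J_4\cup\mathcal A_1\theta_{\textnormal{null}}$. The remaining partitions (in particular $3+2$, where $A=D\times JC_3^{\textnormal{hyp}}$ for $D$ an indecomposable abelian surface) produce decomposable ppavs that sit either in $\bar I$ via the degeneration theory of intermediate Jacobians of nodal cubic threefolds, or in $\mathcal A_1\theta_{\textnormal{null}}$ after degenerating $D$ into a product of elliptic curves. The principal obstacle is the indecomposable case when $\operatorname{Sing}\Theta$ is positive-dimensional: extending the preceding theorem and ruling out spurious components of $N_0^3$ beyond $\bar I$ demands deformation theory together with sharp control of how $\operatorname{Sing}\Theta$ varies in families; a secondary difficulty is the partition $3+2$ case, where containment of $D\times JC_3^{\textnormal{hyp}}$ in a named component is not immediate from additivity of multiplicities and requires invoking the degeneration theory of intermediate Jacobians.
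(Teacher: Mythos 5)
The survey does not actually reprove this theorem---it is quoted from \cite{casaschot}---so I am measuring your proposal against the strategy the surrounding text makes visible. Your forward inclusion is fine, and the decomposable case analysis is essentially right except for one slip: in the partition $3+2$, the ppav $JC_2\times JH_3$ (with $H_3$ hyperelliptic of genus three) is \emph{not} in $\mathcal A_1\theta_{\textnormal{null}}$, with or without ``degenerating $D$ into a product of elliptic curves''---membership in a closed locus is not inherited from a degeneration of the ppav in question. The correct placement is $JC_2\times JH_3\in\bar J^h_{2,3}\subseteq\bar J^h_5\subseteq\bar I$, the last inclusion being Collino's theorem (invoked in the paper in the proposition immediately following this one); you gesture at ``degeneration theory of nodal cubic threefolds'' but do not commit to the statement that actually closes this case.

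The genuine gap is the indecomposable case when $\operatorname{Sing}\Theta$ is not a single reduced point. You propose that the Koll\'ar and Ein--Lazarsfeld bounds ``force the indecomposable part of $N_0^3$ to have dimension at most ten near a general point'' and that ``its general member has an isolated triple point.'' Neither step follows. Koll\'ar's theorem bounds $\dim\operatorname{Sing}_k\Theta$ \emph{inside a fixed ppav} (here $\le g-k=2$); it says nothing about the codimension of $N_0^3$ in $\mathcal A_5$, and even a correct dimension bound would not tell you that a general member of every component of $N_0^3$ satisfies $\operatorname{Sing}\Theta=\{x\}$ with $\operatorname{mult}_x\Theta=3$---which is precisely the hypothesis needed to invoke the Casalaina-Martin--Friedman theorem, and precisely what could fail on a putative extra component whose general member has a triple point accompanied by further double points or a positive-dimensional singular locus. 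The argument of \cite{casaschot}, which the survey is structured to suggest, instead uses that the Prym map $\mathscr P:\mathcal R_6\to\mathcal A_5$ is dominant, so the classification of triple points reduces to the Prym Riemann Singularity Theorem of Section 2: a point of $\Xi$ of multiplicity three is either RST-stable with $h^0(L)=6$ or RST-exceptional arising from a line bundle $M$ on $C$ with $h^0(M)=3$, and Mumford's Martens-type analysis of such line bundles on a genus-six base curve forces the cover to be that of a plane quintic (the cubic threefold case), a hyperelliptic curve, or one giving a decomposable Prym. Without that reduction, or some substitute for it, your argument does not close.
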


This has an amusing geometric consequence for tangent cones to the theta divisor at triple points.

\begin{pro}
Suppose that $(A,\Theta)\in N^3_0\subset \mathcal A_5$.  For $z\in \operatorname{Sing}_3\Theta$, set $X=C_z\Theta$.  $X$ is irreducible, if and only if one of the following holds
\begin{enumerate}
\item $X$ is a smooth cubic threefold, and $(A,\Theta)\cong (JX,\Theta_X)$;
\item $X$ is the secant variety to the rational normal curve (singular along this curve), and there exists a family of smooth cubic threefolds 
$\mathscr X_t\to \Delta$
 over the unit disc $\Delta\subset \mathbb C$, degenerating to $\mathscr X_0=X$ such that
$(A,\Theta)\cong \lim_{t\to 0}JX_t$.  
In this case $(A,\Theta)$ is the Jacobian of a hyperelliptic curve. 
\end{enumerate}
\end{pro}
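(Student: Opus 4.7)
The plan is to combine Theorem~\ref{teocasa3} (the decomposition $N_0^3=\bar I\cup\mathcal A_1\theta_{\text{null}}\cup\mathcal A_1\bar J_4$) with a direct computation of the projective tangent cone $X$ in each of the three components.

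First I would eliminate the two product components. Writing $(A,\Theta)=(E,\Theta_E)\times(A',\Theta_{A'})$ with $\dim A'=4$, set-theoretically $\Theta=(\{0\}\times A')\cup(E\times\Theta_{A'})$, and at a point $z=(0,y)$ of the intersection the theta divisor has local equation $t\cdot\theta_{A'}(y+u)$ in coordinates $(t,u)$. Hence $\operatorname{mult}_z\Theta=1+\operatorname{mult}_y\Theta_{A'}$, and $z\in\operatorname{Sing}_3\Theta$ exactly when $y$ is a double point of $\Theta_{A'}$ (either a vanishing theta null, or a point of $W^1_3$ on a genus-$4$ Jacobian). In either subcase the lowest-order part of $t\theta_{A'}(y+u)$ factors as $t\cdot Q(u)$ with $Q$ the nonzero quadric tangent cone of $\Theta_{A'}$ at $y$, so $X=\{t=0\}\cup V(Q)$ is a reducible cubic. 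Thus $X$ irreducible forces $(A,\Theta)\in\bar I$, and Theorem~\ref{teocasa3} narrows the indecomposable possibilities to a smooth intermediate Jacobian or a hyperelliptic Jacobian of genus $5$.

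For a smooth intermediate Jacobian, Mumford's theorem gives $X\cong X_0$ for $X_0$ a smooth cubic threefold, yielding case~(1). For a hyperelliptic Jacobian $JC$ of genus $5$, Clifford's inequality forces $\operatorname{Sing}_3\Theta$ to consist of the single point $L=2g^1_2$, the unique theta characteristic with $h^0=3$, and Kempf's theorem identifies $X$ with $\bigcup_{D\in|2g^1_2|}\overline{\phi_K(D)}\subset\mathbb P H^0(K_C)^\vee=\mathbb P^4$. Since the canonical map factors as $C\xrightarrow{\pi}\mathbb P^1$ followed by the degree-$4$ Veronese embedding, $\phi_K(C)$ is a rational normal quartic $R\subset\mathbb P^4$; every $D\in|2g^1_2|$ has the form $\pi^*D_0$ with $D_0\in|\mathcal O_{\mathbb P^1}(2)|$, so each span $\overline{\phi_K(D)}$ is a secant line of $R$, and $X=\operatorname{Sec}(R)$ is the chordal cubic, singular exactly along $R$ and manifestly irreducible. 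This recovers the geometric description in~(2).

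It remains to produce the degeneration of smooth cubics required in~(2). Since $\bar I$ is an irreducible $10$-dimensional component of $N_0^3$ containing $(A,\Theta)$, choose a one-parameter smoothing $(A_t,\Theta_t)\subset\bar I$ over $\Delta$ with $(A_0,\Theta_0)=(A,\Theta)$ and $(A_t,\Theta_t)\in I$ for $t\ne 0$. Mumford's theorem then associates to each $t\ne 0$ a smooth cubic threefold $\mathscr X_t$ with $(A_t,\Theta_t)\cong(J\mathscr X_t,\Theta_{\mathscr X_t})$, and flatness of the family of tangent cones along the locus of triple points identifies $\mathscr X_0=\lim_{t\to 0}\mathscr X_t$ with $X=\operatorname{Sec}(R)$. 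I expect the key technical hurdle to be establishing this flatness across the boundary $\bar I\setminus I$: one must either exploit the determinantal or Pfaffian presentation of the tangent cones to show they fit into a flat family over $\Delta$, or invoke semistable reduction for a family of smooth cubic threefolds degenerating to the chordal cubic. Uniqueness of the limit then follows from the intrinsic identification $X=\operatorname{Sec}(R)$ together with the irreducibility of $\bar I$.
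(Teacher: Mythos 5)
Your proposal is correct and follows the paper's overall architecture: reduce via Theorem~\ref{teocasa3} (together with the fact that no indecomposable fourfold has a triple point, Theorem~\ref{teocasa5}) to the two indecomposable cases, quote Mumford for the cubic threefold, and apply Kempf's theorem at the unique $g^2_4=2g^1_2$ on the hyperelliptic Jacobian. Two steps genuinely diverge from the paper. First, to identify $X$ with the chordal cubic you use the set-theoretic part of Kempf's theorem (the support is $\bigcup_{D\in|L|}\overline{\phi_K(D)}$, a union of secant lines of the rational normal quartic), whereas the paper uses the determinantal part: in suitable coordinates $C_z\Theta$ is cut out by the $3\times3$ catalecticant determinant, which is the classical equation of $\operatorname{Sec}(R)$. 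Both are valid; your route needs the small extra remark that Kempf also gives $X$ reduced of degree $3$, so the set-theoretic identification with the degree-$3$ variety $\operatorname{Sec}(R)$ upgrades to an equality of cubic hypersurfaces. Second, for the degeneration in case~(2) the paper simply invokes Collino's theorem that hyperelliptic Jacobians of genus $5$ lie in the boundary of the intermediate Jacobian locus via the chordal cubic; you instead build the family by choosing an arc in $\bar I$ through $(A,\Theta)$ with general point in $I$. The ``flatness hurdle'' you flag is not really a hurdle and needs neither the Pfaffian presentation nor semistable reduction: by upper semicontinuity and uniqueness of the triple point in every fibre, $\operatorname{Sing}_3$ of the family is (after base change) a section through $z$, and since the multiplicity is constantly $3$ along it, the degree-$3$ term of the Taylor expansion of the theta function is a nowhere-vanishing holomorphic family of cubic forms, so the tangent cones $\mathscr X_t=C_{z_t}\Theta_t$ automatically converge to $X$. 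With that observation your argument is self-contained and recovers (this instance of) Collino's result rather than citing it; the trade-off is that Collino's theorem gives more (a precise description of which hyperelliptic Jacobians arise and why), which the paper's citation imports for free.
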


\begin{proof}
From the previous theorem, and  Theorem \ref{teocasa5} below which states that there are no indecomposable ppavs of dimension four with a triple point on the theta divisor, one can check that for $z\in \operatorname{Sing}_3\Theta$, 
$C_z\Theta$ is irreducible if and only if 
$(A,\Theta)$ is indecomposable, and thus 
$(A,\Theta)$ is the intermediate Jacobian of a cubic threefold, or the  Jacobian of a hyperelliptic curve.  Case one is Mumford's theorem.  So assume that  $C$ is a smooth hyperelliptic curve of genus five.  To show that $X=C_z\Theta$ is the secant variety to the rational normal curve, one uses Kempf's theorem.  Indeed, $z$ corresponds to the unique line bundle $L$ whose associated complete linear series is a $g^2_4$.   After an appropriate choice of basis for $H^0(L)$, one can see using Kempf's theorem that
there is a basis $Z_0,\ldots,Z_4$ of $H^0(\mathbb P^4,\mathscr O_{\mathbb P^4}(1))$ such that  $C_z\Theta$ is defined by the determinant  of a matrix of the form
\[ \left( \begin{array}{ccc}
Z_0 & Z_1 & Z_2 \\
Z_1 & Z_2 & Z_3 \\
Z_2 & Z_3 & Z_4 \end{array} \right).
\]
Such a determinant defines the secant to the rational normal curve. 
Finally, the existence of the family $\mathscr X\to \Delta$ follows from a result of Collino \cite[Theorem 0.1]{collino} (for a proof of this statement via Prym varieties and degenerations of plane quintics, see \cite{casaschot}).
\end{proof}

 Recently several authors have considered projective compactifications of the moduli space of smooth cubic threefolds,  using both GIT constructions (Allcock \cite{allcock}, and  Yokoyama \cite{yoko}) as well as period maps for certain cubic fourfolds (Allcock-Carlson-Toledo \cite{act} and  Looijenga-Swiersta \cite{ls}).  Since the period map taking a cubic to its intermediate Jacobians is an embedding, the closure of the locus of intermediate Jacobians of cubic threefolds in  
 $\mathcal  A_5^*$, the Satake compactification of $\mathcal  A_5$, yields another projective compactification.   One would be interested in understanding the boundary locus in this compactification, as well as the relation of this space to the other compactifications mentioned above.  
 
In terms of the boundary locus $\partial I$ in $\mathcal A_5$, Theorem \ref{teocasa3} has the following consequence:  if $(A,\Theta)\in \partial I$ is indecomposable, then it is the Jacobian of a hyperelliptic curve, otherwise it is the product $(E,\Theta_E)\times (A,\Theta)$ where $(A,\Theta)$ is either a Jacobian or has a vanishing theta null.  In the case that $A$ is indecomposable, Sam Grushevsky has shown the author an argument using theta functions and results from \cite{gsm0} that $A$ is a Jacobian with a vanishing theta null.   
A complete description of the boundary locus in $\mathcal A^*_5$, as well as an explicit description of the relation between this compactification and the others mentioned above will appear elsewhere, in joint work of Radu Laza and the author. 

The boundary of the intermediate Jacobian locus in the toroidal compactifications can be considered as well.  Using results of Friedman-Smith \cite{fsdeg}, and Alexeev-Birkenhake-Hulek \cite{abh}, it may be possible to give at least a partial description of this locus.  See Gwena \cite{gwena} for some computations along these lines.

\section{Subloci of $\mathcal  A_g$}
In this section we consider subloci of $\mathcal A_g$ defined by conditions on the singularities of the theta divisor.

\subsection{Andreotti-Mayer loci}\label{secaml}

In addition to giving a partial solution to the Schottky problem, the Andreotti-Mayer loci $N_\ell\subset \mathcal  A_g$ have been studied  in a variety of other contexts.  We begin by considering the loci $N_0$.  In \cite{bschot} Beauville showed that in $\mathcal A_4$, $N_0$ is a divisor, and more precisely that $N_0=\bar{J}_4 \cup \theta_{\textnormal{null}}$.  In fact Mumford \cite{mka} observed that the same proof shows that $N_0$ is a divisor for all $g$, and described a scheme structure for this divisor;  it is a result of Debarre's that for all $g\ge 4$, 
$$
N_0=\theta_{\textnormal{null}}+2N_0',
$$
where  $N_0'$ is an irreducible divisor whose generic point corresponds to a ppav whose theta divisor has two singular points.  We note that the generic ppav in $\theta_{\textnormal{null}}$ has a unique singular point, a double point at a two-torsion point of the abelian variety.

This divisor was used by Mumford to analyze the Kodaira dimension of $\mathcal  A_g$.  In \cite{mka}, he constructed a partial compactification $\tilde{\mathcal  A}_g^{(1)}$ of $\mathcal  A_g$, with Picard group  generated  (over $\mathbb Q$) by  the hodge class $\lambda$ and  the class of the boundary $\delta$.  Let $\tilde{N}_0$ denote the closure of $N_0$ in this partial compactification.
\begin{teo}[Mumford \cite{mka}]
  On $\tilde{\mathcal  A}_g^{(1)}$,
\begin{enumerate}
\item $[K_{\tilde{\mathcal  A}_g^{(1)}}]=
(g+1)\lambda-\delta$.

\item $[\tilde{N}_0]=\left(\frac{(g+1)!}{2}+g!\right)\lambda -\frac{(g+1)!}{12}\delta$
\end{enumerate}
As a consequence, $\mathcal  A_g$ is of general type for $g\ge 7$.
\end{teo}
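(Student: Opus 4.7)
The plan is to prove the three assertions in sequence, with the general-type statement emerging as a numerical consequence of the first two plus bigness of $\lambda$ and effectivity of $\tilde{N}_0$.

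For part (1), the infinitesimal description of $\mathcal{A}_g$ provides an identification of the tangent space at $[(A,\Theta)]$ with $\mathrm{Sym}^2 H^1(A,\mathcal{O}_A)$, so globally $\Omega^1_{\mathcal{A}_g} \cong \mathrm{Sym}^2 \mathbb{E}$, where $\mathbb{E} = \pi_*\Omega^1_{\mathcal{X}_g/\mathcal{A}_g}$ is the Hodge bundle. A short representation-theoretic calculation gives $\det \mathrm{Sym}^2 \mathbb{E} = (\det \mathbb{E})^{g+1}$ for a rank-$g$ bundle, so $K_{\mathcal{A}_g} = (g+1)\lambda$ on the open part. To extend across $\delta$, I would use the canonical (Deligne) extension of $\mathbb{E}$ as a vector bundle on $\tilde{\mathcal{A}}_g^{(1)}$, whereby the identification upgrades to $\Omega^1_{\tilde{\mathcal{A}}_g^{(1)}}(\log \delta) \cong \mathrm{Sym}^2 \mathbb{E}$; taking determinants produces the $+\delta$ on the left, giving (1).

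For part (2), the strategy is to realize $\tilde{N}_0$ as the pushforward of a degeneracy cycle on the universal abelian variety $\pi\colon \mathcal{X}_g \to \mathcal{A}_g$. Let $\mathcal{T} \subset \mathcal{X}_g$ be the universal symmetric theta divisor with class $\theta$, and let $S \subset \mathcal{X}_g$ be the locus where $\mathcal{T}$ is singular, i.e.\ where both $\theta$ and its fiberwise gradient vanish. Using $\Omega^1_\pi = \pi^*\mathbb{E}$, one has $d\theta \in H^0(\mathcal{O}(\mathcal{T}) \otimes \pi^*\mathbb{E})$, and Porteous' formula yields
\[
[S] = c_1(\mathcal{O}(\mathcal{T})) \cdot c_g\bigl(\mathcal{O}(\mathcal{T}) \otimes \pi^*\mathbb{E}\bigr) = \sum_{k=0}^{g} \theta^{\,g+1-k} \cdot \pi^* c_k(\mathbb{E}).
\]
Pushing forward via $\pi$, Poincar\'e's formula gives $\pi_*\theta^g = g!$, while a Grothendieck--Riemann--Roch computation applied to $\pi_*\mathcal{O}(\mathcal{T})$, extended across $\tilde{\mathcal{A}}_g^{(1)}$, expresses $\pi_*\theta^{g+1}$ as a linear combination of $\lambda$ and $\delta$. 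Dividing by the degree of $S \to \tilde{N}_0$ (accounting for the $\pm 1$ symmetry of the symmetric theta divisor) assembles the stated formula.

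For part (3), solving (1) and (2) simultaneously yields
\[
K_{\tilde{\mathcal{A}}_g^{(1)}} = \left(g - 5 - \frac{12}{g+1}\right)\lambda + \frac{12}{(g+1)!}\,[\tilde{N}_0].
\]
The coefficient of $\lambda$ is positive precisely when $g^2 - 4g - 17 > 0$, i.e.\ for $g \geq 7$ (one checks $g=7$ gives $4 > 0$ and $g=6$ gives $-5 < 0$). Since $\lambda$ is ample on the Satake compactification, hence big on $\tilde{\mathcal{A}}_g^{(1)}$, and $[\tilde{N}_0]$ is effective, $K_{\tilde{\mathcal{A}}_g^{(1)}}$ is big in this range. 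The main obstacle, and the technical heart of the argument, is not the numerical bookkeeping but the step of converting ``$K$ is big on $\tilde{\mathcal{A}}_g^{(1)}$'' into ``$\mathcal{A}_g$ is of general type''. One must extend pluricanonical sections across a smooth resolution of $\mathcal{A}_g$ despite the quotient singularities coming from automorphism groups of ppavs and singularities along the toroidal boundary. The strategy is to show these singularities are sufficiently mild (canonical) that no discrepancy is lost; it is precisely for this reason that one works with $\tilde{\mathcal{A}}_g^{(1)}$, since the higher-codimension boundary strata can then be handled by Hartogs-type extension.
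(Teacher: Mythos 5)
The survey you are reading gives no proof of this theorem; it is quoted directly from Mumford's paper \cite{mka}, so there is no in-text argument to compare against. Your outline is, in essence, a faithful reconstruction of Mumford's actual strategy: $\Omega^1_{\tilde{\mathcal A}_g^{(1)}}(\log\delta)\cong \operatorname{Sym}^2\mathbb E$ with $c_1(\operatorname{Sym}^2\mathbb E)=(g+1)\lambda$ for part (1); the class of the singular locus of the universal theta divisor computed as the top Chern class of the first jet bundle $J^1(\mathcal O(\mathcal T))$ (your product $c_1(\mathcal O(\mathcal T))\cdot c_g(\mathcal O(\mathcal T)\otimes\pi^*\mathbb E)$ agrees with this by the Whitney formula, since $(\theta,d\theta)$ is really a section of an extension, not a direct sum), pushed forward with $\pi_*\theta^g=g!$ and a GRR evaluation of $\pi_*\theta^{g+1}$ for part (2); and the elimination of $\delta$ giving the coefficient $g-5-\tfrac{12}{g+1}$ of $\lambda$, positive exactly for $g\ge 7$, for part (3). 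Two ingredients remain genuinely unproved in your sketch and are where the real work lies. First, the boundary term $-\tfrac{(g+1)!}{12}\delta$ in $[\tilde N_0]$: this requires carrying out the Riemann--Roch computation on the extension of the universal family over the rank-one boundary, together with careful bookkeeping of the $\pm 1$ involution and the multiplicity with which the scheme $S$ covers $\tilde N_0$ (recall, as the survey notes, that $N_0=\theta_{\textnormal{null}}+2N_0'$); this is the computational core of \cite{mka} and cannot be assembled from the two facts you quote alone. Second, the passage from ``$K$ big'' to ``general type'' rests on Tai's theorem that for $g\ge 5$ the finite quotient singularities of $\mathcal A_g$ and of its toroidal compactifications are canonical, so that pluricanonical forms extend to a resolution; you correctly identify this as the technical heart but should cite it as an input rather than a step to be shown, since its proof (an analysis of the Reid--Tai criterion for symplectic group elements) is a substantial independent result.
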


For $g\le 3$, $\mathcal  A_g$ is rational (for $g=3$ see Katsylo \cite{katsylo}), while for $g=4,5$ results of  Clemens \cite{cuni} and Donagi \cite{duni} respectively show that $\mathcal A_g$ is unirational.  
The Kodaira dimension of  $\mathcal  A_6$ is not known.

In contrast to the situation for $N_0$, for $\ell>0$,  the loci $N_\ell$ are not well understood.  One exception is the case  $N_1\subset\mathcal  A_5$, where results of Debarre \cite[Proposition 8.2]{dsur} and Donagi \cite[Theorem 4.15]{dschot} show that $N_1$ consists of five irreducible components, one of which is $\bar{J}_5$, and  another of which is $\mathcal A_{1,4}$, the locus consisting of products of elliptic curves with ppavs of dimension four.  The remaining components which  we will denote by  $A$, $B$, and $C$, have dimensions $10$, $9$, and $9$ respectively.  Recently Ciliberto and van der Geer have made some progress giving bounds on the codimension of the $N_\ell$.  They prove the following theorem
\begin{teo}[Ciliberto-van der Geer \cite{cvdg1}, Theorems 2.1 and 2.2] In the notation above
\begin{enumerate}
\item For $g\ge 4$, and $1\le \ell\le g-3$, $\operatorname{codim} N_{g,\ell}\ge \ell+2$
\item $g\ge 5$, and $g/3\le \ell\le g-3$, $\operatorname{codim} N_{g,\ell}\ge \ell+3$
\end{enumerate}
\end{teo}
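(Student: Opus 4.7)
The plan is to bound $\operatorname{codim} N_{g,\ell}$ by linearizing the defining condition via the heat equation for theta functions, and then estimating the dimension of the linear span of the Hessians of $\theta$ at singular points. Fix a generic $(A,\Theta)\in N_{g,\ell}$ with period matrix $\tau$, and let $S\subseteq\operatorname{Sing}\Theta$ be an irreducible component of dimension $\ell$. Identify $T_{(A,\Theta)}\mathcal A_g$ with the space $\operatorname{Sym}^2 T_0 A$ of symmetric $g\times g$ matrices via the $\tau_{ij}$ coordinates, and use the classical heat equation
$$
\frac{\partial\theta}{\partial\tau_{ij}}=\frac{1}{2\pi i(1+\delta_{ij})}\frac{\partial^2\theta}{\partial z_i\partial z_j}
$$
to compute the first-order condition that a point $z\in S$ remain singular on the deformed theta divisor $\Theta_{\tau+\varepsilon\sigma}$. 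Since $\theta$ and its first derivatives vanish along $S$, this reduces to the pairing $\langle\sigma,H(z)\rangle=0$, where $H(z)\in\operatorname{Sym}^2 T_0^\vee A$ is the Hessian $(\partial_i\partial_j\theta(z))$. Requiring persistence of the full $\ell$-dimensional family of singularities to first order yields $T_{(A,\Theta)}N_{g,\ell}\subseteq\bigl\{\sigma:\langle\sigma,H(z)\rangle=0\text{ for all }z\in S\bigr\}$, whence
$$
\operatorname{codim} N_{g,\ell}\;\ge\;\dim\langle H(z):z\in S\rangle\ \text{ in }\operatorname{Sym}^2 T_0^\vee A.
$$

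The next step is to analyze the structure of the Hessian family. Differentiating the identities $\partial_j\theta|_S=0$ along any tangent direction $v\in T_zS$ gives $H(z)v=0$, so $T_zS\subseteq\ker H(z)$, and the projective tangent cone $Q_z=\{H(z)=0\}\subset\mathbb P T_0 A$ has rank at most $g-\ell$ with vertex containing $\mathbb P T_zS$. Hence the Hessian map $h\colon S\dashrightarrow\mathbb P\operatorname{Sym}^2 T_0^\vee A$, $z\mapsto[H(z)]$, takes values in the subvariety of quadrics of rank at most $g-\ell$.

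The remaining task is to prove $\dim\langle h(S)\rangle\ge\ell+2$, and under the additional hypothesis $\ell\ge g/3$ the stronger inequality $\dim\langle h(S)\rangle\ge\ell+3$. The naive count gives only $\dim\langle h(S)\rangle\ge\ell+1$ (from non-degeneracy of $h$), so the crux is to extract one, respectively two, extra independent directions in the span. The strategy is to rule out that $h(S)$ lies in an $\ell$-plane by showing that such a factorization would force a nontrivial linear relation among the tangent cones $Q_z$ as $z$ varies in $S$, contradicting the ampleness of $\Theta$ together with the dominance of the Gauss map of $\Theta$. For the sharper bound when $\ell\ge g/3$, combine the rank constraint $\operatorname{rank} H(z)\le g-\ell\le 2g/3$ from the previous paragraph with a Castelnuovo-type estimate on the linear span of an $\ell$-dimensional family of quadrics of bounded rank, to produce the third independent direction.

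The main obstacle is precisely the span estimate. The linearization via the heat equation is formal, but the lower bound on $\dim\langle h(S)\rangle$ requires genuine geometric input tying the infinitesimal variation of the tangent cones along $S$ to the global positivity of $\Theta$ on $A$. Any successful argument must quantify the rigidity of the family $\{Q_z\}_{z\in S}$ inside $\mathbb P\operatorname{Sym}^2 T_0^\vee A$, and the sharper bound in (ii) must exploit effectively the rank drop $g-\ell\le 2g/3$ that becomes available precisely in the range $\ell\ge g/3$.
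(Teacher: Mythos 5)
This is a survey, so the paper itself gives no proof of the Ciliberto--van der Geer theorem; it only cites it. Measured against the actual proof in the literature, your setup is the right one and is essentially theirs: linearize via the heat equation, observe that the first-order condition for a singular point $z$ of $\Theta$ to persist under a deformation $\sigma$ of the period matrix is $\langle\sigma,H(z)\rangle=0$, deduce $\operatorname{codim}N_{g,\ell}\ge\dim\langle H(z):z\in S\rangle$, and note that $T_zS\subseteq\ker H(z)$ forces $\operatorname{rank}H(z)\le g-\ell$. (Even here you gloss over a real point: to conclude that \emph{every} $z$ in an $\ell$-dimensional family imposes the condition $\langle\sigma,H(z)\rangle=0$ on $T_{(A,\Theta)}N_{g,\ell}$, you need the relative singular locus over the component of $N_{g,\ell}$ to be proper with fibers of dimension $\ge\ell$, so that an $\ell$-dimensional family of singularities genuinely deforms along $S$; this is standard but should be said.)

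The genuine gap is that the entire content of the theorem --- the lower bounds $\ell+2$ and $\ell+3$ on $\dim\langle H(z):z\in S\rangle$ --- is never proven; your last two paragraphs describe a ``strategy'' and then concede the obstacle. Concretely: (a) even the baseline $\dim\langle h(S)\rangle\ge\ell+1$ rests on an unproved assertion that the Hessian map $h$ is generically finite on $S$; a priori $H(z)$ could be constant, or could vary in a family of smaller dimension, and ruling this out already requires geometric input (this is where the indecomposability/ampleness of $\Theta$ enters in Ciliberto--van der Geer's argument, via showing that degeneracies of the family of quadrics $Q_z$ force the ppav to split or force $\operatorname{Sing}\Theta$ to be too large). (b) The step from $\ell+1$ to $\ell+2$ --- excluding that $h(S)$ spans only a $\mathbb P^\ell$ --- is exactly where the real work lies, and your appeal to ``ampleness of $\Theta$ together with the dominance of the Gauss map'' is not an argument: you would need to exhibit the contradiction explicitly, e.g.\ by showing that a degenerate span forces all the quadrics $Q_z$ to contain a common linear subspace of dimension incompatible with $\operatorname{Sing}_k\Theta\le g-k$ or with indecomposability. (c) For the bound $\ell+3$ when $\ell\ge g/3$, you invoke ``a Castelnuovo-type estimate on the linear span of an $\ell$-dimensional family of quadrics of bounded rank'' that is neither stated nor proved, and it is not clear what statement you have in mind; the inequality $g-\ell\le 2g/3$ by itself does not produce an extra independent quadric. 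As written, the proposal is a correct reduction of the theorem to its hardest step, followed by a placeholder where that step should be.
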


They do not expect that these bounds are sharp, and in fact have made the conjecture below, which they have proven in the case $\ell=1$.

\begin{con}[Ciliberto-van der Geer \cite{cvdg2}, Conjecture 1.1] If $1\le \ell\le g-3$ and $N$ is an irreducible component of $N_{g,\ell}$ whose general point corresponds to an abelian variety with endomorphism ring $\mathbb Z$, then $\operatorname{codim}_{\mathcal  A_g}(N)\ge \binom{\ell+2}{2}$.  Moreover, equality holds if and only if one of the following holds:
\begin{enumerate}
\item $g=\ell+3$ and $N=\bar{J}^h_g$;
\item $g=\ell +4$ and $N=\bar{J}_g$.
\end{enumerate}
\end{con}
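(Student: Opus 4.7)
The plan is to establish the codimension bound via a universal family argument that combines two independent codimension contributions: one from the fiber dimension of $\operatorname{Sing}\Theta$, and one from a forced degeneracy of the Hessian of $\theta$ at a generic singular point. Let $\pi: \mathcal X_g \to \mathcal A_g$ denote the universal ppav (over an appropriate level cover) with universal theta divisor $\Theta_{\text{univ}} \subset \mathcal X_g$, and let $\mathcal S \subset \mathcal X_g$ be the relative singular locus, cut out by $\theta = 0,\ d\theta = 0$, so $\operatorname{codim}_{\mathcal X_g}\mathcal S \leq g+1$.  For $(A,x) \in \mathcal S$, translation invariance identifies the Hessian $Q_x := d^2\theta|_x$ with an element of $\operatorname{Sym}^2 T_0^\vee A$, and these assemble into a map $\Psi:\mathcal S \to \operatorname{Sym}^2 \mathbb E^\vee$ over $\mathcal A_g$, where $\mathbb E$ is the Hodge bundle.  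Define $\mathcal S_\ell \subset \mathcal S$ to be the sublocus where $\operatorname{corank} Q_x \geq \ell$.

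The key observation is that if $x$ is a smooth point of $\operatorname{Sing}\Theta_A$ with $\dim_x \operatorname{Sing}\Theta_A \geq \ell$, then $T_x\operatorname{Sing}\Theta_A \subseteq \ker Q_x$, so $\operatorname{corank} Q_x \geq \ell$. Consequently, for a general $(A,\Theta) \in N$ the fiber $\pi^{-1}(A) \cap \mathcal S_\ell$ contains $\operatorname{Sing}\Theta$, and the restricted projection $\pi|_{\mathcal S_\ell}:\mathcal S_\ell \to N$ is dominant with fibers of dimension at least $\ell$. Combined with the standard fact that the locus of symmetric forms of corank $\geq \ell$ has codimension $\binom{\ell+1}{2}$ in $\operatorname{Sym}^2 \mathbb C^g$, and assuming $\Psi$ is transverse to this determinantal stratum, one obtains $\operatorname{codim}_{\mathcal X_g}\mathcal S_\ell \geq (g+1) + \binom{\ell+1}{2}$. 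Subtracting the fiber dimension $\ell$ from $\dim \mathcal S_\ell$ and using $\dim \mathcal X_g = \dim \mathcal A_g + g$ gives
\[
\operatorname{codim}_{\mathcal A_g} N \;\geq\; 1 + \binom{\ell+1}{2} + \ell \;=\; \binom{\ell+2}{2},
\]
which is the asserted bound. The hypothesis $\operatorname{End}(A) = \mathbb Z$ enters precisely in the transversality step, since extra endomorphisms would induce symmetries on $Q_x$ that force unexpected degeneracies.

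For the equality part, note first that $\bar J^h_g$ with $g = \ell+3$ has codimension $\binom{g-1}{2} = \binom{\ell+2}{2}$, and $\dim\operatorname{Sing}\Theta = g-3 = \ell$ by Martens' theorem, while $\bar J_g$ with $g = \ell+4$ has codimension $\binom{g-2}{2} = \binom{\ell+2}{2}$, and $\dim\operatorname{Sing}\Theta = g-4 = \ell$ by the Andreotti-Mayer theorem; in particular, both achieve the bound and contain points with $\operatorname{End} = \mathbb Z$. Conversely, equality forces (i) $\dim \operatorname{Sing}\Theta = \ell$ exactly and (ii) $\operatorname{corank} Q_x = \ell$ exactly at a general $x \in \operatorname{Sing}\Theta$, i.e.\ the projectivized tangent cone is a quadric of rank $g-\ell$. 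Under these sharp constraints Martens' theorem (pinning down when $\dim\operatorname{Sing}\Theta$ is maximal) combined with Andreotti-Mayer's characterization of $\bar J_g$ as an irreducible component of $N_{g-4}$ should identify $N$ as one of the two listed candidates, ruling out any other stratum meeting the bound.

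The principal obstacle is the transversality of $\Psi$ to the symmetric determinantal strata. Making this rigorous requires computing the differential of $\Psi$ -- i.e.\ understanding how $Q_x$ varies under an arbitrary first-order deformation in $T_A \mathcal A_g = \operatorname{Sym}^2 T_0 A$ -- and verifying that for ppavs with $\operatorname{End}(A) = \mathbb Z$ it is sufficiently non-degenerate to avoid parasitic corank jumps. This is established by hand in \cite{cvdg2} for $\ell = 1$; generalizing to arbitrary $\ell \leq g-3$ will presumably require an inductive analysis combined with boundary techniques on a toroidal compactification of $\mathcal A_g$ to control how $\Psi$ degenerates, and a separate argument to eliminate loci of ppavs lying entirely inside the corank-$\geq \ell+1$ stratum without being Jacobians.
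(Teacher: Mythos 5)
The statement you are trying to prove is a \emph{conjecture}: the paper records it as Ciliberto--van der Geer's Conjecture 1.1, explicitly noting that it is known only in the case $\ell=1$, and offers no proof. So there is no paper argument to compare against, and what you have written cannot be a complete proof of an open problem. What you have reproduced is, in essence, the standard expected-codimension heuristic that \emph{motivates} the conjecture: one equation for $\theta=0$, a fiber of dimension $\ge\ell$ in $\operatorname{Sing}\Theta$, and corank $\ge\ell$ of the Hessian costing $\binom{\ell+1}{2}$, summing to $\binom{\ell+2}{2}$. The arithmetic is right, the observation that $T_x\operatorname{Sing}\Theta\subseteq\ker Q_x$ at a smooth point of the singular locus is right, and your verification that $\bar J^h_{\ell+3}$ and $\bar J_{\ell+4}$ achieve the bound is correct. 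But the entire mathematical content of the conjecture is concentrated in the step you flag as "the principal obstacle": proving that $\Psi$ meets the symmetric determinantal strata transversally (and that $\mathcal S$ itself has codimension $g+1$ rather than merely being cut out by $g+1$ equations, which only bounds the codimension from above). Assuming transversality is assuming the conclusion. The hypothesis $\operatorname{End}(A)=\mathbb Z$ does not by itself supply this; indeed the best unconditional bounds known --- the theorems of Ciliberto--van der Geer quoted immediately before the conjecture, $\operatorname{codim}N_{g,\ell}\ge\ell+2$ (resp. $\ell+3$) --- fall far short of $\binom{\ell+2}{2}$, which is a measure of how much is missing.

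The equality part of your argument is also only a sketch: "Martens' theorem combined with Andreotti--Mayer \ldots should identify $N$ as one of the two listed candidates" is an assertion, not an argument, and ruling out other components achieving the bound (e.g. components of $N_{g,\ell}$ unrelated to Jacobians, or loci sitting inside deeper corank strata) is a separate and genuinely hard classification problem. In short: your heuristic is the right intuition and matches the reasoning that led Ciliberto and van der Geer to the statement, but it is not, and at present cannot be, a proof.
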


In other words, a special case of the conjecture is that the Jacobian locus in $\mathcal  A_g$ is exactly the sublocus of the Andreotti-Mayer locus $N_{g-4}$ which is of the correct dimension $3g-3$, and whose general point corresponds to an abelian variety with endomorphism ring $\mathbb Z$.

\subsection{Generalized Andreotti-Mayer loci}
There has also been some recent progress in understanding 
the dimension of $\operatorname{Sing}_k\Theta$ for $k>2$.  We call the loci in $\mathcal  A_g$ parameterizing these ppavs ``generalized Andreotti-Mayer loci'', and use the following notation:
$$
N^k_\ell=\{(A,\Theta)\in \mathcal  A_g:
\dim \operatorname{Sing}_k\Theta\ge \ell\}.
$$

The following result of Koll\'ar's gives a bound on the dimension of $\operatorname{Sing}_k\Theta$.

\begin{teo}[Koll\'ar \cite{ko}] The pair $(A,\Theta)\in \mathcal  A_g$ is log-canonical.  In particular,
$\dim \operatorname{Sing}_k\Theta\le g-k$.
\end{teo}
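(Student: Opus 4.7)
The statement has two parts, and the plan is to deduce the dimension bound as a formal consequence of the log-canonicity of $(A,\Theta)$, which is the real content.

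For the dimension bound, let $Z\subseteq\operatorname{Sing}_k\Theta$ be irreducible of dimension $\ell$, so $\operatorname{mult}_Z\Theta\ge k$ at its generic point.  Blowing up (a resolution of) $Z$, the resulting exceptional divisor $E$ has relative discrepancy $b_E=(g-\ell)-1$ and order $r_E\ge k$ along the pullback of $\Theta$, so its log discrepancy with respect to $(A,\Theta)$ is $b_E+1-r_E\le (g-\ell)-k$.  Log-canonicity forces this to be $\ge 0$, giving $\ell\le g-k$.

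For the log-canonicity itself, I would argue by contradiction via multiplier ideals.  Suppose $c:=\operatorname{lct}(A,\Theta)<1$, so that $\mathcal J:=\mathcal J(A,c\Theta)$ is a proper ideal sheaf with nonempty cosupport $V\subseteq\operatorname{Sing}\Theta$.  Because $(1-c)\Theta$ is ample, Nadel vanishing gives
\[
H^i\bigl(A,\,\mathcal J\otimes\mathcal O_A(\Theta)\bigr)=0\quad\text{for all }i>0,
\]
and combined with $h^0(A,\mathcal O_A(\Theta))=1$ (the defining property of a principal polarization) this constrains the Euler characteristic of the torsion sheaf $\mathcal O_A/\mathcal J$ twisted by $\Theta$ via the short exact sequence $0\to\mathcal J(\Theta)\to\mathcal O_A(\Theta)\to(\mathcal O_A/\mathcal J)(\Theta)\to 0$.

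The main obstacle is that a single application of Nadel vanishing is not strong enough, because the resulting numerical constraint is compatible with a small but nontrivial $V$.  To amplify it, I would exploit the group structure on $A$ through translates: for generic $a_1,\ldots,a_n\in A$, the divisor $D_n:=\Theta_{a_1}+\cdots+\Theta_{a_n}$ is linearly equivalent to $n\Theta$, and subadditivity of multiplier ideals gives $\mathcal J(A,cD_n)\subseteq\prod_i t_{a_i}^*\mathcal J$.  Since $h^0(A,\mathcal O_A(n\Theta))=n^g$ grows polynomially of degree $g$, while the cosupport of the right-hand side is concentrated on the union $\bigcup_i t_{a_i}V$ of dimension $\dim V<g$, running the Nadel vanishing argument with $D_n$ in place of $\Theta$ and letting $n\to\infty$ should produce an asymptotic incompatibility unless $V$ is empty, i.e.\ $c\ge 1$.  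The delicate point is justifying the Bertini-type transversality of generic translates on $A$ and making the asymptotic count genuinely contradictory; this is precisely where the transitive action of $A$ on itself, rather than mere ampleness of $\Theta$, is indispensable.
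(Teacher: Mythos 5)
This theorem is only cited in the survey (it is Koll\'ar's result, reproved by Ein--Lazarsfeld), so the comparison below is with the standard argument in those sources. Your first step --- deducing $\dim \operatorname{Sing}_k\Theta\le g-k$ from log-canonicity by blowing up the generic point of an $\ell$-dimensional component $Z$ of $\operatorname{Sing}_k\Theta$ and computing the log discrepancy $(g-\ell)-\operatorname{ord}_E\Theta\le g-\ell-k\ge 0$ --- is correct and is exactly how the implication is usually drawn. Your second step correctly identifies all the essential ingredients for the log-canonicity itself: the multiplier ideal $\mathcal J=\mathcal J(A,c\Theta)$ at $c=\operatorname{lct}<1$, Nadel vanishing from the ampleness of $(1-c)\Theta$, the fact that $h^0(A,\mathcal O_A(\Theta))=1$, and the indispensability of the translation action of $A$ on itself.

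The gap is in how you combine them. The asymptotic scheme with $D_n=\Theta_{a_1}+\cdots+\Theta_{a_n}$ and subadditivity is not carried to a contradiction: you would need to show that $\chi\bigl(\mathcal O_A(n\Theta)\otimes \mathcal J(A,cD_n)\bigr)$, which Nadel vanishing equates with $h^0$, is simultaneously forced to be close to $n^g$ and bounded away from it, and as written it is not clear which two quantities are incompatible --- the number of conditions imposed by vanishing along $\bigcup_i t_{a_i}V$ and the defect in the Euler characteristic could both grow in a compatible way. The standard argument avoids all of this by using the translates one at a time. For each $x\in A$ the line bundle $\mathcal O_A(\Theta_x)$ is numerically equivalent to $\mathcal O_A(\Theta)$, so $\chi\bigl(\mathcal O_A(\Theta_x)\otimes\mathcal J\bigr)$ is independent of $x$; Nadel vanishing identifies it with $h^0\bigl(\mathcal O_A(\Theta_x)\otimes\mathcal J\bigr)$. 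At $x=0$ this equals $1$: it is at most $h^0(\mathcal O_A(\Theta))=1$, and at least $1$ because $\mathcal J(A,c\Theta)\supseteq\mathcal J(A,\Theta)=\mathcal O_A(-\Theta)$ for $c\le 1$, so the canonical section of $\Theta$ lies in the subspace. Hence for every $x$ the unique section of $\mathcal O_A(\Theta_x)$ lies in $\mathcal J\otimes\mathcal O_A(\Theta_x)$, i.e.\ vanishes on the cosupport $V$ of $\mathcal J$, so $V\subseteq\bigcap_{x\in A}\Theta_x=\emptyset$, contradicting $c<1$. I would replace your asymptotic step with this one-translate-at-a-time argument; the rest of your proposal then goes through.
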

In other words, $N_\ell^k=\emptyset$ if 
$\ell>g-k$.
This theorem was generalized by Ein-Lazarsfeld \cite{el} who show that for a divisor $D\in |m\Theta|$, $\frac{1}{m}D$ is log-canonical, so that 
$\dim \operatorname{Sing}_{mk}D\le g-k$.  In fact Hacon \cite{hacon} showed that if 
$\lfloor \frac{1}{m}D\rfloor =0$ then $\frac{1}{m}D$  is log-terminal, so that in this case, $\dim \operatorname{Sing}_{mk}D<g-k$.
It follows in particular from Koll\'ar's theorem that for a  point $x\in \Theta$, $\operatorname{mult}_x\Theta\le  g$.
In the case that $\operatorname{Sing}_g\Theta\ne \emptyset$, Smith-Varley \cite{svmultg} show that $(A,\Theta)$ is the product of $g$ elliptic curves.  This result was generalized by Ein-Lazarsfeld:

\begin{teo}[Ein-Lazarsfeld \cite{el}]
If $(A,\Theta)$ is indecomposable, then $\Theta$ is normal, and has rational singularities.
In addition,  
$$\dim \operatorname{Sing}_k\Theta=g-k$$
if and only if 
$(A,\Theta)$ is the product of $k$ ppavs.
\end{teo}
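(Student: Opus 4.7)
The plan is to bring in the machinery of multiplier ideals together with the generic vanishing theorems available on abelian varieties. Koll\'ar's log-canonicity theorem, cited just above, already gives $\mathcal{J}(A,(1-\epsilon)\Theta)=\mathcal{O}_A$ for all $0<\epsilon\ll 1$, so the non-klt locus of $(A,\Theta)$ is cut out by an ideal sheaf $\mathcal{J}:=\mathcal{J}(A,\Theta)\subseteq \mathcal{O}_A$. For the first statement---normality and rational singularities in the indecomposable case---it suffices to show that $\mathcal{J}=\mathcal{O}_A$: combined with log-canonicity this gives rational singularities by the standard Koll\'ar-type adjunction arguments, and normality then follows from Serre's criterion applied to a log-resolution.

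To prove $\mathcal{J}=\mathcal{O}_A$ under indecomposability, I would argue by contradiction. Assume $\mathcal{J}\ne\mathcal{O}_A$ and let $Z\subseteq A$ be a component of its cosupport, that is, a non-klt center of $(A,\Theta)$. Nadel vanishing applied to $\mathcal{J}\otimes\mathcal{O}_A(\Theta)$ gives $H^i(A,\mathcal{J}\otimes\mathcal{O}_A(\Theta))=0$ for $i>0$, and since $h^0(\mathcal{O}_A(\Theta))=1$ this severely restricts the geometry of $Z$. The key structural step is to show that the stabilizer $B:=\{a\in A:Z+a=Z\}$ is a positive-dimensional abelian subvariety: one uses that every translate $\Theta+a$ has the same multiplier ideal behaviour, together with the Green-Lazarsfeld generic vanishing theorem applied to numerically trivial line bundles twisting $\mathcal{O}_A(\Theta)$, to force $Z$ to be a union of $B$-orbits with $\dim B>0$. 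Descending to $A/B$ and invoking cohomology-and-base-change for $\mathcal{O}_A(\Theta)$ along the fibres of $A\to A/B$ then produces a polarized splitting $(A,\Theta)\cong (B,\Theta_B)\times(A/B,\Theta_{A/B})$, contradicting indecomposability.

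The second assertion is attacked with the same circle of ideas. One direction is immediate: if $(A,\Theta)=\prod_{i=1}^{k}(A_i,\Theta_i)\times(A',\Theta')$, then $\Theta=\sum_i p_i^*\Theta_i+p'^*\Theta'$, and a point lying on each $p_i^*\Theta_i$ but not on $p'^*\Theta'$ has multiplicity at least $k$, exhibiting a $(g-k)$-dimensional component of $\operatorname{Sing}_k\Theta$. Conversely, assume $\dim\operatorname{Sing}_k\Theta=g-k$, so Koll\'ar's bound is saturated. Applying the translation-invariance step above (but now to a suitable higher multiplier ideal designed to detect the order-$k$ singular locus rather than just the non-klt locus) produces a component $Z$ of the deep singular locus which is a coset of an abelian subvariety $B\subseteq A$ of dimension $g-k$. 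An induction on $k$, using the first statement of the theorem to peel off factors one at a time via the splitting $(A,\Theta)\cong (B,\Theta_B)\times(A/B,\Theta_{A/B})$, then yields the full product decomposition.

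The main obstacle, and the technical heart of this approach, is the translation-invariance step: showing that every non-klt (or more generally, deep log-canonical) center of $(A,\Theta)$ is a translate of an abelian subvariety. This demands a delicate interplay between generic vanishing on $A$, a precise multiplier-ideal computation identifying the cosupport of $\mathcal{J}$, and the deformation theory of $Z$ inside $A$ coming from translating $\Theta$ by elements of $A$. Once this structural result is in place, both halves of the theorem follow by successively peeling off abelian subvariety factors and reducing to the indecomposable case handled in the first part.
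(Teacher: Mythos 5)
First, a remark on context: the survey does not prove this theorem but quotes it from Ein--Lazarsfeld, so your proposal has to be measured against the argument in the cited source. Your toolbox --- multiplier-ideal techniques, vanishing theorems on abelian varieties, translates of $\Theta$, and an induction that peels off factors --- is genuinely the right circle of ideas, and the easy direction of the second assertion is fine. But the two load-bearing steps are set up incorrectly. (i) The reduction ``it suffices to show $\mathcal J(A,\Theta)=\mathcal O_A$'' cannot work: for an \emph{integral} effective divisor $D$ on a smooth variety one has $\mathcal J(X,D)=\mathcal O_X(-D)$ on the nose, so $\mathcal J(A,\Theta)\cong\mathcal O_A(-\Theta)$ is never trivial and the non-klt locus of $(A,\Theta)$ is always all of $\Theta$. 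The object that actually detects normality together with rational singularities of $\Theta$ is the \emph{adjoint} ideal $\operatorname{adj}(A,\Theta)$, which sits in the exact sequence $0\to\mathcal O_A\to\mathcal O_A(\Theta)\otimes\operatorname{adj}\to\nu_*\omega_{\widetilde\Theta}\to 0$ for a resolution $\nu\colon\widetilde\Theta\to\Theta$; the first statement is equivalent to $\operatorname{adj}=\mathcal O_A$. (ii) Nadel vanishing does not apply to $\mathcal J(A,\Theta)\otimes\mathcal O_A(\Theta)$: the hypothesis requires $\Theta-\Theta=0$ to be big, and indeed the asserted conclusion is false, since $\mathcal J(A,\Theta)\otimes\mathcal O_A(\Theta)\cong\mathcal O_A$ and $H^1(A,\mathcal O_A)\neq 0$. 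Retreating to $\mathcal J\bigl(A,(1-\epsilon)\Theta\bigr)$ to restore bigness does not help, because Koll\'ar's theorem says that ideal is trivial and so carries no information. Ein--Lazarsfeld's actual engine is the Green--Lazarsfeld generic vanishing theorem applied to $\omega_{\widetilde\Theta}$ on the resolution (which has maximal Albanese dimension via $\widetilde\Theta\to A$), played off against $h^0(\mathcal O_A(\Theta)\otimes P)=1$ for every $P\in\operatorname{Pic}^0(A)$ and the deformation invariance of $\chi$; this is exactly where irreducibility of $\Theta$ enters.

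The structural step you yourself flag as the technical heart --- that every deep singular locus is a translate of an abelian subvariety $B$ and that this yields a polarized splitting by descent to $A/B$ --- is asserted rather than proved, and the proposed mechanism is suspect: an ample divisor cannot be invariant under translation by a positive-dimensional abelian subvariety, so $\mathcal O_A(\Theta)$ does not descend along $A\to A/B$, and no splitting can be produced this way. In Ein--Lazarsfeld the decomposition comes instead from the classical decomposition theorem ($(A,\Theta)$ decomposes if and only if $\Theta$ is reducible): the inductive step for $\dim\operatorname{Sing}_k\Theta=g-k$ reduces to showing that an \emph{irreducible} theta divisor, already known to be normal with rational singularities by the first part, satisfies $\operatorname{codim}\operatorname{Sing}_k\Theta\ge k+1$ for $k\ge 2$ (for $k=2$ this is just the $R_1$ condition of normality, since a hypersurface is automatically $S_2$). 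As written, then, the proposal identifies the right tools but has genuine gaps at both points where the real work happens.
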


In particular, this implies that if $(A,\Theta)$ is indecomposable, then $$\dim \operatorname{Sing}_k \Theta   <g-k.$$ 
Note also that the theorem generalizes the result of Kempf that the theta divisor of a Jacobian is normal and has rational singularities.  On the other hand, Martens' theorem implies for a Jacobian   the stronger inequality
 $$\dim \operatorname{Sing}_k\Theta_C\le g-2k+1,$$
 with equality holding only for hyperelliptic Jacobians (\ref{eqnmart}).  
We have seen that for a Prym variety $(P,\Xi)$ of dimension $g$, the same inequality holds (Corollary \ref{corprym}).
For $g\le 3$ the two bounds agree.  It was shown by the author that in fact the stronger bound holds for all indecomposable ppavs of dimension $g=4,5$.  This is equivalent to the following theorem.
\begin{teo}[\cite{casaschot}]\label{teocasa5}
Suppose $(A,\Theta)\in \mathcal  A_g$ and $g\le 5$.  If $$\dim \operatorname{Sing}_k\Theta=g-k-\delta,$$
 then $(A,\Theta)$ is the product of $k-\delta$ ppavs, so long as $k-\delta\ge 1$.\end{teo}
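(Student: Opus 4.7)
The plan is to reduce the statement to a single Martens-type estimate for indecomposable ppavs of small dimension, and then bootstrap via the canonical decomposition.  Concretely, I would first establish
\[
(\ast)\qquad \text{for every indecomposable $(A,\Theta)\in \mathcal A_g$ with $g\le 5$ and every $k\ge 1$: } \dim \operatorname{Sing}_k\Theta \le g - 2k + 1.
\]
Granting $(\ast)$, the theorem follows by a purely combinatorial argument.  Writing the canonical decomposition $(A,\Theta)=\prod_{i=1}^m (A_i,\Theta_i)$ into indecomposable factors of dimensions $g_i$, the multiplicity of $\Theta$ at $x=(x_1,\ldots,x_m)$ equals $\sum_{i:\, x_i\in \Theta_i}\operatorname{mult}_{x_i}\Theta_i$, so
\[
\operatorname{Sing}_k\Theta = \bigcup_{\vec m:\ \sum m_i\ge k,\ m_i\ge 0}\ \prod_{i=1}^m V_i(m_i),
\]
where $V_i(0)=A_i$ and $V_i(m_i)=\operatorname{Sing}_{m_i}\Theta_i$ for $m_i\ge 1$.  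Setting $s=|\{i:m_i\ge 1\}|$ and applying $(\ast)$ factorwise,
\[
\dim \prod_{i=1}^m V_i(m_i) \le \sum_{m_i=0} g_i + \sum_{m_i\ge 1}(g_i-2m_i+1) = g + s - 2\sum_i m_i \le g - 2k + m.
\]
Equating this maximum with the hypothesis $\dim \operatorname{Sing}_k\Theta = g-k-\delta$ yields $m\ge k-\delta$, so $(A,\Theta)$ is a product of at least $k-\delta$ indecomposable factors, and in particular a product of $k-\delta$ ppavs after grouping.

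It remains to prove $(\ast)$, which I would verify case by case on $g$.  For $g\le 3$ every ppav is a Jacobian and $(\ast)$ is immediate from Martens' theorem.  For $g=4$, the bound is again Martens for indecomposable Jacobians; an indecomposable non-Jacobian in $\mathcal A_4$ has, by Beauville's identification $N_0=\bar J_4\cup \theta_{\textnormal{null}}$, a singular theta divisor only if it lies in $\theta_{\textnormal{null}}$, in which case $\operatorname{Sing}\Theta$ is a finite set of ordinary double points.  The remaining subcase $g=4,\ k=3$ of $(\ast)$ -- that no indecomposable four-dimensional ppav has a triple point on $\Theta$ -- is itself part of the theorem under consideration, and I would establish it by a direct tangent-cone analysis exploiting Ein--Lazarsfeld's normality of $\Theta$ together with Koll\'ar's log-canonicity at a putative triple point.

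The substantive case is $g=5$.  For $k=2$, $(\ast)$ follows from Ein--Lazarsfeld's strict inequality $\dim \operatorname{Sing}\Theta < g-2$ in the indecomposable setting, giving $\dim \operatorname{Sing}\Theta \le 2$.  For $k\ge 3$, I would invoke Theorem \ref{teocasa3}: it asserts that the indecomposable members of $N_0^3\cap \mathcal A_5$ are precisely the intermediate Jacobians of smooth cubic threefolds (for which $\operatorname{Sing}\Theta$ is a single triple point by Mumford's theorem) together with the hyperelliptic Jacobians of genus five (for which $\operatorname{Sing}_k\Theta = W^{k-1}_4$ is governed by Martens, giving $\dim W^2_4=0$ and $W^3_4=\emptyset$).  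Both families satisfy $(\ast)$ with equality at $k=3$ and empty $\operatorname{Sing}_k\Theta$ for $k\ge 4$.  The principal obstacle is precisely this $g=5,\ k\ge 3$ case: Ein--Lazarsfeld's strict inequality alone only yields $\dim \operatorname{Sing}_3\Theta \le 1$, one short of what is needed, so closing the gap genuinely requires the full classification of $N_0^3\cap \mathcal A_5$ provided by Theorem \ref{teocasa3}.
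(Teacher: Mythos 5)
Your reduction of the theorem to the statement $(\ast)$ --- the Martens-type bound $\dim \operatorname{Sing}_k\Theta \le g-2k+1$ for indecomposable ppavs of dimension at most five --- together with the product bookkeeping is correct, and it matches the paper exactly: the text immediately preceding the theorem says the result ``is equivalent to'' precisely that bound for indecomposable ppavs of dimension $4$ and $5$. The problem is that the paper (and the reference \cite{casaschot} it points to) treats establishing $(\ast)$ as the entire content of the theorem, and this is where your proposal has a genuine gap. The critical case is $g=4$, $k=3$: you must show that no indecomposable four-dimensional ppav has a triple point on its theta divisor (equivalently, that $N_0^3\cap\mathcal A_4=\mathcal A_1\bar J^h_3$, as recorded in Table 1). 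Your proposed tools cannot do this. Koll\'ar's log-canonicity only gives $\operatorname{mult}_x\Theta\le g=4$, so it permits a triple point; and the Ein--Lazarsfeld product criterion only triggers when $\dim\operatorname{Sing}_k\Theta$ attains the extremal value $g-k$, i.e. when $\operatorname{Sing}_3\Theta$ is a \emph{curve} --- it says nothing about an isolated triple point on an indecomposable fourfold. So ``a direct tangent-cone analysis exploiting normality and log-canonicity'' is a declaration of intent, not an argument, and it is precisely the nontrivial step. Note that this case cannot be skipped even if one only cares about $g=5$, since a five-dimensional product may have a four-dimensional indecomposable factor.

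The actual route in \cite{casaschot} is Prym-theoretic: every ppav of dimension at most five lies in the closure of the Prym locus (the Prym map $\mathcal R_{g+1}\to\mathcal A_g$ is dominant for $g\le 5$), the bound of Corollary \ref{corprym}, $\dim\operatorname{Sing}_k\Xi\le(\dim P)-2k+1$, gives $(\ast)$ on the open Prym locus (in particular $\operatorname{Sing}_3\Xi=\emptyset$ for a $4$-dimensional Prym), and a degeneration argument via admissible double covers handles the boundary. Your treatment of $g=5$, $k\ge 3$ by citing Theorem \ref{teocasa3} is legitimate within the survey, but be aware that Theorem \ref{teocasa3} is proved by exactly this Prym machinery, so your route does not avoid it --- it outsources it while leaving the $\mathcal A_4$ analogue unproved. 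A minor further slip: an indecomposable non-Jacobian in $\theta_{\textnormal{null}}\subset\mathcal A_4$ need not have only \emph{ordinary} double points (the paper's locus $\theta_{\textnormal{null}}'$ is nonempty), though this does not affect your $k=2$ dimension count, which follows from Ein--Lazarsfeld alone.
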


In light of this, we  pose the following question.

\begin{que}
Is it  true that for  $(A,\Theta)\in \mathcal A_g$ indecomposable, then $$\dim \operatorname{Sing}_k\Theta\le g-2k+1?$$
\end{que}

 As a consequence, of
 Theorems \ref{teocasa5} and \ref{teocasa3},
 we can describe  the loci $N^k_\ell$ in $\mathcal  A_g$ for $g\le 5$.  We write this out for $g=4,5$ in Tables 1 and 2.  We will use the following notation.
For all $r$-tuples $(i_1,\ldots,i_r)$ such that
$i_1+\ldots +i_r=g$
set $\mathcal  A_{i_1,\ldots,i_r}$ to be the sublocus of $\mathcal A_g$ consisting of products of  $r$ ppavs of dimensions $i_1,\ldots,i_r$ respectively.
Similarly, set $\bar{J}_{i_1,\ldots,i_r}=\bar{J}_g\cap \mathcal  A_{i_1,\ldots,i_r}$, and
$\bar{J}^h_{i_1,\ldots,i_r}=\bar{J}^h_g\cap \mathcal  A_{i_1,\ldots,i_r}$, where $\bar{J}^h_g$ is the closure of the hyperelliptic locus.
We will also use the notation  
$\theta_{\textnormal{null}}^g$ to indicate that the locus is to be considered in $\mathcal A_g$.
\begin{table}\label{tab1}
$$
\begin{array}{c||c|c|c}
4&\mathcal  A_{1,1,1,1}&\emptyset&\emptyset\\
\hline
3&\mathcal  A_1\bar{J}^h_3&\mathcal  A_{1,1,2}&\emptyset\\
\hline
2&\theta_{\textnormal{null}}\cup \bar{J}_4& \bar{J}^h_4\cup \mathcal A_{1,3}&\mathcal  A_{1,3}\cup \mathcal  A_{2,2}\\
\hline
\hline
k/\ell&0&1&2
\end{array}
$$
\caption{The loci $N_\ell^k$ in $\mathcal  A_4$.}
\end{table}

\begin{small}
\begin{table}\label{tab2}
$$
\begin{array}{c||c|c|c|c}
5&\mathcal  A_{1,1,1,1,1}&\emptyset&\emptyset &\emptyset\\
\hline
4&\mathcal  A_{1,1}\bar{J}^h_3&\mathcal  A_{1,1,1,2}&\emptyset &\emptyset\\
\hline
3&\bar{I}\cup \bar{J}_{1,4}\cup \mathcal  A_1\theta^4_{\textnormal{null}}& \bar{J}^h_{1,4}\cup \bar{J}^h_{2,3}\cup \mathcal  A_{1,1,3}&\mathcal  A_{1,1,3}\cup \mathcal  A_{1,2,2} &\emptyset\\
\hline
2&\theta_{\textnormal{null}}\cup N_0'& \bar{J}_5\cup \mathcal A_{1,4}\cup A\cup B\cup C& \bar{J}^h_5\cup \mathcal  A_{1,4}\cup \mathcal  A_{2,3} & \mathcal  A_{1,4}\cup \mathcal  A_{2,3} \\
\hline
\hline
k/\ell&0&1&2&3
\end{array}
$$
\caption{The loci $N_\ell^k$ in $\mathcal  A_5$.}
\end{table}
\end{small}

\subsection{Vanishing theta nulls}
Quadric tangent cones to the theta divisor have been a central feature in the study of ppavs, especially in the geometric Torelli theorems for Jacobians and Pryms.    In \cite{gsm}, Grushevsky and Salvati Manni consider the rank of double points on theta divisors in general.
To state their result, we use the following notation.  Let $\theta_{\textnormal{null}}'$ be the closure of the locus of ppavs in $\theta_{\textnormal{null}}$ whose theta divisor has a unique singularity, which is a double point, but not an ordinary double point.
The fact that $\theta_{\textnormal{null}}'\ne \theta_{\textnormal{null}}$ is due to Debarre (cf. Grushevsky-Salvati Manni \cite{gsm}).  The following theorem gives information about the intersection of the two irreducible divisors in the support of $N_0$.

\begin{teo}[Grushevsky-Salvati Manni \cite{gsm,gsm0}] In the notation above
$\theta_{\textnormal{null}}'\subseteq \theta_{\textnormal{null}}\cap N_0'$.
Moreover, in $\mathcal  A_4$ 
this is an equality.
\end{teo}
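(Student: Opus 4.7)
The plan is to prove the two inclusions separately. The containment $\theta_{\textnormal{null}}'\subseteq \theta_{\textnormal{null}}$ is immediate from the definition, so the content of the first statement lies in $\theta_{\textnormal{null}}'\subseteq N_0'$. For the equality in $\mathcal A_4$, the plan is to first identify $N_0'$ with $\bar J_4$ set-theoretically by comparing Beauville's identity $N_0=\bar J_4\cup\theta_{\textnormal{null}}$ with Debarre's formula $[N_0]=[\theta_{\textnormal{null}}]+2[N_0']$, and then to study those Jacobians in genus four that carry a vanishing theta null.

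For the inclusion $\theta_{\textnormal{null}}'\subseteq N_0'$, my approach is via the universal critical locus $\mathrm{Crit}$ inside the universal abelian variety $\mathcal X\to \mathcal A_g$, consisting of pairs $((A,\Theta),x)$ with $\theta(x)=0$ and $\nabla\theta(x)=0$; its pushforward to $\mathcal A_g$ is (up to scheme structure) the divisor $N_0$, and the factor of two in Debarre's formula reflects that singularities of $\Theta$ occur in symmetric pairs $\{x,-x\}$ while a two-torsion singularity is fixed by the involution. At a generic point of $\theta_{\textnormal{null}}$ the double point is ordinary, the Hessian is invertible, and $\mathrm{Crit}\to \mathcal A_g$ is unramified, contributing multiplicity one to $[N_0]$. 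At a generic point $(A,\Theta)$ of $\theta_{\textnormal{null}}'$, however, the Hessian of $\theta$ at the two-torsion singularity has rank strictly less than $g$, and a direct local computation shows that the pushforward multiplicity of $\mathrm{Crit}$ then exceeds one. Since $\theta_{\textnormal{null}}$ is reduced and contributes multiplicity one uniformly, the excess multiplicity can only be accounted for by membership in $N_0'$. An equivalent deformation-theoretic version: a generic transversal deformation of $(A,\Theta)$ off $\theta_{\textnormal{null}}$ destroys the two-torsion singularity, but the kernel of the Hessian forces a new symmetric pair $\{x_t,-x_t\}$ of ordinary singularities to appear on the deformed theta divisor, placing the nearby ppavs on the generic stratum of $N_0'$ and hence $(A,\Theta)\in \overline{N_0'}=N_0'$.

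For equality in $\mathcal A_4$, comparing Beauville and Debarre as above yields $N_0'=\bar J_4$, so I must show that every Jacobian $(JC,\Theta_C)\in \mathcal A_4$ with a vanishing theta null lies in $\theta_{\textnormal{null}}'$. For non-hyperelliptic $C$, I would use the classical description of the $g^1_3$'s via the unique quadric $Q$ containing the canonical model of $C$ in $\mathbb P^3$: when $Q$ is smooth, the two $g^1_3$'s $L_1,L_2$ satisfy $L_1\otimes L_2\cong K_C$, so neither can be a theta characteristic; when $Q$ is a cone, the unique $g^1_3$ $L$ satisfies $L^{\otimes 2}\cong K_C$. Hence a vanishing theta null in genus four forces $Q$ to be a cone, $W^1_3(C)=\{[L]\}$ consists of a single point, and by the Riemann Singularity Theorem $\operatorname{Sing}\Theta_C=\{[L]\}$ is a single double point whose tangent cone is, by Remark 1.2, a rank-three quadric, i.e.\ a non-ordinary double point. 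Thus the open non-hyperelliptic stratum of $\bar J_4\cap \theta_{\textnormal{null}}$ lies in the locus whose closure defines $\theta_{\textnormal{null}}'$, and the remaining points (hyperelliptic Jacobians and boundary ppavs in $\bar J_4$) lie in $\theta_{\textnormal{null}}'$ by taking limits within $\bar J_4$.

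The main obstacle is the local multiplicity computation in part one: one must verify rigorously that the jump in pushforward multiplicity of $\mathrm{Crit}$ at a non-ordinary double point is accounted for entirely by $N_0'$, and does not, for instance, contribute extra multiplicity along $\theta_{\textnormal{null}}$ itself. This ultimately rests on a careful infinitesimal/versality analysis of the map $((A,\Theta),x)\mapsto \nabla\theta(x)$ near such a point. A secondary technical issue is to handle the boundary of $\bar J_4\cap\theta_{\textnormal{null}}$ in $\mathcal A_4$ (hyperelliptic Jacobians and decomposable ppavs), showing that these points lie in the closure of the non-hyperelliptic stratum analyzed above rather than producing genuinely new components.
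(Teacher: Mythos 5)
This statement is quoted from Grushevsky--Salvati Manni; the survey gives no proof, saying only that the argument is ``via a detailed analysis of the theta function,'' and separately remarks that the dimension-four case might admit a proof through Prym/Jacobian geometry. Your proposal splits along exactly those lines. The $\mathcal A_4$ half is essentially complete and is, in effect, the alternative route the survey alludes to: comparing Beauville's $N_0=\bar J_4\cup\theta_{\textnormal{null}}$ with Debarre's $N_0=\theta_{\textnormal{null}}+2N_0'$ does give $N_0'=\bar J_4$ set-theoretically, and the smooth-quadric/cone dichotomy for the canonical model of a non-hyperelliptic genus-four curve, combined with the rank-three statement in the Remark following Kempf's theorem, puts every non-hyperelliptic Jacobian with a vanishing theta null into the defining open locus of $\theta_{\textnormal{null}}'$. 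To finish that half cleanly you should add: (i) the theta-null divisor in $\mathscr M_4$ is irreducible (Teixidor i Bigas), so the non-hyperelliptic stratum is dense in $J_4\cap\theta_{\textnormal{null}}$; and (ii) every component of $\bar J_4\cap\theta_{\textnormal{null}}$ has dimension at least $8$ while $\bar J_4\setminus J_4$ has dimension at most $7$, so no component hides in the boundary. With those two remarks the equality in $\mathcal A_4$ follows from the general containment.

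The genuine gap is in the general containment $\theta_{\textnormal{null}}'\subseteq N_0'$. Your entire argument funnels through the claim that at a point of $\theta_{\textnormal{null}}$ where the singularity at the even two-torsion point is a non-ordinary double point, the local equation of $N_0$ --- in the scheme structure for which Debarre's identity holds --- vanishes to order at least two, while $\theta_{\textnormal{null}}$ stays reduced (indeed smooth) at the generic such point. You flag this but do not prove it, and it is not a formality on two counts. First, the ``discriminant multiplicity jumps at worse-than-nodal singularities'' heuristic presupposes enough versality of the family of theta divisors over $\mathcal A_g$ at the point in question; verifying this, e.g.\ by using the heat equation to trade $\tau$-derivatives of $\theta_m(\tau,0)$ for the Hessian in $z$ and showing that the restriction of the equation of $N_0'$ to $\theta_{\textnormal{null}}$ is governed by $\det\bigl(\partial^2\theta_m/\partial z_i\partial z_j\bigr)$, is precisely the ``detailed analysis of the theta function'' that constitutes the cited proof. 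Second, your bookkeeping that ``the excess multiplicity can only be accounted for by $N_0'$'' silently assumes $\operatorname{mult}\theta_{\textnormal{null}}=1$ at the relevant points; if $\theta_{\textnormal{null}}$ were singular along a component of $\theta_{\textnormal{null}}'$, the excess could be absorbed there instead. Until these are settled, the first assertion --- and with it the forward inclusion needed for the $\mathcal A_4$ equality --- remains unproved in your write-up.
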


The proof is via a detailed analysis of the theta function, and actually provides more information about the intersection $\theta_{\textnormal{null}}\cap N_0'$.  We refer the reader to the paper for more details.   It seems that a proof of the special case of the theorem in dimension four may also be possible via Prym varieties.  The main difficulty remains to find the rank of a quadric tangent cone at an RST-exceptional double point (cf. Question \ref{queprym}).

\section{Singularities of generalized theta divisors}

The techniques used to study the theta divisors of ppavs can also be used to study the singularities of generalized theta divisors on moduli spaces of stable vector bundles on curves.
Recall that given a smooth curve $C$ of genus $g\ge 2$, there are moduli spaces $\mathscr U(k,d)$ of stable vector bundles of rank $k$ and degree $d$ on $C$, and  subvarieties 
$W^r_{\mathscr U(k,d)}$, the so called generalized Brill-Noether loci, parameterizing those vector bundles whose space of global sections is at least $r+1$-dimensional; i.e. as a set
$$
W^r_{\mathscr U(k,d)}=\{
E\in \mathscr U(k,d): h^0(E)\ge r+1\}.
$$

A great deal of work has been done studying the geometry of these varieties, especially determining when they are non-empty, and, since they can be described as degeneracy loci, when they have the expected dimension.   For $k\ge 2$, these loci are still not well understood; we refer the reader to Bradlow et al. \cite{newstead} for some recent results, and a reference to the literature on this subject. 
In what follows we will consider the singularities of these loci.
Note that in the case $k=1$, $d=g-1$, and $r=0$, we recover the theta divisor $\Theta\subset \operatorname{\operatorname{Pic}}^{g-1}(C)$.
More generally, when $d=k(g-1)$ and $r=0$, we recover the so-called generalized theta divisors 
$\Theta_k\subset \mathscr U(k,k(g-1))$, whose singularities were studied by Laszlo.

\begin{teo}[Laszlo \cite{l}]
For $E\in \Theta_k
\subset \mathscr U(k,k(g-1))$, 
$$\operatorname{mult}_E\Theta_k=h^0(E).$$
\end{teo}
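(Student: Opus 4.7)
The plan is to follow the strategy of Kempf's proof of the Riemann Singularity Theorem, adapted to the rank-$k$ setting by realising $\Theta_k$ locally as a determinantal divisor. The key intuition is that the multiplicity of a determinantal divisor at a point is controlled by the corank of the defining matrix there, together with the non-degeneracy of its first-order deformation.

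First, I would establish a local determinantal presentation of $\Theta_k$ near $E$. Choose an effective divisor $D$ on $C$ of sufficiently large degree that $H^1(E'(D)) = 0$ for every $E'$ in an analytic neighborhood $U$ of $E$ in $\mathscr{U}(k, k(g-1))$. The short exact sequence
$$
0 \to E' \to E'(D) \to E'(D)|_D \to 0
$$
produces a homomorphism $M(E') \colon H^0(E'(D)) \to H^0(E'(D)|_D)$ between spaces of equal dimension $N := k \deg D$ (since $\chi(E') = 0$), with $\ker M(E') = H^0(E')$ and $\operatorname{coker} M(E') = H^1(E')$. Trivialising both sides over $U$, I obtain a square matrix $M$ of holomorphic functions with $\Theta_k = \{\det M = 0\}$ near $E$, so $\operatorname{mult}_E \Theta_k = \operatorname{ord}_E \det M$.

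Setting $r := h^0(E)$, the lower bound $\operatorname{ord}_E \det M \geq r$ is linear algebra. Choosing bases adapted to $\ker M(E)$ and $\operatorname{coker} M(E)$ puts $M$ in the block form $\begin{pmatrix} A & B \\ C & D \end{pmatrix}$ with $A$ invertible near $E$ and $B, C, D$ having entries in the maximal ideal $\mathfrak{m}_E$; the Schur complement formula $\det M = \det A \cdot \det(D - C A^{-1} B)$ then shows $\det M \in \mathfrak{m}_E^r$. Furthermore, since $C A^{-1} B \in \mathfrak{m}_E^2$, the degree-$r$ initial form of $\det M$ equals $\det \dot D$, where $\dot D$ is the linear part of the block $D$ at $E$. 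A standard deformation-theoretic computation identifies $\dot D$, viewed as a function of $v \in T_E \mathscr{U} = H^1(\operatorname{End}(E))$, with the cup-product map $\cup v \colon H^0(E) \to H^1(E)$, so that the tangent cone $C_E \Theta_k$ is the zero locus of
$$
P(v) = \det\bigl(\cup v \colon H^0(E) \to H^1(E)\bigr),
$$
a homogeneous polynomial of degree $r$ on $T_E \mathscr{U}$. Thus $\operatorname{mult}_E \Theta_k = r$ precisely when $P \not\equiv 0$.

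The hard part is to verify $P \not\equiv 0$, equivalently to produce a single $v \in H^1(\operatorname{End}(E))$ for which $\cup v$ is an isomorphism. My plan would be to pick a smooth arc $E_t$ from $E$ into the nonempty complement $\mathscr{U} \setminus \Theta_k$ and analyse the Smith normal form of $M(E_t)$ over $\mathbb{C}[[t]]$: the rank drop at $t = 0$ forces exactly $r$ invariant factors to be divisible by $t$, and those factors all equal $t$ exactly when the tangent vector of the arc makes $\cup v$ an isomorphism. The subtlety is that the condition $h^0(E_t) = 0$ for $t \neq 0$ does not by itself force $\cup v$ to have trivial kernel --- it forces only that any element of $\ker(\cup v)$ fail to extend to a formal section over $\mathbb{C}[[t]]$ --- so one must combine this with the reducedness of $\Theta_k$ along its generic points and an analysis of how the arc intersects the singular strata $\operatorname{Sing}_s \Theta_k$ in order to extract a $v$ of the required type. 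This is the main technical point, and is essentially the content of Laszlo's argument.
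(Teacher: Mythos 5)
The paper itself contains no proof of this statement: it is a survey item cited directly from Laszlo's article, so there is nothing internal to compare your argument against. Judged on its own terms, your proposal correctly assembles the standard framework: the local determinantal presentation of $\Theta_k$ via a twist by a divisor $D$ of large degree, the lower bound $\operatorname{mult}_E\Theta_k\ge h^0(E)$ from the Schur complement, and the identification of the degree-$r$ initial form of $\det M$ with the polynomial $P(v)=\det\bigl(\cup v\colon H^0(E)\to H^1(E)\bigr)$ on $T_E\mathscr U=H^1(\operatorname{End}(E))$. All of that is routine and correct, and it is the same reduction Laszlo performs.

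The genuine gap is the step you yourself flag: proving $P\not\equiv 0$, i.e., exhibiting a single $v$ for which $\cup v$ is an isomorphism. This is not a technical footnote to be deferred --- it is the entire content of the theorem beyond formal linear algebra, and your arc/Smith-normal-form strategy does not close it for exactly the reason you identify: an arc leaving $\Theta_k$ only yields $\operatorname{ord}_t\det M(E_t)<\infty$, hence invariant factors $t^{a_1},\dots,t^{a_r}$ with each $a_i\ge 1$ and $\sum a_i$ finite, not $a_i=1$ for all $i$; and reducedness of $\Theta_k$ at its generic points controls only the multiplicity-one stratum, with no mechanism offered for propagating equality to the deeper strata where $h^0(E)\ge 2$. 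Note also that your argument nowhere uses any property of $E$ beyond $\chi(E)=0$: by Serre duality, $P\not\equiv 0$ amounts to saying that the multiplication map $H^0(E)\otimes H^0(K_C\otimes E^\vee)\to H^0(K_C\otimes\operatorname{End}(E))$ admits a linear functional whose induced pairing is nondegenerate, and the absence of a common kernel vector (which does follow formally) is not sufficient for this; the last theorem quoted in this same section of the survey, where $\operatorname{mult}_\xi\Theta_F$ can strictly exceed $h^0(F\otimes\xi)$ once the admissible deformations are restricted, illustrates that the determinantal setup alone does not force the equality. So your proposal is an accurate reduction of Laszlo's theorem to its key lemma, but the key lemma is asserted rather than proven.
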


\begin{rem}
A similar result was obtained by Laszlo \cite{l} for the restriction of this theta divisor to $S\mathscr U(k,L)$, the moduli of vector bundles of rank $k$ and fixed determinant $L\in \operatorname{\operatorname{Pic}}^{k(g-1)}(C)$.
\end{rem}

\begin{rem}
Recently Hitching \cite[Theorem 14]{hitch} has given a set theoretic geometric description of the tangent cone $C_E\Theta_k$ similar to that given by Kempf for theta divisors on Jacobians.
\end{rem}

More generally, given a vector bundle $F\in \mathscr U(k',d')$, one can consider the subvarieties $W^r_F \subseteq \mathscr U(k,d)$ parameterizing those vector bundles which, when tensored by $F$, have at least $r+1$ linearly independent global sections; i.e. as a set
$$
W^r_F=\{E\in \mathscr U(k,d):h^0(E\otimes F)\ge r+1\}.
$$

When $r=0$, and $k'd+kd'=kk'(g-1)$, these are expected to be divisors, and we use the notation $\Theta_F$.
Laszlo proved a partial result in a special case:

\begin{teo}[Laszlo \cite{l}]
Suppose that $r=0$, $k=1$, $d=0$, $k'=2$ and $d'=2g-2$.
In other words, $F\in \mathscr U(2,2g-2)$, and there is an induced map
$
\operatorname{\operatorname{Pic}}^0(C)\stackrel{\otimes F}{\to} \mathscr U(2,2g-2)
$,
so that pulling back the canonical theta divisor $\Theta_2$ on $\mathscr U(2,2(g-1))$, one obtains $\Theta_F\in |2\Theta|$ on $\operatorname{\operatorname{Pic}}^0(C)$.  

For $\xi$ in $\operatorname{\operatorname{Pic}}^0(C)$, if $h^0(F\otimes \xi)\le 2$, then $\operatorname{mult}_\xi\Theta_F>h^0(F\otimes \xi)$ if and only if there exists an extension of line bundles
$$
0\to A\to F\otimes \xi \to B \to 0
$$
such that $h^0(A)+h^1(B)>h^0(E\otimes \xi)$. 
Moreover, for any curve of genus at least two, there exist vector bundles $F\in \mathscr U(2,2g-2)$, and line bundles $\xi\in \operatorname{\operatorname{Pic}}^0(C)$ such that $\operatorname{mult}_\xi\Theta_F>h^0(F\otimes \xi)$.
\end{teo}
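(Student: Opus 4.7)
The plan is to realize $\Theta_F=(\otimes F)^*\Theta_2$ as a pullback and combine Laszlo's multiplicity theorem for $\Theta_2$ with the Kempf-style determinantal description of its tangent cone. Since $\operatorname{mult}_{F\otimes\xi}\Theta_2=h^0(F\otimes\xi)$, pullback automatically gives $\operatorname{mult}_\xi\Theta_F\ge h^0(F\otimes\xi)$, with strict inequality precisely when the image of
\[
d(\otimes F)_\xi\colon H^1(\mathscr O_C)\longrightarrow H^1(\operatorname{End}(F\otimes\xi)),\qquad\alpha\longmapsto\alpha\otimes\operatorname{id},
\]
is contained in the tangent cone $C_{F\otimes\xi}\Theta_2$. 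Choosing bases $\{s_i\}\subset H^0(F\otimes\xi)$ and $\{t_j\}\subset H^0(K_C\otimes(F\otimes\xi)^\vee)$ of common dimension $r:=h^0(F\otimes\xi)$ (both equal since $\chi(F\otimes\xi)=0$), the tangent cone is cut out by the determinant of the cup-product matrix whose $(i,j)$ entry, after applying the Serre pairing, reduces to $\operatorname{tr}(s_it_j)\cup\,\cdot\,$. Setting $\phi_{ij}:=\operatorname{tr}(s_it_j)\in H^0(K_C)$ and restricting to $\beta=\alpha\otimes\operatorname{id}$, the containment condition becomes that $\det(\phi_{ij}\cup\alpha)$ vanishes identically as a polynomial on $H^1(\mathscr O_C)$, equivalently that the linear map $\Phi\colon H^1(\mathscr O_C)\to M_r(\mathbb C)$, $\alpha\mapsto(\phi_{ij}\cup\alpha)$, has image in the rank-$\le(r-1)$ locus.

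For the ``if'' direction, given $0\to A\to F\otimes\xi\to B\to 0$, observe that the trace pairing $\operatorname{tr}(s\cdot t)$ vanishes identically on $H^0(A)\times H^0(K_C\otimes B^\vee)$, because any $t\in H^0(K_C\otimes B^\vee)$ factors through $B$ and so annihilates $A$ pointwise. Completing to bases yields an $h^0(A)\times h^1(B)$ zero block in $(\phi_{ij})$; when $h^0(A)+h^1(B)>r$, the Frobenius--K\"onig theorem forces $\det(\phi_{ij}\cup\alpha)\equiv 0$, giving strict multiplicity. For the converse with $r\le 2$, I would extract the sub-line bundle from the image of $\Phi$. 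If $r=1$, then $\phi_{11}=0$ says the map $t_1\circ s_1\colon\mathscr O_C\to K_C$ vanishes, so $t_1$ annihilates the saturated sub $A\subset F\otimes\xi$ generated by $s_1$ and factors through $B$, yielding $h^0(A)\ge 1$ and $h^1(B)\ge 1$. If $r=2$, the image of $\Phi$ is a linear subspace of $\{\det=0\}\subset M_2(\mathbb C)$; since the Segre quadric $\mathbb P^1\times\mathbb P^1\subset\mathbb P^3$ contains no $2$-plane, this subspace lies in one of its two rulings. In the common-kernel ruling there is $v\ne 0$ with $\sum_j v_j\phi_{ij}=0$ for all $i$, so $t:=\sum v_jt_j$ annihilates both $s_1,s_2$; thus $s_1,s_2$ generate a rank-one subsheaf whose saturation $A$ has $h^0(A)=2$, and $t\in H^1(B)^\vee$ gives $h^1(B)\ge 1$. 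The common-image ruling is dual, producing a section $s=\sum w_is_i$ annihilated by all of $H^0(K_C\otimes(F\otimes\xi)^\vee)$ and hence a sub $A$ generated by $s$ with $h^0(A)\ge 1$ and $h^1(B)\ge 2$. In either case $h^0(A)+h^1(B)\ge 3>r=2$. I expect the main care required here is a sheaf-cohomological bookkeeping step, namely verifying that the passage to the saturation $A$ preserves the cohomological identifications (trace pairing, Serre duality) used to produce the bound, rather than a conceptual obstacle.

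For the ``moreover'' clause, fix $g\ge 2$ and take a sufficiently general nonzero class $\epsilon\in\operatorname{Ext}^1(K_C,\mathscr O_C)=H^1(K_C^{-1})$, which is nonzero since $h^1(K_C^{-1})=h^0(K_C^{\otimes 2})=3g-3\ge 3$, and form $0\to\mathscr O_C\to F\to K_C\to 0$. Stability is checked by a dimension count: any destabilizing sub-line bundle of $F$ must be of the form $K_C(-D)$ with $\deg D\le g-1$ that lifts to $F$, and the locus of $\epsilon$ along which some such $K_C(-D)$ lifts is the union, over effective $D$ of degree at most $g-1$, of the kernels of $H^1(K_C^{-1})\to H^1(K_C^{-1}(D))$. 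Since a general $D$ of degree $k$ contributes a kernel of dimension $k$, fibered over a $k$-dimensional family, this union has dimension at most $2(g-1)<3g-3$ for $g\ge 2$, so the generic $\epsilon$ avoids it and $F$ is stable. For such $\epsilon$ the connecting map $H^0(K_C)\to H^1(\mathscr O_C)$ is injective, so $h^0(F)=1$. Taking $\xi=\mathscr O_C$, the extension itself exhibits $A=\mathscr O_C$, $B=K_C$ with $h^0(A)+h^1(B)=2>1=h^0(F)$, and the ``if'' direction then yields $\operatorname{mult}_{\mathscr O_C}\Theta_F>h^0(F)$, as required.
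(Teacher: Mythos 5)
The paper itself offers no proof of this statement---it is a survey, and the theorem is quoted directly from Laszlo's paper---so there is no internal argument to compare against; I can only judge your proposal on its own merits. On those terms it is essentially correct, and it follows what is surely the intended route: reduce strictness of the inequality $\operatorname{mult}_\xi\Theta_F\ge h^0(F\otimes\xi)$ to containment of the image of $d(\otimes F)_\xi$ in $C_{F\otimes\xi}\Theta_2$, identify the restricted leading form with $\det(\phi_{ij}\cup\alpha)$ where $\phi_{ij}=t_j\circ s_i\in H^0(K_C)$, and convert identical vanishing of that determinant into the existence of the sub-line bundle $A$. The zero-block (Frobenius--K\"onig) argument for the ``if'' direction is sound and, as you implicitly note, needs no hypothesis on $r=h^0(F\otimes\xi)$, consistent with the asymmetry in the statement; the two-rulings analysis of a linear subspace of $\{\det=0\}\subset M_2(\mathbb C)$ correctly handles the converse for $r\le 2$, including the degenerate subcases, since a nonzero $t$ killing both $s_i$ has rank-one kernel and the saturations behave as you claim.

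Two points deserve tightening. First, in the ``moreover'' step you assert that for the $\epsilon$ chosen generically for stability the coboundary $H^0(K_C)\to H^1(\mathscr O_C)$ is injective; this is a \emph{separate} genericity condition, not a consequence of stability. It does hold generically---the locus of bad $\epsilon$ is the union over $[\omega]\in\mathbb P H^0(K_C)$ of the annihilators of $\omega\cdot H^0(K_C)$ inside $H^1(K_C^{-1})\cong H^0(K_C^{\otimes 2})^\vee$, which has dimension at most $(g-1)+(2g-3)=3g-4<3g-3$---but you should run this count and observe that the two open conditions can be met simultaneously. Second, your identification of the degree-$r$ part of the local equation of $\Theta_2$ at $F\otimes\xi$ with the cup-product determinant presupposes Laszlo's local determinantal description of $\Theta_2$ together with his theorem that this determinant is not identically zero (so that it really is the leading form); this is a legitimate citation, but it is the one place where your argument leans on the quoted multiplicity theorem rather than being self-contained, and it also silently uses that $\Theta_F$ is a genuine divisor, which is part of the setup being assumed.
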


We point out that in contrast to the last statement of the theorem, a result of Teixidor i Bigas \cite{tb} shows that for a general stable rank $2$ degree $2(g-1)$ vector bundle $F$, $\operatorname{mult}_\xi \Theta_F=h^0(F\otimes \xi)$ for all $\xi \in \operatorname{\operatorname{Pic}}^0(C)$.
It seems likely that one can extend the results  cited above.  This is work in progress, joint with Montserrat 
Teixidor i Bigas.  We end by posing some basic questions to which we hope to give partial answers.
\begin{que}Fix $k,d$.  Suppose that 
 $F\in \mathscr U(k',d')$,  $kd'+k'd=kk'(g-1)$, and 
 $\Theta_F=W^0_F$ is a divisor.
When is it true that $\operatorname{mult}_E\Theta_F=h^0(E\otimes F)$?  In the case that
$\operatorname{mult}_E\Theta_F>h^0(E\otimes F)$,  what is the precise multiplicity?
\end{que}

\bibliography{athens}
\end{document}